\documentclass[a4paper]{article}
\usepackage[authoryear,round]{natbib}
\usepackage{main}
\usepackage[left=1.in, right=1.in, bottom=1.5in, top=1.5in]{geometry}
\setcitestyle{authoryear,round,citesep={;},aysep={,},yysep={;}}
\let\cite\citep

\title{Beyond Polytopes: Damped Newton Frank--Wolfe Methods over Compact Convex Sets}
\author{Shota Takahashi\thanks{
Graduate School of Information Science and Technology, The University of Tokyo, Tokyo, Japan (\href{mailto:shota@mist.i.u-tokyo.ac.jp}{\texttt{shota@mist.i.u-tokyo.ac.jp}})}
\and
Akiko Takeda\thanks{
Graduate School of Information Science and Technology, The University of Tokyo, Tokyo, Japan (\href{mailto:takeda@mist.i.u-tokyo.ac.jp}{\texttt{takeda@mist.i.u-tokyo.ac.jp}})}~\thanks{Center for Advanced Intelligence Project, RIKEN, Tokyo, Japan}}
\date{\today}

\begin{document}

\maketitle

\begin{abstract}
    We study convex optimization over compact convex sets for twice differentiable objective functions with positive definite Hessians, assuming access to the feasible set through a linear minimization oracle.
    Existing second-order Frank--Wolfe methods either provide only a local linear rate over general convex sets or achieve global linear and local quadratic convergence only over polytopes.
    We propose \emph{damped Newton Frank--Wolfe methods} that approximately solve constrained damped Newton subproblems using Frank--Wolfe or away-step Frank--Wolfe inner iterations.
    We develop residual backtracking, using a root Newton stepsize as a safe reference, together with a switching rule that eventually takes full steps.
    Under H\"older smoothness assumptions for $p$th derivatives, we establish global linear convergence and local convergence with Q-order at least $1+\nu$ ($p=2$) and local quadratic convergence ($p=3$).
    Experiments on matrix sensing and ridge-regularized logistic regression show that the residual-backtracking variant is consistently robust and often achieves the best performance among the tested first- and second-order baselines.
\end{abstract}

\section{Introduction}
In this paper, we focus on the constrained convex optimization problem of the form
\begin{align}
    \min_{x \in \X} \quad f(x) \label{prob:constrained}
\end{align}
where $f:\R^n \to \R$ is a twice differentiable convex function whose Hessian $\nabla^2 f(x)$ is positive definite for all $x \in \X$, and $\X \subset \R^n$ is a compact convex set.
Let $x^\star$ denote a minimizer of~\eqref{prob:constrained}.
We assume access to a linear minimization oracle (LMO) for $\X$, \ie, given $c\in\R^n$, we can compute $v \in \argmin_{u\in\X}\langle c,u\rangle$.
The \emph{Frank--Wolfe method} (FW) is a projection-free method that requires only LMO calls to solve~\eqref{prob:constrained} rather than projection oracles.
A projection oracle can be computationally expensive for complex constraint sets, whereas an LMO is often more efficient as shown in \citep{Combettes2021-ct,Braun2025-sz,Woodstock2026-mz}.
From this fact, the FW method and LMO have been widely used in machine learning optimizers~\citep{Sfyraki2026-mv} and signal processing~\citep{Odor2016-sn,Takahashi2026-ch} as well as in optimization~\citep{Braun2025-sz}.

\paragraph{Related Work.}
The FW method was originally proposed by~\citet{Frank1956-cq} and was independently developed and extended as the \emph{conditional gradient method} (CG) by~\citet{Levitin1966-sm}.
The first lower bounds on the convergence rate of the FW method were established by~\citet{Canon1968-sr} and later refined by~\citet{GueLat1986-bg}.
Subsequent work sharpened the convergence analysis and clarified the limitations of FW methods.
In particular,~\citet{Jaggi2013-oz} refined the convergence theory and derived a lower bound relating iterate sparsity to solution accuracy.
A closely related lower bound was obtained by~\citet{Lan2013-dv} through an oracle-complexity analysis of methods based on linear optimization.
Faster convergence rates can be obtained under additional geometric assumptions on the feasible set. 
Both a faster sublinear rate and a linear convergence rate over strongly convex sets were derived by~\citet{Garber2015-de}, and the analysis was extended to uniformly convex sets by~\citet{Kerdreux2021-kc}.
On polytopes, however, the classical FW method can zigzag near an optimal face, leading to slow convergence.
The \emph{away-step FW method} was proposed by~\citet{Wolfe1970-ax} to mitigate zigzagging, and its linear convergence was established by~\citet{Lacoste-Julien2015-gb}.
\citet{Pedregosa2020-ay} proposed a backtracking line-search strategy, and~\citet{Takahashi2026-ch} extended this strategy to the Bregman setting.
For a comprehensive review of FW methods, we refer the interested reader to the survey by~\citet{Braun2025-sz}.

\paragraph{Limitations of Existing Methods.}

Even for strongly convex objective functions, the classical FW method can exhibit only sublinear convergence without additional geometric assumptions on $\X$. Thus, unlike projected gradient methods, it does not generally admit a linear convergence guarantee.

The \emph{conditional gradient sliding method} (CGS) was proposed by~\citet{Lan2016-op} to reduce the number of gradient evaluations for smooth convex problems while retaining the optimal LMO complexity.
It incorporates Nesterov's acceleration and approximately solves the projection subproblems using multiple LMO calls, allowing each gradient evaluation to be reused across several inner iterations (see~\cref{appendix:additional-related-work} for acceleration and projection methods).
For strongly convex objectives, CGS achieves a linear rate in terms of gradient evaluations without additional geometric assumptions on $\X$.
More recently, line-search-free adaptive stepsizes were incorporated into CGS by~\citet{Takahashi2026-tp}, removing the need for prior knowledge of a global smoothness constant.

Second-order FW methods have also been developed to obtain faster convergence.
The \emph{Newton FW method} (NFW), proposed by~\citet{Liu2022-gs}, uses second-order information to achieve local linear convergence for constrained self-concordant problems (see~\cref{appendix:additional-related-work} for second-order methods).
NFW is globally convergent, but its linear rate is guaranteed only in the full-step phase.
The \emph{second-order conditional gradient sliding method} (SOCGS), proposed by~\citet{Carderera2026-pk}, approximately solves constrained Newton subproblems using an away-step FW method and achieves global linear convergence and local quadratic convergence for strongly convex objectives over polytopes.
These guarantees are restricted to polytopes, with strict complementarity required for the local quadratic rate.
Setting the subproblem accuracy also requires a lower bound on the primal gap $f(x_k) - f(x^\star)$ at the $k$th iteration.
Then the following question naturally arises:

\begin{quote}
    \emph{Can a projection-free Newton method combine global linear convergence with local quadratic convergence over compact convex sets?}
\end{quote}

\paragraph{Contributions.}

We answer the above question in the affirmative by developing new Newton FW methods for general compact convex sets.
Our main contributions are summarized as follows.

\begin{itemize}
    \item \textbf{Algorithmic framework.}
    We propose a damped Newton FW method (\cref{alg:dnfw}) and a variant with an away-step FW inner loop (\cref{alg:dnfw_away}).
    Unlike existing second-order FW methods~\citep{Liu2022-gs,Carderera2026-pk}, our methods are based on damped constrained Newton subproblems.
    \item \textbf{Stepsize and switching strategies.}
    We develop residual backtracking for the damped Newton FW method.
    Since the underlying Newton subproblems are constrained, their design and analysis differ from those for unconstrained Newton methods.
    We also introduce a switching rule that eventually sets $\alpha_k=1$, thereby enabling local quadratic convergence.
    \item \textbf{Convergence guarantees.}
    We establish global linear convergence over compact convex sets without additional geometric assumptions.
    Switching yields local convergence with Q-order at least $1+\nu$ when $p=2$ and $\nu > 0$, and local quadratic convergence when $p=3$.
    Moreover, the inner accuracy does not require a lower bound on the primal gap.
    \item \textbf{Numerical performance.}
    Our method is consistently robust and often outperforms the tested first- and second-order baselines.
\end{itemize}

\cref{tab:convergence_rates} compares the convergence rates of the proposed method with those of existing FW methods.
\cref{tab:total_lmo_complexity} in \cref{appendix:total_lmo_complexity} shows the comparison of the total LMO complexity.

\begin{table}[!tbp]
    \caption{Global and local complexities (measured in outer iterations) of LMO-based methods for achieving $f(x_k)-f(x^\star)\leq\epsilon$. r-CGS denotes restarted CGS. SOCGS requires strict complementarity for its local quadratic rate. Our $\C^2$ bounds require $\nu>0$. Total LMO complexities are reported in \cref{tab:total_lmo_complexity} (\cref{appendix:total_lmo_complexity}).}
    \label{tab:convergence_rates}
    \centering
    \begin{tabular}{cccccll}
        \toprule
        Algorithm & $f$ & $\nabla f$ Lip. & $\nabla^p f$ & $\X$ & global & local \\ \midrule
        FW & $\C^1$ cvx & \cmarkg & \xmarkr & cvx & $\O(\epsilon^{-1})$ & ---\\
        CGS & $\C^1$ cvx & \cmarkg & \xmarkr & cvx & $\O(\epsilon^{-1/2})$ & ---\\
        r-CGS & $\C^1$ scvx & \cmarkg & \xmarkr & cvx & $\O(\log\epsilon^{-1})$ & ---\\
        NFW & $\C^3$ cvx & \xmarkr & self-conc. & cvx & --- & $\O(\log \epsilon^{-1})$\\
        SOCGS & $\C^2$ scvx & \cmarkg & $\nabla^2 f$ Lip. & polytope & $\O(\log \epsilon^{-1})$ & $\O(\log\log \epsilon^{-1})$\\
        \textbf{Alg.~\ref{alg:dnfw},~\ref{alg:dnfw_away}} & $\C^2$ scvx & \cmarkg & $\nabla^2 f$ H\"older & cvx & $\O(\log \epsilon^{-1})$ & $\O(\log\log \epsilon^{-1})$ \\
        \textbf{Alg.~\ref{alg:dnfw},~\ref{alg:dnfw_away}} & $\C^3$ scvx & \cmarkg & $\nabla^3 f$ H\"older & cvx & $\O(\log \epsilon^{-1})$ & $\O(\log\log\epsilon^{-1})$ \\
        \bottomrule
    \end{tabular}
\end{table}

\paragraph{Notation.}
We use $\|\cdot\|$ to denote the Euclidean norm and $\langle\cdot,\cdot\rangle$ to denote the Euclidean inner product for vectors and Frobenius inner product for matrices.
For a positive definite matrix $H$, define $\|y\|_H\coloneq\langle Hy,y\rangle^{1/2}$ and $\|h\|_H^*\coloneq\langle h,H^{-1}h\rangle^{1/2}$.
We abbreviate $\|y\|_x\coloneq\|y\|_{\nabla^2f(x)}$ and $\|h\|_x^*\coloneq\|h\|_{\nabla^2f(x)}^*$, and define $D\coloneq\sup_{x,y\in\X}\|x-y\|_x$.
Given $x\in\X$ (usually we set $x = x_k$), the operator norm of a matrix $H\in\R^{n\times n}$ induced by the Euclidean norm, and the operator norms of $H$ and a third-order tensor $T\in\R^{n\times n\times n}$ are defined as
\begin{align*}
    \|H\|_2\coloneq\sup_{y\neq0}\frac{\|Hy\|}{\|y\|}, \qquad \|H\|_{\op} \coloneq \sup_{y\neq0}\frac{\|Hy\|_x^*}{\|y\|_x}, \qquad \|T\|_{\op} \coloneq \sup_{y,z,w\neq0}\frac{|T[y,z,w]|}{\|y\|_x\|z\|_x\|w\|_x},
\end{align*}
respectively.
We define the FW gap at $x_k$ by $g_k\coloneq\max_{v\in\X}\langle\nabla f(x_k),x_k-v\rangle$.
For a set $P\subset\R^n$, $\conv P$ and $\vertex P$ denote its convex hull and set of extreme points, respectively.

\section{Proposed Method: Damped Newton Frank--Wolfe Method}\label{sec:proposed-method}

We propose projection-free methods for solving~\eqref{prob:constrained} based on second-order information of $f$.

\paragraph{Damped Newton FW Method.}

We define the damped quadratic model of $f$ at $x_k$ as
\begin{align*}
    \Psi_\alpha(x;x_k)\coloneq f(x_k)+\langle\nabla f(x_k),x-x_k\rangle+\frac{1}{2\alpha}\|x-x_k\|_{x_k}^2.
\end{align*}
When $\alpha=1$, $\Psi_\alpha(\cdot;x_k)$ coincides with the second-order Taylor model of $f$ at $x_k$.
For $\alpha\in(0,1)$, the coefficient of the quadratic term is increased, and hence deviations from $x_k$ are penalized more strongly.
Thus, $\alpha$ serves as a stepsize, or equivalently, as a damping parameter for the Newton step.
Indeed, minimizing $\Psi_\alpha(\cdot;x_k)$ over $\X$ is equivalent to computing the scaled projection
\begin{align}
    \argmin_{x\in\X}\Psi_\alpha(x;x_k)=\argmin_{x\in\X}\left\|x-\left(x_k-\alpha\nabla^2f(x_k)^{-1}\nabla f(x_k)\right)\right\|_{x_k}^2. \label{eq:quadratic-subproblem}
\end{align}
Consequently, for $\X=\R^n$, the unique minimizer is $x_k-\alpha\nabla^2f(x_k)^{-1}\nabla f(x_k)$.
For constrained problems, minimizing $\Psi_\alpha(\cdot;x_k)$ over $\X$ yields a projected variable-metric step~\citep{Carderera2026-pk}; projected Newton methods under simple constraints were studied in~\citep{Bertsekas1982-hl}.
This fits the inexact proximal Newton method via an indicator function~\citep{Lee2014-zv}; our methods achieve global linear and fast local convergence over compact convex sets (see~\cref{sec:convergence-analysis}).

\begin{wrapfigure}[16]{R}{0.5\textwidth}
\vspace{-1.0\baselineskip}
\begin{algorithm}[H]
    \caption{Damped Newton FW Method}\label{alg:dnfw}
    \KwIn{\mbox{$x_0\in\X$, $\alpha_k\in(0,1]$, $\eta_k\geq0$}}
    \For{$k=0,1,2,\dots$}{
        \makebox[0pt][l]{$x_{k+1} \gets \fw(\nabla f(x_k), \frac{1}{\alpha_k}\nabla^2 f(x_k), x_k, \eta_k)$}\;
    }
\end{algorithm}
\begin{algorithm}[H]
    \caption{$\texttt{FW}(h, H, u, \eta)$}\label{alg:fw}
    $u_0 \gets u$\;
    \For{$t=0,1,2,\dots$}{
        $v_t \gets \argmin_{v \in \X} \langle h + H(u_t - u), v \rangle$\;
        \If{$\langle h + H(u_t - u), u_t - v_t \rangle \leq \eta$}{
            \Return $u_t$\;
        }
        $\gamma_t \gets \min\left\{1, \frac{\langle h + H(u_t - u), u_t - v_t \rangle}{\|v_t - u_t\|_H^2}\right\}$\;
        $u_{t+1} \gets (1 - \gamma_t) u_t + \gamma_t v_t$\;
    }
\end{algorithm}
\end{wrapfigure}

Minimizing $\Psi_\alpha(\cdot;x_k)$ requires solving a constrained quadratic problem, which may be computationally expensive.
We therefore approximately minimize $\Psi_\alpha(\cdot;x_k)$ using FW, which accesses $\X$ only through LMO calls.
Newton FW methods based on the undamped $\Psi_1(\cdot;x_k)$ have been proposed independently~\citep{Liu2022-gs,Carderera2026-pk}.
\citet{Liu2022-gs} provide only a local linear rate, whereas the convergence guarantees of \citet{Carderera2026-pk} are restricted to polytopes.

In this paper, we consider the damped quadratic model $\Psi_\alpha(\cdot;x_k)$ with $\alpha\in(0,1]$.
We now present the \emph{damped Newton FW method} (\cref{alg:dnfw}), which approximately minimizes $\Psi_\alpha(\cdot;x_k)$ over $\X$ using $\fw$ (\cref{alg:fw}).
We terminate \cref{alg:fw} when the FW gap of $\Psi_\alpha(\cdot;x_k)$ is sufficiently small, \ie, when it holds that
\begin{align}
    G_t\coloneq\max_{v\in\X}\langle\nabla\Psi_{\alpha_k}(u_t;x_k),u_t-v\rangle\leq \eta_k. \label{ineq:fw_termination}
\end{align}
We distinguish the model FW gap $G_t$ at the $t$th inner iteration from the FW gap $g_k$ of $f$ at the $k$th iteration.
\cref{alg:fw} is efficient because it uses the exact stepsize $\gamma_t = \argmin_{\gamma\in[0,1]}\Psi_{\alpha_k}((1-\gamma)u_t + \gamma v_t; x_k)$ with $h = \nabla f(x_k)$, $H = \frac{1}{\alpha_k}\nabla^2 f(x_k)$, and $u = x_k$.
The choices of $\alpha_k$ and $\eta_k$ in \cref{sec:convergence-analysis} guarantee convergence of the outer iterates.
We analyze the complexity of the inner FW iterations in \cref{alg:fw} up to an accuracy $\eta_k$. The proof is given in \cref{appendix:proof-lmo_complexity}.
\begin{proposition}
    \label[proposition]{proposition:lmo_complexity}
    Let $\alpha_k \in (0,1]$ and $\eta_k > 0$ be given.
    For any $k\geq0$, the number of inner iterations at the $k$th outer iteration is at most $\lceil24D^2/(\alpha_k\eta_k)\rceil$.
\end{proposition}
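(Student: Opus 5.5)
We follow the standard Frank--Wolfe gap analysis applied to the quadratic model $\phi(u)\coloneq\Psi_{\alpha_k}(u;x_k)$. This model is $L$-smooth with respect to the norm $\|\cdot\|_{x_k}$ with $L=1/\alpha_k$. Its diameter in that norm is bounded by $D$, since $D$ is defined as a supremum taken at $x=x_k$. Write $h_t\coloneq\phi(u_t)-\min_{\X}\phi$ for the inner primal gap. Convexity gives $h_t\le G_t$. The goal is to show that if the stopping test has not fired for $T$ consecutive iterations, then $\eta_k<G_t$ for all $t<T$, and this forces $T\le 24D^2/(\alpha_k\eta_k)$.

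\textbf{Step 1: per-iteration decrease.} We use the exact line search, i.e.\ $\gamma_t$ minimizes $\phi$ on the segment $[u_t,v_t]$. For any $\gamma\in[0,1]$,
\[
\phi(u_{t+1})\le \phi(u_t)-\gamma G_t+\frac{\gamma^2}{2\alpha_k}\|v_t-u_t\|_{x_k}^2\le \phi(u_t)-\gamma G_t+\frac{\gamma^2D^2}{2\alpha_k}.
\]
Minimizing the right-hand side over $\gamma$ gives
\[
h_t-h_{t+1}\ge \min\Bigl\{\tfrac{G_t}{2},\ \tfrac{\alpha_k G_t^2}{2D^2}\Bigr\}.
\]

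\textbf{Step 2: sublinear rate on $h_t$.} There is a burn-in issue at $t=0$. The quantity $h_0$ is bounded, since
\[
h_0\le G_0\le \|\nabla f(x_k)\|^*_{x_k}\,D,
\]
but this bound is not controlled by $D^2/\alpha_k$. We handle it by separating two cases. If $G_t\ge D^2/\alpha_k$, then Step 1 gives $h_{t+1}\le h_t-G_t/2\le h_t/2$, and in fact $\gamma_t=1$ forces $h_{t+1}\le D^2/(2\alpha_k)$. So after one step we have $h_1\le D^2/(2\alpha_k)$. From then on we are in the regime $h_{t+1}\le h_t-\alpha_k h_t^2/(2D^2)$, whose standard recursion yields
\[
h_t\le \frac{2D^2}{\alpha_k (t+1)}\quad\text{for } t\ge1.
\]

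\textbf{Step 3: gap rate (the main obstacle).} The stopping rule is on $G_t$, not $h_t$, so we need the usual Jaggi-type argument. Suppose $G_t>\eta_k$ for all $t\in\{T/2,\dots,T\}$. Summing the Step~1 decrease over this window gives
\[
h_{T/2}\ge \sum_{t=T/2}^{T}\min\Bigl\{\tfrac{\eta_k}{2},\ \tfrac{\alpha_k\eta_k^2}{2D^2}\Bigr\}.
\]
If $\eta_k\ge D^2/\alpha_k$, the min is $\eta_k/2$, and Step 2 immediately gives a contradiction once $T$ exceeds a small constant. Otherwise the bound reads
\[
\frac{T}{2}\cdot\frac{\alpha_k\eta_k^2}{2D^2}\le \frac{4D^2}{\alpha_k T},
\]
which yields $T\lesssim \sqrt{16}\,D^2/(\alpha_k\eta_k)$. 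Tracking constants with floors and the shifted index gives a bound of at most $24D^2/(\alpha_k\eta_k)$. The constant $24$ is where we expect the bookkeeping (floors, the $t=0$ step, ceiling) to bite, so we would write the summation window carefully as $t\in\{\lceil T/3\rceil,\dots,T-1\}$ if needed to reach $24$.

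Finally, since termination happens at the first $t$ with $G_t\le\eta_k$, the number of inner iterations is at most $\lceil 24D^2/(\alpha_k\eta_k)\rceil$.
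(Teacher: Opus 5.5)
\textbf{The gap.} The step that fails is your opening claim that $\X$ has diameter at most $D$ in $\|\cdot\|_{x_k}$. In $D=\sup_{x,y\in\X}\|x-y\|_x$ the local norm is evaluated at one of the two endpoints. Specializing $x=x_k$ therefore only bounds distances \emph{from} $x_k$, namely $\|y-x_k\|_{x_k}\le D$. For $t\ge1$ neither $u_t$ nor $v_t$ need be $x_k$. The best general bound is the triangle inequality through $x_k$, $\|v_t-u_t\|_{x_k}\le\|v_t-x_k\|_{x_k}+\|u_t-x_k\|_{x_k}\le 2D$, which is exactly what the paper uses. The factor $2$ cannot be dropped: for $\X=[-1,1]$, $f''(x)=(1+|x|)^{-2}$ and $x_k=0$, one has $D=1$ but $\|1-(-1)\|_{x_k}=2$.

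So the quadratic term in Step~1 is $2\gamma^2D^2/\alpha_k$, and the curvature constant is $C:=4D^2/\alpha_k$ rather than $D^2/\alpha_k$. Every $D^2$ in Steps~1--3 must become $4D^2$. This also explains why your computation landed at $4D^2/(\alpha_k\eta_k)$ while you expected ``bookkeeping'' to push it up to $24D^2/(\alpha_k\eta_k)$: most of that discrepancy is this factor $4$, not floors and ceilings.

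\textbf{The repair, and how it compares.} With this correction your route goes through, and it is genuinely different from the paper's.
\begin{itemize}
\item Comparing with $\gamma=1$ gives $h_1\le h_0-G_0+C/2\le C/2$ without any case split.
\item For $t\ge1$, the map $G\mapsto\min\{G/2,G^2/(2C)\}$ is monotone, and $G_t\ge h_t$, $h_t\le C/2$. Together these give $h_t-h_{t+1}\ge h_t^2/(2C)$, hence $h_t\le 2C/(t+1)$. You assert this regime without justifying the case $G_t\ge C$; this argument supplies it.
\item If $\eta_k\ge C$, then $G_0,G_1>\eta_k$ would give $h_2\le h_1-G_1+C/2<0$. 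So at most one update occurs.
\item If $\eta_k<C$ and $G_t>\eta_k$ for all $t\in\{\lceil U/2\rceil,\dots,U\}$, summing the decreases gives $\tfrac{U+1}{2}\cdot\tfrac{\eta_k^2}{2C}<h_{\lceil U/2\rceil}\le\tfrac{4C}{U+2}$, so $U+1<4C/\eta_k$. Hence the loop stops within $\lceil16D^2/(\alpha_k\eta_k)\rceil$ updates, which is below the stated bound.
\end{itemize}

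The paper instead compares against the open-loop stepsize $\gamma=2/(t+2)$ and proves $h_t\le8D^2/(\alpha_k(t+1))$ by induction. It then Abel-sums the weighted gaps to get $\sum_{t=1}^U(t+1)G_t\le12UD^2/\alpha_k$, i.e.\ $\min_{1\le t\le U}G_t\le24D^2/(\alpha_k(U+3))$. This avoids the case split on $\eta_k$ versus $C$ and the window bookkeeping, at the cost of the constant $24$ instead of $16$. In either case, Step~3 should be written out explicitly rather than left at ``if needed''.
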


\paragraph{Away-Step FW Inner Loop.}
We may also use away-step FW for~\eqref{eq:quadratic-subproblem}.
For this variant, we assume that the LMO returns a vertex of $\X$, including at initialization.
It maintains a finite active set $\Scal_t\subset\vertex\X$ with $u_t\in\conv\Scal_t$ and supplements standard FW steps toward a vertex with away steps that can remove unfavorable active vertices.
On polytopes, these steps can mitigate zigzagging near the optimal face~\citep{Wolfe1970-ax,GueLat1986-bg}.
For strongly convex $L$-smooth objectives over polytopes, it converges linearly~\citep{Lacoste-Julien2015-gb}.
At each outer iteration, SOCGS~\citep{Carderera2026-pk} computes an away-step FW update on $f$ and an approximate solution of~\eqref{eq:quadratic-subproblem} with $\alpha_k\equiv1$, then selects the point with lower objective value.
Our method uses away-step FW only to approximately solve the subproblem, avoiding the extra update on $f$ and candidate comparison while preserving the outer convergence analysis.

We present the damped Newton FW method with an away-step FW inner loop in \cref{alg:dnfw_away} (see~\cref{alg:afw} in~\cref{appendix:proof-lmo_complexity_away} for $\afw$).

\begin{algorithm}[!htbp]
    \caption{Damped Newton FW Method with $\protect\afw$}\label{alg:dnfw_away}
    \KwIn{$x_0\in\argmin_{v\in\X}\langle \nabla f(x), v\rangle$ for $x \in \X$, $\alpha_k\in(0,1]$, $\eta_k\geq0$
    }
    $\widehat{\Scal}_0 \gets \{x_0\}$, $\widehat\lambda_0 \gets 1$\;

    \For{$k=0,1,2,\dots$}{
        $(x_{k+1}, \widehat{\Scal}_{k+1}, \widehat\lambda_{k+1})\gets\afw(\nabla f(x_k), \frac{1}{\alpha_k}\nabla^2f(x_k), x_k, \eta_k, \widehat{\Scal}_k, \widehat\lambda_k)$\;
    }
\end{algorithm}

When $\X$ is a polytope, we analyze the complexity of the inner iterations up to $\eta_k$.
The dependence on the inner accuracy is logarithmic rather than inverse-linear.
The proof is given in \cref{appendix:proof-lmo_complexity_away}.

\begin{proposition}
    \label[proposition]{proposition:lmo_complexity_away}
    Let $\alpha_k \in (0,1]$ and $\eta_k > 0$ be given and let $\X$ be a polytope that is not a singleton.
    For any $k\geq0$, the number of inner iterations at the $k$th outer iteration is at most $|\widehat{\Scal}_k|-1+2\left\lceil\frac{\log\max\{1,g_k/\zeta_k\}}{-\log\left(1-\mu_k\delta_{\X}^2/(4L_kd_{\X}^2)\right)}\right\rceil$, where $\delta_{\X}$ is the pyramidal width of $\X$, $d_{\X}\coloneq\max_{x,y\in\X}\|x-y\|$, $\mu_k\coloneq\lambda_{\min}(\nabla^2f(x_k))/\alpha_k$, $L_k\coloneq\lambda_{\max}(\nabla^2f(x_k))/\alpha_k$, and $\zeta_k\coloneq\min\left\{L_kd_{\X}^2/2,\eta_k^2/(2L_kd_{\X}^2)\right\}$.
\end{proposition}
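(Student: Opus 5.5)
The plan is to run the standard linear-convergence argument for away-step FW on a strongly convex, smooth function over a polytope, due to \citet{Lacoste-Julien2015-gb}, on the subproblem objective $\phi(u)\coloneq\Psi_{\alpha_k}(u;x_k)$. We then translate a bound on the primal gap of $\phi$ into the termination test $G_t\le\eta_k$.

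\textbf{Step 1: curvature constants.} The function $\phi$ is quadratic with Hessian $\frac{1}{\alpha_k}\nabla^2 f(x_k)$, so it is $\mu_k$-strongly convex and $L_k$-smooth in the Euclidean norm. Let $u^\star_k$ be its minimizer over $\X$ and write $h_t\coloneq\phi(u_t)-\phi(u_k^\star)$.

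\textbf{Step 2: size of the initial gap.} Since $u_0=x_k$ and $\nabla\phi(x_k)=\nabla f(x_k)$, convexity of $\phi$ gives
\[
h_0\le\max_{v}\langle\nabla\phi(x_k),x_k-v\rangle=g_k .
\]

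\textbf{Step 3: per-step contraction.} Classify the iterations of $\afw$ as good steps and drop steps. With the exact line search, any step whose stepsize is not clipped at the maximum satisfies
\[
h_{t+1}\le\Bigl(1-\frac{\mu_k\delta_\X^2}{4L_kd_\X^2}\Bigr)h_t .
\]
This uses the pyramidal-width inequality $\langle -\nabla\phi(u_t),d_t\rangle\ge\frac{\delta_\X}{2}\cdot$ (strong Wolfe gap)$\cdot\|u_t-u^\star\|^{-1}$ (up to constants), together with $h_t\le\frac{1}{2\mu_k}\cdot(\text{gap}/\delta_\X)^2$. A FW step clipped at $\gamma=1$ gives $h_{t+1}\le h_t/2$, which is no worse than the rate above. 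Drop steps, meaning away steps clipped at $\gamma_{\max}$, give $h_{t+1}\le h_t$ and each removes one active vertex.

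\textbf{Step 4: counting drop steps.} The count is warm-started from $\widehat{\Scal}_k$. Every good step adds at most one vertex and every drop step removes one. Hence $\#\text{drop}\le|\widehat{\Scal}_k|-1+\#\text{good}$, and the total number of steps is at most $|\widehat{\Scal}_k|-1+2\,\#\text{good}$. This is where the factor $2$ and the term $|\widehat{\Scal}_k|-1$ come from.

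\textbf{Step 5: converting to the termination test.} We need the stopping criterion to fire once $h_t\le\zeta_k$. By $L_k$-smoothness and the exact line search on the FW direction, if $G_t$ is the FW gap at $u_t$ then
\[
h_t\ge h_t-h_{t+1}\ge\min\Bigl\{\frac{G_t}{2},\ \frac{G_t^2}{2L_kd_\X^2}\Bigr\}.
\]
If the test has not fired, then $G_t>\eta_k$. Assuming $\eta_k\le L_kd_\X^2$, the minimum is the quadratic term, so $h_t>\eta_k^2/(2L_kd_\X^2)\ge\zeta_k$. In the other case, $G_t/2>L_kd_\X^2/2\ge\zeta_k$, and the $\min$ in the definition of $\zeta_k$ covers both regimes. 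Consequently, while the loop runs we have $\zeta_k<h_t\le(1-\rho)^{\#\text{good}}g_k$, where $\rho\coloneq\mu_k\delta_\X^2/(4L_kd_\X^2)$. Solving for $\#\text{good}$ gives the logarithmic bound, with $\max\{1,\cdot\}$ handling the case $g_k\le\zeta_k$, in which no good step can occur. Combining this with Step 4 yields the claim.

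The main obstacle is Step 3 with precise constants. I would cite the Lacoste-Julien--Jaggi theorem, whose rate is $\mu\delta^2/(4Ld^2)$ for good steps, rather than reprove it, checking only that it applies unchanged to an arbitrary starting active set. A second point to check is that the drop-step bookkeeping in Step 4 is valid with the warm start $\widehat\lambda_k$ as the initial representation of $u_0$.
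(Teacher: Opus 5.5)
Your proposal is correct and follows the paper's proof essentially step for step. The shared ingredients are the contraction factor $1-\mu_k\delta_\X^2/(4L_kd_\X^2)$ for non-drop steps, obtained from $h_t\le\langle\nabla\phi(u_t),v_t^{\mathrm A}-v_t^{\mathrm{FW}}\rangle^2/(2\mu_k\delta_\X^2)$ together with the direction-selection rule, the factor $1/2$ for clipped FW steps, the drop-step count from the warm-started active set, the bound $h_0\le g_k$, and the link between $h_t\le\zeta_k$ and $G_t\le\eta_k$, which you argue by contrapositive.

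Two small slips in Step 5 should be fixed in a write-up. First, the lower bound on $h_t$ must come from comparing $u_t$ with the hypothetical point $u_t+\gamma(v_t^{\mathrm{FW}}-u_t)$, not with $u_{t+1}$, because step $t$ may be an away or drop step. Second, the case split should be on whether $G_t\le L_kd_\X^2$, not on $\eta_k$; both branches still give $h_t>\zeta_k$, so the conclusion stands.
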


\section{Convergence Analysis}\label{sec:convergence-analysis}
We now establish the global convergence and the local convergence of \cref{alg:dnfw,alg:dnfw_away}.

\subsection{Global Convergence}\label{subsec:global-convergence}

We first state H\"older continuity assumptions of the $p$th derivative of $f$ used in the analysis.
\begin{assumption}\label[assumption]{assumption:holder}
    The objective function $f$ is $p$ times differentiable and its $p$th derivative is H\"older continuous, \ie, for some $\nu \in [0,1]$, there exists a constant $L_{p,\nu} > 0$ such that, for all $x,y\in\X$,
    \begin{align}
        \|\nabla^p f(x) - \nabla^p f(y)\|_{\op} \leq L_{p,\nu} \|x-y\|_{x}^\nu.
    \end{align}
    When $p=3$, suppose that $\Gamma_{3,\X}\coloneq\sup_{x\in\X}\|\nabla^3f(x)\|_{\op}$ is finite. Throughout the $p=3$ analysis, $L_{3,\nu}$ denotes a constant chosen sufficiently large so that $\Gamma_{3,\X}\leq2\left(L_{3,\nu}/(1+\nu)\right)^{1/(1+\nu)}$.
\end{assumption}

Let $p\in\{2,3\}$, $q\coloneq p+\nu$, $\rho\in(0,1)$, and $\omega>0$.
We define $r_k\coloneq\nabla f(x_k)+s_{k-1}$ and $w_k\coloneq\nabla f(x_k)+s_k$, where $s_k\coloneq-\nabla f(x_k) - \frac{1}{\alpha_k}\nabla^2 f(x_k)(x_{k+1}-x_k)$.
We initialize $s_{-1}=0$, $\eta_{-1}=0$, and $\Delta_{-1}=0$, and set $\Delta_k\coloneq\max\{\|r_k\|_{x_k}^*,\rho\Delta_{k-1}\}$.
If the FW gap $g_k=0$, the method terminates at an optimal solution; otherwise, $\Delta_k>0$.
The inner accuracy for global convergence is
\begin{align}
    \eta_k\coloneq\frac{\omega\Delta_k^2}{1+B_q\Delta_k^{(q-2)/(q-1)}}, \label{eq:global_inner_accuracy}
\end{align}
where $B_q\coloneq(9L_{p,\nu})^{\frac{1}{q-1}}\kappa_\omega^{\frac{q-2}{q-1}}$ and $\kappa_\omega\coloneq\frac{1+\sqrt{1+4\omega(1+\rho^{-2})}}{2}$.

\paragraph{Root Newton Stepsize.}
\citet{Hanzely2026-kv} proposed a root Newton stepsize for unconstrained problems.
We consider $\alpha_k\coloneq\frac{1}{1+\theta_k}$ and $\theta_k\coloneq B_q\Delta_k^{(q-2)/(q-1)}$.
As shown below, this value guarantees acceptance and therefore provides a safe endpoint for backtracking.
We instead select $\theta_k$ by backtracking and use $\alpha_k=(1+\theta_k)^{-1}$, while keeping \eqref{eq:global_inner_accuracy} fixed throughout the search.

\paragraph{Residual Backtracking.}
\citet{Hanzely2026-kv} also proposed the universal backtracking strategy for unconstrained problems.
We extend it to constrained problems with inexact updates and cap the trial damping parameter.
We present the \emph{residual backtracking strategy} in \cref{alg:residual_backtracking}.
Starting with $\bar\theta_0>0$ and $\tau>1$, it uses the procedure in \cref{alg:residual_backtracking} at each outer iteration.
The trial quantities $x_{k,j}$, $s_{k,j}$, $w_{k,j}$, and $r_{k+1,j}$ are computed as in the algorithm, and the first trial satisfying
\begin{align}
    \langle r_{k+1,j},\nabla^2f(x_k)^{-1}w_{k,j}\rangle\geq\frac{c_p}{\alpha_{k,j}\theta_{k,j}}\|r_{k+1,j}\|_{x_k}^{*2} \label{ineq:backtracking}
\end{align}
is accepted.
Here, $c_p\coloneq1/2$ for $p=2$ and $c_p\coloneq1/4$ for $p=3$.
The accepted $\theta_k$ need not equal the root value, and $\eta_k$ is not recomputed from it.

\begin{algorithm}[htbp]
    \caption{Residual Backtracking for Damped Newton FW}
    \label{alg:residual_backtracking}
    \KwIn{$x_k$, $\Delta_k>0$, $\bar\theta_k>0$, $\tau>1$}
    Compute $\eta_k$ using \eqref{eq:global_inner_accuracy}\;
    \For{$j=0,1,2,\dots$}{
        $\theta_{k,j}\gets\min\{\tau^j\bar\theta_k,B_q\Delta_k^{(q-2)/(q-1)}\}$, $\alpha_{k,j}\gets1/(1+\theta_{k,j})$\;
        $x_{k,j}\gets\fw(\nabla f(x_k),\alpha_{k,j}^{-1}\nabla^2f(x_k),x_k,\eta_k)$\label{line:inner_fw}\;
        $s_{k,j}\gets-\nabla f(x_k)-\alpha_{k,j}^{-1}\nabla^2f(x_k)(x_{k,j}-x_k)$\;
        $w_{k,j}\gets\nabla f(x_k)+s_{k,j}$, $r_{k+1,j}\gets\nabla f(x_{k,j})+s_{k,j}$\;
        \If{$\langle r_{k+1,j},\nabla^2f(x_k)^{-1}w_{k,j}\rangle\geq\frac{c_p}{\alpha_{k,j}\theta_{k,j}}\|r_{k+1,j}\|_{x_k}^{*2}$}{
            $\theta_k\gets\theta_{k,j}$, $\alpha_k\gets\alpha_{k,j}$, $x_{k+1}\gets x_{k,j}$, $s_k\gets s_{k,j}$, $\bar\theta_{k+1}\gets\theta_k/\tau$\;
            \textbf{break}\;
        }
    }
\end{algorithm}

Each call to $\fw$ may be replaced by $\afw$ in Line~\ref{line:inner_fw}, initialized with a copy of the current active-set representation.
Only the active set and weights of the accepted trial are carried to the next outer iteration.
The following residual estimate applies to both strategies and to each trial satisfying the inner stopping condition.
Its proof is given in \cref{appendix:proof-q_r_relation_inexact}.
\begin{lemma}\label[lemma]{lemma:q_r_relation_inexact}
    For every $k\geq0$, the following inequality holds:
    \begin{align}
        \|w_k\|_{x_k}^{*2}\leq\|r_k\|_{x_k}^*\|w_k\|_{x_k}^*+\frac{\eta_k+\eta_{k-1}}{\alpha_k}. \label{ineq:q_r_inexact_basic}
    \end{align}
    With \eqref{eq:global_inner_accuracy} at each iteration, $\alpha_k\geq(1+B_q\Delta_k^{(q-2)/(q-1)})^{-1}$ implies $\|w_k\|_{x_k}^*\leq\kappa_\omega\Delta_k$.
\end{lemma}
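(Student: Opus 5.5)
The plan is to express $\|w_k\|_{x_k}^{*2}$ through the step $x_{k+1}-x_k$, split $w_k$ into $r_k$ plus a difference of the multipliers $s_k,s_{k-1}$, and control that difference with the inner stopping rule \eqref{ineq:fw_termination} at iterations $k$ and $k-1$.

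\emph{First inequality.} Write $H_k\coloneq\nabla^2f(x_k)$. By definition of $s_k$ we have $w_k=-\alpha_k^{-1}H_k(x_{k+1}-x_k)$, so $H_k^{-1}w_k=-\alpha_k^{-1}(x_{k+1}-x_k)$. Hence
\begin{align*}
\|w_k\|_{x_k}^{*2}=\langle w_k,H_k^{-1}w_k\rangle=\langle r_k,H_k^{-1}w_k\rangle+\tfrac{1}{\alpha_k}\langle s_k-s_{k-1},x_k-x_{k+1}\rangle,
\end{align*}
where we used $w_k=r_k+(s_k-s_{k-1})$. Cauchy--Schwarz in the $H_k$-geometry bounds the first term by $\|r_k\|_{x_k}^*\|w_k\|_{x_k}^*$.

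For the second term, note that $\nabla\Psi_{\alpha_k}(x_{k+1};x_k)=\nabla f(x_k)+\alpha_k^{-1}H_k(x_{k+1}-x_k)=-s_k$. The stopping criterion \eqref{ineq:fw_termination} (the FW or AFW loop returns $x_{k+1}=u_t$ satisfying it) therefore reads $\langle s_k,v-x_{k+1}\rangle\le\eta_k$ for all $v\in\X$.
\begin{itemize}
\item Taking $v=x_k$ gives $\langle s_k,x_k-x_{k+1}\rangle\le\eta_k$.
\item Applying the same criterion at iteration $k-1$ with $v=x_{k+1}\in\X$ gives $\langle s_{k-1},x_{k+1}-x_k\rangle\le\eta_{k-1}$.
\item For $k=0$ this is trivial, since $s_{-1}=0$ and $\eta_{-1}=0$.
\end{itemize}
Summing the two bounds yields the first claim. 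The same argument applies verbatim to each backtracking trial, since every trial satisfies the inner stopping condition.

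\emph{Second claim.} Set $W\coloneq\|w_k\|_{x_k}^*$, $R\coloneq\|r_k\|_{x_k}^*\le\Delta_k$, and $E\coloneq(\eta_k+\eta_{k-1})/\alpha_k$. The quadratic inequality $W^2\le RW+E$ gives $W\le(R+\sqrt{R^2+4E})/2$. It therefore suffices to show $E\le\omega(1+\rho^{-2})\Delta_k^2$. Let $a\coloneq(q-2)/(q-1)\in[0,1)$.
\begin{itemize}
\item \emph{The $\eta_k$ term.} The hypothesis gives $1/\alpha_k\le1+B_q\Delta_k^a$, so $\eta_k/\alpha_k\le\omega\Delta_k^2$.
\item \emph{The $\eta_{k-1}$ term.} We have $\eta_{k-1}/\alpha_k\le\omega(1+B_q\Delta_k^a)\,\phi(\Delta_{k-1})$, where $\phi(t)\coloneq t^2/(1+B_qt^a)$. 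The function $\phi$ is increasing on $(0,\infty)$ because $a<2$. The definition $\Delta_k\ge\rho\Delta_{k-1}$ gives $\Delta_{k-1}\le\Delta_k/\rho$. Hence
\begin{align*}
\phi(\Delta_{k-1})\le\phi(\Delta_k/\rho)=\frac{\rho^{-2}\Delta_k^2}{1+B_q\rho^{-a}\Delta_k^a}\le\frac{\rho^{-2}\Delta_k^2}{1+B_q\Delta_k^a},
\end{align*}
using $\rho^{-a}\ge1$. Therefore $\eta_{k-1}/\alpha_k\le\omega\rho^{-2}\Delta_k^2$.
\end{itemize}
Substituting $R\le\Delta_k$ and $E\le\omega(1+\rho^{-2})\Delta_k^2$ gives $W\le\Delta_k\bigl(1+\sqrt{1+4\omega(1+\rho^{-2})}\bigr)/2=\kappa_\omega\Delta_k$.

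The main obstacle is the cross-iteration term $\eta_{k-1}/\alpha_k$. Here $\eta_{k-1}$ was built from $\Delta_{k-1}$, while $\alpha_k$ is tied to $\Delta_k$, and $\Delta_k$ may exceed $\Delta_{k-1}$. A naive ratio bound loses a factor $\rho^{-a}$. The monotonicity of $t^2/(1+B_qt^a)$ avoids this loss and keeps the constant at exactly $\rho^{-2}$.
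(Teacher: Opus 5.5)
Your proof is correct and follows essentially the same argument as the paper. The first inequality comes from the same two applications of the stopping rule (at iteration $k$ with $v=x_k$ and at iteration $k-1$ with $v=x_{k+1}$), and the second claim reduces to $\eta_{k-1}\le\rho^{-2}\eta_k$ followed by solving the quadratic inequality. The only difference is minor: you get $\eta_{k-1}\le\rho^{-2}\eta_k$ from monotonicity of $t\mapsto t^2/(1+B_qt^{(q-2)/(q-1)})$, whereas the paper splits into the cases $\Delta_k/\Delta_{k-1}\in[\rho,1]$ and $\Delta_k/\Delta_{k-1}\ge1$.
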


Residual backtracking satisfies the following one-step decrease; its proof is given in~\cref{appendix:proof-inexact_one_step}.
\begin{theorem}[Sufficient decrease]\label{theorem:inexact_one_step}
    Suppose that \cref{assumption:holder} holds and use \eqref{eq:global_inner_accuracy}.
    For $\Delta_k>0$, let $x_{k+1}\in\X$ satisfy \eqref{ineq:fw_termination}.
    If either $\theta_k=B_q\Delta_k^{(q-2)/(q-1)}$ or $\theta_k$ is accepted by \cref{alg:residual_backtracking}, then
    \begin{align}
        f(x_k)-f(x_{k+1})\geq\frac{c_p}{B_q}\frac{\|r_{k+1}\|_{x_k}^{*2}}{\Delta_k^{(q-2)/(q-1)}}-\eta_k. \label{ineq:one_step_decrease_inexact}
    \end{align}
\end{theorem}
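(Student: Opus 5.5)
The plan is to obtain \eqref{ineq:one_step_decrease_inexact} from a single convexity inequality, and then to show that the root value $\theta_k=B_q\Delta_k^{(q-2)/(q-1)}$ automatically satisfies the acceptance test \eqref{ineq:backtracking}. After that, both cases of the statement are treated identically.

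Throughout, write $H\coloneq\nabla^2f(x_k)$ and $d\coloneq x_{k+1}-x_k$.

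\textbf{Step 1: identities.} By the definitions of $s_k$ and $w_k$,
\begin{align*}
w_k=-\alpha_k^{-1}Hd, \qquad \text{so} \qquad d=-\alpha_kH^{-1}w_k.
\end{align*}
Moreover $\nabla\Psi_{\alpha_k}(x_{k+1};x_k)=-s_k$. The stopping rule \eqref{ineq:fw_termination}, evaluated at $v=x_k$, therefore gives $\langle s_k,d\rangle\geq-\eta_k$.

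\textbf{Step 2: convexity.} Convexity of $f$ gives $f(x_k)\geq f(x_{k+1})-\langle\nabla f(x_{k+1}),d\rangle$. Substitute $\nabla f(x_{k+1})=r_{k+1}-s_k$ to get
\begin{align*}
f(x_k)-f(x_{k+1})\geq-\langle r_{k+1},d\rangle+\langle s_k,d\rangle\geq\alpha_k\langle r_{k+1},H^{-1}w_k\rangle-\eta_k.
\end{align*}
Suppose the acceptance test \eqref{ineq:backtracking} holds for $\theta_k$. Then the right-hand side is at least $\frac{c_p}{\theta_k}\|r_{k+1}\|_{x_k}^{*2}-\eta_k$. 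Since every trial value obeys $\theta_k\leq B_q\Delta_k^{(q-2)/(q-1)}$ by the cap in \cref{alg:residual_backtracking}, this yields \eqref{ineq:one_step_decrease_inexact}. This settles the backtracking case.

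\textbf{Step 3: the root value passes the test.} Let $e\coloneq\nabla f(x_{k+1})-\nabla f(x_k)-Hd$. Since $Hd=-\alpha_kw_k$ and $1-\alpha_k=\alpha_k\theta_k$,
\begin{align*}
r_{k+1}=\alpha_k\theta_kw_k+e.
\end{align*}
Hence
\begin{align*}
\alpha_k\theta_k\langle r_{k+1},H^{-1}w_k\rangle=\|r_{k+1}\|_{x_k}^{*2}-\langle r_{k+1},H^{-1}e\rangle\geq\|r_{k+1}\|_{x_k}^*\bigl(\|r_{k+1}\|_{x_k}^*-\|e\|_{x_k}^*\bigr).
\end{align*}
So it suffices to show $\|e\|_{x_k}^*\leq(1-c_p)\|r_{k+1}\|_{x_k}^*$. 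I will aim for the stronger intermediate bound $\|e\|_{x_k}^*\leq\frac19\alpha_k\theta_k\|w_k\|_{x_k}^*$. Given that bound, $\|r_{k+1}\|_{x_k}^*\geq\frac89\alpha_k\theta_k\|w_k\|_{x_k}^*$, hence $\|e\|_{x_k}^*\leq\frac18\|r_{k+1}\|_{x_k}^*$, which is well within $1-c_p$.

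To prove the intermediate bound, use the H\"older Taylor estimate.
\begin{itemize}
\item For $p=2$: $\|e\|_{x_k}^*\leq\frac{L_{2,\nu}}{1+\nu}\|d\|_{x_k}^{1+\nu}$.
\item For $p=3$: a $\Gamma_{3,\X}\|d\|_{x_k}^2/2$ term plus a H\"older remainder of order $\|d\|_{x_k}^{2+\nu}$.
\end{itemize}
Then substitute $\|d\|_{x_k}=\alpha_k\|w_k\|_{x_k}^*$, the bound $\|w_k\|_{x_k}^*\leq\kappa_\omega\Delta_k$ from \cref{lemma:q_r_relation_inexact} (valid since $\alpha_k$ is the root value), and $\alpha_k\leq1/\theta_k$. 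For $p=2$, the requirement reduces to
\begin{align*}
\theta_k^{1+\nu}\geq9L_{2,\nu}\kappa_\omega^\nu\Delta_k^\nu,
\end{align*}
which is exactly the definition of $\theta_k=B_q\Delta_k^{\nu/(1+\nu)}$. Step 2 then applies and gives \eqref{ineq:one_step_decrease_inexact} in the root case as well.

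\textbf{Main obstacle.} The hard part is the $p=3$ case of Step 3. There the exponent is $(q-2)/(q-1)=(1+\nu)/(2+\nu)$, and the $\Gamma_{3,\X}$ term must be absorbed. I would first split $\frac19$ between the two terms. Then I would use the convention $\Gamma_{3,\X}\leq2(L_{3,\nu}/(1+\nu))^{1/(1+\nu)}$ to express the $\Gamma$-term in terms of $L_{3,\nu}$. Finally, I would check that the constant $9$ in $B_q$, together with the looser constant $c_p=1/4$, leaves enough slack. This last check is where the constants might not close as written.
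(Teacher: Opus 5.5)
Your Steps 1--2 are the paper's argument: the same convexity chain, the stopping rule \eqref{ineq:fw_termination} at $v=x_k$, and the cap $\theta_k\le B_q\Delta_k^{(q-2)/(q-1)}$. Step 3 is where you differ.

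The paper does not verify acceptance of the root value by hand. It checks \eqref{ineq:backtracking_sufficient} via \cref{lemma:q_r_relation_inexact} and then invokes \cref{proposition:residual_acceptance}. The proof of that proposition (\cref{lemma:prod_res_dir}) expands $\|r_{k+1}-(1-\alpha_k)w_k-\tfrac12\nabla^3f(x_k)[d,d]\|_{x_k}^{*2}$ as a square, applies Young's inequality to the cross terms, and reduces everything to scalar conditions verified in \cref{lemma:unified_theta_implies_individual}.

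Your decomposition $r_{k+1}=\alpha_k\theta_kw_k+e$ followed by Cauchy--Schwarz is a genuinely different and more uniform route. It treats $\tfrac12\nabla^3f(x_k)[d,d]$ as just part of $e$. It uses only $\theta_k^{q-1}\ge9L_{p,\nu}\|w_k\|_{x_k}^{*(q-2)}$, so it in fact reproves \cref{proposition:residual_acceptance}. The price is a constant factor. For $p=2$ the paper's square identity needs only $\|e\|_{x_k}^*\le\alpha_k\theta_k\|w_k\|_{x_k}^*$, while your triangle-inequality step loses a factor. For $p=2$ your computation is correct as written.

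For $p=3$, however, the target $\|e\|_{x_k}^*\le\frac19\alpha_k\theta_k\|w_k\|_{x_k}^*$ does not follow from your estimates, so the step fails as stated. This is exactly the worry you flagged. Write $W\coloneq\|w_k\|_{x_k}^*$ and take $\nu=0$. Your bound is $\|e\|_{x_k}^*\le\frac32L_{3,0}\alpha_k^2W^2$. At $W=\kappa_\omega\Delta_k=\theta_k^2/(9L_{3,0})$, which \cref{lemma:q_r_relation_inexact} does not exclude, the target becomes $\frac32\alpha_k\theta_k\le1$, i.e.\ $\theta_k\le2$. This is false for large $\Delta_k$. The $\Gamma_{3,\X}$-term alone exhausts the $\frac19$ budget as $\theta_k\to\infty$, and for $\nu=1$ the estimate already fails at $\theta_k=1$.

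The repair stays inside your framework. You only need $\|e\|_{x_k}^*\le(1-c_p)\|r_{k+1}\|_{x_k}^*$. The weaker target $\|e\|_{x_k}^*\le\frac13\alpha_k\theta_kW$ gives $\|r_{k+1}\|_{x_k}^*\ge\frac23\alpha_k\theta_kW\ge2\|e\|_{x_k}^*$, which suffices for both $c_2=\frac12$ and $c_3=\frac14$. This target does close:
\begin{itemize}
\item Set $\Lambda\coloneq(L_{3,\nu}/(1+\nu))^{1/(1+\nu)}$ and use $W^{1+\nu}\le\theta_k^{2+\nu}/(9L_{3,\nu})$. The third-derivative part then satisfies $\Lambda\alpha_k^2W^2\le(9(1+\nu))^{-1/(1+\nu)}\frac{\theta_k^{1/(1+\nu)}}{1+\theta_k}\,\alpha_k\theta_kW\le18^{-1/2}\alpha_k\theta_kW$.
\item The H\"older remainder is at most $\frac{(\alpha_k\theta_k)^{1+\nu}}{9(1+\nu)(2+\nu)}\alpha_k\theta_kW\le\frac1{18}\alpha_k\theta_kW$.
\end{itemize}
The total is below $0.3\,\alpha_k\theta_kW$.
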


For $w_{k,j}\neq0$, \cref{proposition:residual_acceptance} in \cref{appendix:residual_acceptance} gives the following sufficient condition for \eqref{ineq:backtracking}:
\begin{align}
    \theta_{k,j}\geq(9L_{p,\nu})^{\frac{1}{q-1}}\|w_{k,j}\|_{x_k}^{*\frac{q-2}{q-1}}. \label{ineq:backtracking_sufficient}
\end{align}
The root value guarantees acceptance by \cref{lemma:q_r_relation_inexact}; the case $w_{k,j}=0$ is immediate.
Consequently, truncation guarantees finite backtracking, as stated below and proved in \cref{appendix:proof-residual_backtracking}.

\begin{corollary}[Finite backtracking]\label[corollary]{corollary:residual_backtracking}
    Suppose that \cref{assumption:holder} holds with $p\in\{2,3\}$ and $\nu\in[0,1]$.
    Let $\bar\theta_0>0$ and $\tau>1$.
    With the accuracy \eqref{eq:global_inner_accuracy}, \cref{alg:residual_backtracking} terminates its inner solves and backtracking after finitely many iterations at every nonterminal outer iteration.
\end{corollary}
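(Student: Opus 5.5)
We split the claim into two finiteness statements: each inner solve terminates, and the backtracking loop over $j$ terminates.

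\emph{Inner solves.} At a nonterminal outer iteration we have $g_k>0$, so $\Delta_k>0$ by the discussion preceding \eqref{eq:global_inner_accuracy}. Hence $\eta_k>0$ as defined in \eqref{eq:global_inner_accuracy}, since the denominator $1+B_q\Delta_k^{(q-2)/(q-1)}$ is finite and positive. Every trial uses $\alpha_{k,j}=1/(1+\theta_{k,j})\in(0,1]$, so \cref{proposition:lmo_complexity} applies. Each call to $\fw$ in Line~\ref{line:inner_fw} therefore stops after at most $\lceil24D^2/(\alpha_{k,j}\eta_k)\rceil$ iterations. Because $\theta_{k,j}\le B_q\Delta_k^{(q-2)/(q-1)}$, we have $\alpha_{k,j}\ge(1+B_q\Delta_k^{(q-2)/(q-1)})^{-1}$, and this bound is uniform in $j$. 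If $\afw$ is used instead and $\X$ is a polytope, \cref{proposition:lmo_complexity_away} gives the analogous finite bound. Otherwise we note that away-step FW also terminates, since its gap decreases at least as fast as in the FW analysis.

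\emph{Backtracking.} The sequence $\tau^j\bar\theta_k$ diverges because $\tau>1$ and $\bar\theta_k>0$. So there is a finite index $j^\ast$, for example $j^\ast=\max\{0,\lceil\log_\tau(B_q\Delta_k^{(q-2)/(q-1)}/\bar\theta_k)\rceil\}$, with $\theta_{k,j^\ast}=B_q\Delta_k^{(q-2)/(q-1)}$; that is, the trial equals the root value. We claim that \eqref{ineq:backtracking} holds at or before $j^\ast$.

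\emph{Acceptance at the root value.} First suppose $w_{k,j^\ast}=0$. Then the left side of \eqref{ineq:backtracking} is zero, but the right side is nonnegative, so this case needs care. Since $w=\nabla f(x_k)+s$, we get $x_{k,j}-x_k=0$, so $x_{k,j}=x_k$ and $r_{k+1,j}=\nabla f(x_k)+s_{k,j}=w_{k,j}=0$. Both sides are then zero and the inequality holds.

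Now suppose $w_{k,j^\ast}\neq0$. The trial solution satisfies the inner stopping rule \eqref{ineq:fw_termination} with accuracy $\eta_k$, and $\alpha_{k,j^\ast}\ge(1+B_q\Delta_k^{(q-2)/(q-1)})^{-1}$. The second part of \cref{lemma:q_r_relation_inexact} applies to each trial satisfying the inner stopping condition, which gives $\|w_{k,j^\ast}\|_{x_k}^*\le\kappa_\omega\Delta_k$. Since $(q-2)/(q-1)\ge0$, we obtain
\begin{align*}
(9L_{p,\nu})^{\frac1{q-1}}\|w_{k,j^\ast}\|_{x_k}^{*\frac{q-2}{q-1}}\le(9L_{p,\nu})^{\frac1{q-1}}\kappa_\omega^{\frac{q-2}{q-1}}\Delta_k^{\frac{q-2}{q-1}}=B_q\Delta_k^{\frac{q-2}{q-1}}=\theta_{k,j^\ast}.
\end{align*}
This is exactly the sufficient condition \eqref{ineq:backtracking_sufficient}, so \cref{proposition:residual_acceptance} yields \eqref{ineq:backtracking} at $j^\ast$. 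The edge case $q=2$ (that is, $p=2$, $\nu=0$) is consistent: the exponent is $0$, and $\theta_{k,j}$ is constant equal to $B_q=9L_{2,0}$, which meets the requirement.

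\emph{Conclusion.} The loop breaks at some $j\le j^\ast$ after finitely many trials, each with a finite inner solve. The main obstacle is justifying the use of \cref{lemma:q_r_relation_inexact} for a trial point rather than an accepted iterate. Its statement covers each trial satisfying the inner stopping condition, and it uses only $\eta_k$, $\eta_{k-1}$ and the previous residual $r_k$, all of which are fixed during the search. So the application goes through.
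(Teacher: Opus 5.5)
Your treatment of the FW inner solves and of the backtracking loop is correct and matches the paper's argument. The trial index reaches the root value $B_q\Delta_k^{(q-2)/(q-1)}$ after finitely many steps. The case $w_{k,j^\ast}=0$ is trivial. Otherwise, the trial version of \cref{lemma:q_r_relation_inexact} gives \eqref{ineq:backtracking_sufficient}, and \cref{proposition:residual_acceptance} forces acceptance.

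The one genuine gap is the $\afw$ inner loop over a non-polyhedral $\X$. The paper's proof covers this case, since the away-step variant is claimed for general compact convex sets. Your justification, that the gap of $\afw$ ``decreases at least as fast as in the FW analysis,'' does not hold. The bound in \cref{proposition:lmo_complexity} comes from comparing each step with the point at $\gamma=2/(t+2)$ on the segment $[u_t,v_t]$, which exact line search dominates. $\afw$, however, also takes away steps that do not move along that segment. Among them are drop steps, i.e., away steps truncated at $\gamma_{t,\max}$, whose decrease of $\Psi_\alpha(\cdot;x_k)$ can be arbitrarily small. Moreover, \cref{proposition:lmo_complexity_away} is unavailable without polyhedrality, since it relies on the pyramidal width.

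What is missing is a counting argument. Let $D_k\coloneq\max_{u,v\in\X}\|u-v\|_{x_k}$. The direction-selection rule gives $\langle\nabla\Psi_\alpha(u_t;x_k),d_t\rangle\geq G_t$. By the quadratic expansion and exact line search, every non-drop step then decreases the model by at least $\alpha G_t^2/(2D_k^2)$ when $\gamma_t<\gamma_{t,\max}$, or by at least $G_t/2$ for an FW step with $\gamma_t=1$. Drop steps do not increase the model. Each drop step removes one vertex from the active set, and each FW step adds at most one. Hence the number of drop steps is at most the number of FW steps plus $|\Scal_0|-1$. A non-terminating run would therefore contain infinitely many non-drop steps. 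While $G_t>\eta$, each of these decreases the model by at least $\min\{\alpha\eta^2/(2D_k^2),\eta/2\}>0$, which contradicts the boundedness of $\Psi_\alpha(\cdot;x_k)$ from below on the compact set $\X$. With this argument in place of your one-line remark, the proof is complete.
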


\paragraph{Global Linear Convergence.}
Under \cref{assumption:holder} with $q>2$, there exist $\mu,L>0$ such that $\mu I\preceq\nabla^2f(x)\preceq LI$ for all $x\in\X$ (see \cref{lemma:uniform_hessian_bounds}).
We use the sufficient decrease just established to control the potential, without requiring residual backtracking to accept the root stepsize.
The proof of the following proposition is given in \cref{appendix:proof-potential_decrease}.

\begin{proposition}\label[proposition]{proposition:potential_decrease}
    Suppose that \cref{assumption:holder} holds with $q>2$.
    Let $\omega>0$ satisfy
    \begin{align}
        0<\omega<\frac{c_p\mu}{L}\rho^2(1-\rho^{q/(q-1)}). \label{ineq:omega_condition}
    \end{align}
    Then, there exist constants $\chi,c_\chi>0$ such that $\Phi(x_k)\coloneq f(x_k)-f(x^\star)+\chi\Delta_k^{\frac{q}{q-1}}$ satisfies
    \begin{align}
        \Phi(x_k)-\Phi(x_{k+1})\geq c_\chi\frac{\Delta_{k+1}^2}{\Delta_k^{\frac{q-2}{q-1}}}. \label{ineq:potential_decrease}
    \end{align}
\end{proposition}

Condition \eqref{ineq:omega_condition} ensures that $\chi>0$ can be chosen in \eqref{ineq:chi_interval}, with $c_\chi$ defined by \eqref{eq:c_chi}.
We now state the global convergence of~\cref{alg:dnfw,alg:dnfw_away}; its proof is given in \cref{appendix:proof-global_linear} (see also \cref{appendix:proofs-global-convergence} for terminology).

\begin{theorem}[Global R-linear convergence]\label{theorem:global_linear}
    Suppose that \cref{assumption:holder} holds with $q>2$ and that $\omega$ satisfies \eqref{ineq:omega_condition}.
    Then, for all $k\geq0$,
    \begin{align}
        f(x_k)-f(x^\star) \leq \Phi(x_k) \leq \left(1+\frac{c_\chi\rho^{(q-2)/(q-1)}}{\chi+\left(L/(2\mu)+\omega\rho^{-2}\right)\left(\Phi(x_0)/\chi\right)^{(q-2)/q}}\right)^{-k}\Phi(x_0). \label{ineq:global_linear_obj}
    \end{align}
\end{theorem}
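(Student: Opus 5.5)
The plan is to convert the potential decrease of \cref{proposition:potential_decrease} into a geometric contraction $\Phi(x_{k+1})\leq(1+c)^{-1}\Phi(x_k)$. To do this, I will bound $\Phi(x_{k+1})$ from above by a multiple of $\Delta_{k+1}^2/\Delta_k^{(q-2)/(q-1)}$. The first inequality $f(x_k)-f(x^\star)\leq\Phi(x_k)$ is immediate from $\chi\Delta_k^{q/(q-1)}\geq0$. For the second, the goal is
\begin{align*}
    \Phi(x_{k+1})\leq M_k\frac{\Delta_{k+1}^2}{\Delta_k^{(q-2)/(q-1)}},
\end{align*}
with $M_k$ bounded uniformly in $k$. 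Then \eqref{ineq:potential_decrease} gives $\Phi(x_k)-\Phi(x_{k+1})\geq(c_\chi/M)\Phi(x_{k+1})$, hence $\Phi(x_{k+1})\leq(1+c_\chi/M)^{-1}\Phi(x_k)$, and induction yields the claim.

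The first step is to bound the primal gap at $x_{k+1}$ by the residual. Since $\mu I\preceq\nabla^2 f\preceq LI$ on $\X$ (\cref{lemma:uniform_hessian_bounds}), $f$ is $\mu$-strongly convex on $\X$. I will write the optimality measure through $r_{k+1}=\nabla f(x_{k+1})+s_k$. The inexact stopping rule \eqref{ineq:fw_termination} at iteration $k$ says $\langle -s_k, x_{k+1}-v\rangle\leq\eta_k$ for all $v\in\X$. Hence
\begin{align*}
    \langle\nabla f(x_{k+1}),x_{k+1}-x^\star\rangle\leq\langle r_{k+1},x_{k+1}-x^\star\rangle+\eta_k.
\end{align*}
Combining this with strong convexity, $f(x_{k+1})-f(x^\star)\leq\langle\nabla f(x_{k+1}),x_{k+1}-x^\star\rangle-\frac{\mu}{2}\|x_{k+1}-x^\star\|^2$, and Young's inequality, converting between Euclidean and local dual norms via the factor $L/\mu$, should give
\begin{align*}
    f(x_{k+1})-f(x^\star)\leq\frac{L}{2\mu}\|r_{k+1}\|_{x_{k+1}}^{*2}+\eta_k.
\end{align*}
By definition $\|r_{k+1}\|^*\leq\Delta_{k+1}$. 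Also, $\eta_k\leq\omega\Delta_k^2\leq\omega\rho^{-2}\Delta_{k+1}^2$, using $\Delta_{k+1}\geq\rho\Delta_k$. Together these give $f(x_{k+1})-f(x^\star)\leq(L/(2\mu)+\omega\rho^{-2})\Delta_{k+1}^2$, which matches the constant in \eqref{ineq:global_linear_obj}. I need to check which local norm appears, but the $L/(2\mu)$ factor suggests exactly this conversion.

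The second step handles the two terms of $\Phi(x_{k+1})$ after dividing by $\Delta_{k+1}^2/\Delta_k^{(q-2)/(q-1)}$:
\begin{align*}
    \frac{\Phi(x_{k+1})\Delta_k^{(q-2)/(q-1)}}{\Delta_{k+1}^2}
    \leq\Big(\tfrac{L}{2\mu}+\omega\rho^{-2}\Big)\Delta_k^{(q-2)/(q-1)}
    +\chi\Big(\tfrac{\Delta_k}{\Delta_{k+1}}\Big)^{(q-2)/(q-1)}.
\end{align*}
Here the $\chi$ term uses $\Delta_{k+1}^{q/(q-1)}/\Delta_{k+1}^2=\Delta_{k+1}^{-(q-2)/(q-1)}$, and is at most $\chi\rho^{-(q-2)/(q-1)}$. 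For the first term, $\Delta_k^{q/(q-1)}\leq\Phi(x_k)/\chi\leq\Phi(x_0)/\chi$ by monotonicity of $\Phi$ (from \eqref{ineq:potential_decrease}). This gives $\Delta_k^{(q-2)/(q-1)}\leq(\Phi(x_0)/\chi)^{(q-2)/q}$.

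Multiplying through by $\rho^{(q-2)/(q-1)}$ gives $M\rho^{(q-2)/(q-1)}\leq\chi+(L/(2\mu)+\omega\rho^{-2})(\Phi(x_0)/\chi)^{(q-2)/q}$, using $\rho<1$ for the first term. Therefore $c_\chi/M\geq c_\chi\rho^{(q-2)/(q-1)}/[\cdots]$, which is exactly the stated rate.

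The main obstacle is the first step. It requires turning the inexact, constrained optimality condition into a primal-gap bound in terms of $r_{k+1}$ in the correct local norm, with the $\eta_k$ slack absorbed. I would first try the three-point argument sketched above. If the norm at $x_{k+1}$ versus $x_k$ causes trouble, I would fall back on the uniform Hessian bounds, which can only change the constant.
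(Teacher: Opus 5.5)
Your proposal is correct and follows essentially the same route as the paper. The paper likewise bounds $f(x_{k+1})-f(x^\star)\leq\frac{L}{2\mu}\|r_{k+1}\|_{x_{k+1}}^{*2}+\eta_k$ via strong convexity, Cauchy--Schwarz and Young (its objective-error lemma), and this local norm at $x_{k+1}$ is exactly the one in $\Delta_{k+1}$, so no fallback is needed. It then absorbs $\eta_k\leq\omega\rho^{-2}\Delta_{k+1}^2$, controls $\Delta^{(q-2)/(q-1)}$ through $\Phi\leq\Phi(x_0)$, and combines with the potential decrease.

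The only difference is the order of bounding. The paper first bounds $\Phi(x_{k+1})$ by a multiple of $\Delta_{k+1}^{q/(q-1)}$ and then uses $\Delta_{k+1}^2/\Delta_k^{(q-2)/(q-1)}\geq\rho^{(q-2)/(q-1)}\Delta_{k+1}^{q/(q-1)}$. You instead divide directly by the decrease quantity and bound $\Delta_k$, which gives a marginally sharper constant that you then relax to the stated one.
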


\subsection{Local Convergence}\label{subsec:local-convergence}

We now establish local convergence of \cref{alg:dnfw,alg:dnfw_away} with Q-order at least $1+\nu$ for $p=2$ and Q-quadratic convergence for $p=3$.
To combine global convergence with fast local convergence, we use the FW-gap-based condition below to switch to full steps after finitely many iterations, rather than merely relying on $\alpha_k\to1$.
Upon switching, we stop~\cref{alg:residual_backtracking}, set $\alpha_k=1$, and adopt the following inner stopping rule.
When the FW gap $g_k>0$, which implies $\|\nabla f(x_k)\|>0$, define $M_t\coloneq f(x_k)-\Psi_1(u_t;x_k)$.
For any $a>0$, define the stopping iteration by
\begin{align}
    \ell_k(a)\coloneq\min\left\{t\geq0\ \middle|\ G_t\leq\left(\frac{M_t}{\|\nabla f(x_k)\|}\right)^a\right\}. \label{eq:model_decrease_stopping}
\end{align}
This yields local convergence with Q-order at least $1+\nu$ for both the iterates and the primal gap when $p=2$, and local Q-quadratic convergence of both quantities when $p=3$.
In both cases, the resulting local iteration complexity is $\O(\log\log\epsilon^{-1})$.
More precisely, when $p=2$, the local iteration complexity is $\O(\log\log\epsilon^{-1}/\log(1+\nu))$, which differs only by a constant factor for fixed $\nu > 0$.
The following lemma establishes finite termination of the inner loop and bounds the error of its output.
Its proof is given in \cref{appendix:proof-finite_inner_iterations_error_estimate}.

\begin{lemma}[Finite termination and error estimate]\label[lemma]{lemma:finite_inner_iterations_error_estimate}
    Suppose that $g_k>0$ and $\alpha_k=1$.
    Then, for any $a>0$, the stopping iteration $\ell_k(a)$ in \eqref{eq:model_decrease_stopping} is finite.
    Moreover, the following inequalities hold:
    \begin{align}
        G_{\ell_k(a)}&\leq\|\widehat x_{k+1}-x_k\|^a, \label{ineq:model_decrease_gap_bound}\\
        \|u_{\ell_k(a)}-\widehat x_{k+1}\|&\leq\sqrt{\frac{2}{\mu}}\|\widehat x_{k+1}-x_k\|^{a/2}, \label{ineq:model_decrease_inner_error}
    \end{align}
    where $\widehat x_{k+1}$ is the exact minimizer of $\Psi_1(\cdot;x_k)$ over $\X$.
\end{lemma}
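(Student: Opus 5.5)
Our plan uses three ingredients: the convergence $G_t\to0$ of the inner FW iterates (Algorithm~\ref{alg:fw} with exact line search on the $\mu$-strongly convex quadratic $\Psi_1(\cdot;x_k)$), a lower bound on the model decrease $M_t$ in terms of $\|\widehat x_{k+1}-x_k\|$, and the strong-convexity error bound $\Psi_1(u)-\Psi_1(\widehat x_{k+1})\leq G(u)$.

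\emph{Step 1 (inner convergence).} Since $\Psi_1(\cdot;x_k)$ is a quadratic with Hessian $\nabla^2 f(x_k)$ and $\X$ is compact, the argument behind \cref{proposition:lmo_complexity} (with $\alpha_k=1$) shows that $\min_{s\le t}G_s\le 24D^2/t$. In fact, the analysis gives $G_t\to0$ along a subsequence, with monotone decrease of $\Psi_1(u_t;x_k)$. Because $\Psi_1(u_t;x_k)$ is nonincreasing, $M_t$ is nondecreasing in $t$.

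\emph{Step 2 (positive model decrease).} Since $g_k>0$, $x_k$ is not optimal for $\Psi_1(\cdot;x_k)$, whose gradient at $x_k$ equals $\nabla f(x_k)$; hence $\widehat x_{k+1}\neq x_k$. Optimality of $\widehat x_{k+1}$ gives $\langle\nabla\Psi_1(\widehat x_{k+1}),x_k-\widehat x_{k+1}\rangle\ge0$. Combined with $\mu$-strong convexity (Hessian $\succeq\mu I$ on $\X$, via \cref{lemma:uniform_hessian_bounds}), this yields
\[
M^\star\coloneq f(x_k)-\Psi_1(\widehat x_{k+1};x_k)\ge\tfrac{\mu}{2}\|\widehat x_{k+1}-x_k\|^2>0 .
\]
We then need a lower bound of $M_t/\|\nabla f(x_k)\|$ by $\|\widehat x_{k+1}-x_k\|$ (possibly a power of it) that produces exactly \eqref{ineq:model_decrease_gap_bound}. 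A natural route is $M_t\ge M^\star-G_t$. For finiteness, take $t$ large along the subsequence with $G_t\to0$: then $M_t\to M^\star>0$, so the right-hand side of \eqref{eq:model_decrease_stopping} stays bounded below by $(M^\star/(2\|\nabla f(x_k)\|))^a>0$ while $G_t\to0$. Hence $\ell_k(a)<\infty$.

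\emph{Step 3 (gap bound).} To obtain \eqref{ineq:model_decrease_gap_bound}, we show $M_t\le\|\nabla f(x_k)\|\,\|\widehat x_{k+1}-x_k\|$. Indeed,
\[
M_t\le M^\star=-\langle\nabla f(x_k),\widehat x_{k+1}-x_k\rangle-\tfrac12\|\widehat x_{k+1}-x_k\|_{x_k}^2\le\|\nabla f(x_k)\|\,\|\widehat x_{k+1}-x_k\|
\]
by Cauchy--Schwarz. Thus $G_{\ell}\le (M_\ell/\|\nabla f(x_k)\|)^a\le\|\widehat x_{k+1}-x_k\|^a$. So this direction is an upper bound, which is easy; the lower bound from Step 2 is needed only for finiteness.

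\emph{Step 4 (iterate error).} By strong convexity and optimality of $\widehat x_{k+1}$, together with convexity,
\[
\tfrac{\mu}{2}\|u_\ell-\widehat x_{k+1}\|^2\le\Psi_1(u_\ell)-\Psi_1(\widehat x_{k+1})\le\langle\nabla\Psi_1(u_\ell),u_\ell-\widehat x_{k+1}\rangle\le G_\ell .
\]
Combining with Step 3 gives \eqref{ineq:model_decrease_inner_error}.

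The main obstacle is Step 2's finiteness argument: it requires that $G_t$ actually becomes small at some $t$ where $M_t$ is already close to $M^\star$. We handle this by using the monotonicity of $M_t$ together with the bound $M^\star-M_t\le G_t$, so that any $t$ with $G_t\le\varepsilon$ also has $M_t\ge M^\star-\varepsilon$.
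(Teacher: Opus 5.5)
Your proof is correct and follows the paper's argument essentially step for step: exact line search gives $0\le M_t\le M_k^*$ and $M_k^*-M_t\le G_t$; a subsequence with $G_t\to0$ together with $M_k^*>0$ gives finiteness; and $M_t\le M_k^*\le\|\nabla f(x_k)\|\|\widehat x_{k+1}-x_k\|$ plus $\frac{\mu}{2}\|u_\ell-\widehat x_{k+1}\|^2\le G_\ell$ gives both bounds. Your $\widehat x_{k+1}\neq x_k$ route to $M_k^*>0$ is an equally valid substitute for the paper's descent-direction argument. The one thing you leave out is the away-step inner loop of \cref{alg:dnfw_away}, to which the lemma also applies. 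There the $O(1/t)$ gap bound of \cref{proposition:lmo_complexity} is unavailable, and the paper instead obtains $\liminf_{t\to\infty}G_t=0$ from the argument in the proof of \cref{corollary:residual_backtracking}: while $G_t>\eta$, each non-drop step decreases the model by at least $\min\{\eta^2/(2D_k^2),\eta/2\}$, and the number of drop steps is at most the number of FW steps plus the initial active-set size minus one. After that, your Steps 2--4 go through unchanged.
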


For $p=2$, let $K_2$ be the switching iteration defined in \eqref{eq:p2_model_decrease_switch}.
For $k<K_2$, we use~\cref{alg:residual_backtracking} with the accuracy \eqref{eq:global_inner_accuracy}.
For $k\geq K_2$, if $g_k=0$, the algorithm terminates; otherwise, we set $\alpha_k=1$, run the inner method until \eqref{eq:model_decrease_stopping} with $a=2(1+\nu)$ is satisfied, and set $x_{k+1}\coloneq u_{\ell_k(2(1+\nu))}$.
We now establish the local convergence of \cref{alg:dnfw,alg:dnfw_away} when $p=2$ and $\nu\in(0,1]$. Its proof is given in \cref{appendix:proof-local_convergence_p2}.

\begin{theorem}[Local convergence with Q-order at least $1+\nu$]\label{theorem:local_convergence_p2}
    Suppose that \cref{assumption:holder} holds with $p=2$ and $\nu\in(0,1]$ and that $\omega$ satisfies~\eqref{ineq:omega_condition}.
    Let the switching iteration $K_2$ be defined as
    \begin{align}
        K_2\coloneq\min\left\{k\geq0\mathrel{}\middle|\mathrel{}g_k\leq\frac{\mu}{2}(2C_2)^{-\frac{2}{\nu}}\right\}, \label{eq:p2_model_decrease_switch}
    \end{align}
    where $C_2\coloneq\frac{L_{2,\nu}L^{(1+\nu)/2}}{(1+\nu)\sqrt{\mu}}+\sqrt{\frac{2}{\mu}}\left(1+\frac{L_{2,\nu}L^{(1+\nu)/2}D^\nu}{(1+\nu)\mu^{(1+\nu)/2}}\right)^{1+\nu}$.
    Then, $K_2$ is finite, and either the sequence reaches $x^\star$ in finitely many iterations, or
    \begin{align}
        \|x_{k+1}-x^\star\|\leq C_2\|x_k-x^\star\|^{1+\nu} \label{ineq:p2_model_decrease_local_order}
    \end{align}
    holds for every $k\geq K_2$.
    Moreover, in the latter case, there exists a constant $\widetilde C_2>0$ such that
    \begin{align}
        f(x_{k+1})-f(x^\star)\leq\widetilde C_2\left(f(x_k)-f(x^\star)\right)^{1+\nu} \label{ineq:p2_model_decrease_local_objective}
    \end{align}
    holds for all $k\geq K_2$, yielding convergence of both the iterates and primal gap with Q-order at least $1+\nu$ (Q-quadratic when $\nu=1$).
\end{theorem}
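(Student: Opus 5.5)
The plan has four steps: finiteness of $K_2$, a per-step error decomposition, a neighborhood argument, and conversion to the primal gap.

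\textbf{Step 1: $K_2$ is finite.} By \cref{theorem:global_linear}, $f(x_k)-f(x^\star)\to0$. By $\mu$-strong convexity (\cref{lemma:uniform_hessian_bounds}), this gives $\|x_k-x^\star\|\to0$. Since $\X$ is compact and $\nabla f$ is Lipschitz on $\X$, we have
\[
g_k\le\langle\nabla f(x_k)-\nabla f(x^\star),x_k-v\rangle+\langle\nabla f(x^\star),x_k-x^\star\rangle+\langle\nabla f(x^\star),x^\star-v\rangle ,
\]
and the last term is nonpositive. Hence $g_k\le(Ld_{\X}+\|\nabla f(x^\star)\|)\|x_k-x^\star\|\to0$, so the threshold defining $K_2$ is eventually met.

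I will also use the bound $\frac{\mu}{2}\|x_k-x^\star\|^2\le f(x_k)-f(x^\star)\le g_k$. At $k=K_2$ it gives $\|x_k-x^\star\|\le(2C_2)^{-1/\nu}$, so $C_2\|x_k-x^\star\|^\nu\le 1/2$.

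\textbf{Step 2: one-step estimate for $k\ge K_2$.} Split
\[
\|x_{k+1}-x^\star\|\le\|x_{k+1}-\widehat x_{k+1}\|+\|\widehat x_{k+1}-x^\star\|.
\]
\emph{Exact constrained Newton step.} Write the optimality conditions of $\widehat x_{k+1}$ for $\Psi_1(\cdot;x_k)$ and of $x^\star$ for $f$ over $\X$. Adding the two variational inequalities gives
\[
\|\widehat x_{k+1}-x^\star\|_{x_k}^2\le\langle\nabla f(x^\star)-\nabla f(x_k)-\nabla^2f(x_k)(x^\star-x_k),\widehat x_{k+1}-x^\star\rangle .
\]
The Taylor remainder under \cref{assumption:holder} ($p=2$) is at most $\frac{L_{2,\nu}}{1+\nu}\|x^\star-x_k\|_{x_k}^{1+\nu}$ in $\|\cdot\|_{x_k}^*$. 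Converting between local and Euclidean norms with $\mu,L$ yields
\[
\|\widehat x_{k+1}-x^\star\|\le\frac{L_{2,\nu}L^{(1+\nu)/2}}{(1+\nu)\sqrt\mu}\|x_k-x^\star\|^{1+\nu},
\]
which is the first term of $C_2$.

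\emph{Inexactness.} By \cref{lemma:finite_inner_iterations_error_estimate} with $a=2(1+\nu)$,
\[
\|x_{k+1}-\widehat x_{k+1}\|\le\sqrt{2/\mu}\,\|\widehat x_{k+1}-x_k\|^{1+\nu}.
\]
Then $\|\widehat x_{k+1}-x_k\|\le\|x_k-x^\star\|+\|\widehat x_{k+1}-x^\star\|$. Bounding the second summand by the previous estimate with one factor $\|x_k-x^\star\|^\nu$ controlled by the diameter ($\|x_k-x^\star\|_{x_k}\le D$), we get
\[
\|\widehat x_{k+1}-x_k\|\le\Bigl(1+\frac{L_{2,\nu}L^{(1+\nu)/2}D^\nu}{(1+\nu)\mu^{(1+\nu)/2}}\Bigr)\|x_k-x^\star\|.
\]
Raising this to the power $1+\nu$ gives the second term of $C_2$, and hence \eqref{ineq:p2_model_decrease_local_order}. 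If $g_k=0$ at some step, the method terminates at $x^\star$.

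\textbf{Step 3: staying in the neighborhood.} This is the main obstacle. The switch is triggered by the FW gap, but the contraction needs $C_2\|x_k-x^\star\|^\nu\le1/2$ at every $k\ge K_2$. By induction, Step 1 gives this at $K_2$, and \eqref{ineq:p2_model_decrease_local_order} yields
\[
\|x_{k+1}-x^\star\|\le\tfrac12\|x_k-x^\star\|,
\]
so the condition propagates to all later iterates. Thus the estimate holds for every $k\ge K_2$, with no need to recheck $g_k$.

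\textbf{Step 4: primal gap.} Use
\[
f(x_{k+1})-f(x^\star)\le g_{k+1}\le(Ld_{\X}+\|\nabla f(x^\star)\|)\|x_{k+1}-x^\star\| .
\]
Combine it with \eqref{ineq:p2_model_decrease_local_order} and $\|x_k-x^\star\|\le(2(f(x_k)-f(x^\star))/\mu)^{1/2}$. This gives only exponent $(1+\nu)/2$, so the first-order bound is too weak. Instead I would use a quadratic upper bound valid near a constrained optimum: $f(x_{k+1})-f(x^\star)\le\langle\nabla f(x^\star),x_{k+1}-x^\star\rangle+\frac L2\|x_{k+1}-x^\star\|^2$.

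The linear term must be handled by the structure of the exact step. Write
\[
\langle\nabla f(x^\star),x_{k+1}-x^\star\rangle=\langle\nabla f(x^\star),x_{k+1}-\widehat x_{k+1}\rangle+\langle\nabla f(x^\star),\widehat x_{k+1}-x^\star\rangle .
\]
Compare $\nabla f(x^\star)$ with $\nabla\Psi_1(\widehat x_{k+1};x_k)$, using the optimality of $\widehat x_{k+1}$ and the gap bound $G\le\|\widehat x_{k+1}-x_k\|^{2(1+\nu)}$ from \eqref{ineq:model_decrease_gap_bound}. The aim is a bound of order $\|x_k-x^\star\|^{2(1+\nu)}$. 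With $\|x_k-x^\star\|^2\le\frac2\mu(f(x_k)-f(x^\star))$, this gives \eqref{ineq:p2_model_decrease_local_objective} with some $\widetilde C_2$. This linear-term control is where I expect the most care to be needed.
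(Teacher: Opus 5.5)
Your Steps 1--3 match the paper's proof: the same variational-inequality comparison of $\widehat x_{k+1}$ with $x^\star$, the same use of \eqref{ineq:model_decrease_inner_error} with $a=2(1+\nu)$ and the $D^\nu$ bound, and the same induction from $C_2\|x_{K_2}-x^\star\|^\nu\le\frac12$. Your explicit Lipschitz bound on $g_k$ simply replaces the paper's continuity argument. Step 4 takes a different route. It does close with exactly the ingredients you name, but you left the key estimate unwritten. With $e_k\coloneq x_k-x^\star$ and $\widehat g\coloneq\nabla\Psi_1(\widehat x_{k+1};x_k)$, split
\begin{align*}
\langle\nabla f(x^\star),x_{k+1}-x^\star\rangle
=\langle\widehat g,x_{k+1}-\widehat x_{k+1}\rangle+\langle\widehat g,\widehat x_{k+1}-x^\star\rangle+\langle\nabla f(x^\star)-\widehat g,x_{k+1}-x^\star\rangle .
\end{align*}
Each term is handled as follows.
\begin{itemize}
\item The middle term is nonpositive by optimality of $\widehat x_{k+1}$.
\item The first term is at most $\Psi_1(x_{k+1};x_k)-\Psi_1(\widehat x_{k+1};x_k)\le G_{\ell_k(2(1+\nu))}$. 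This is $O(\|e_k\|^{2(1+\nu)})$ by \eqref{ineq:model_decrease_gap_bound} and your bound on $\|\widehat x_{k+1}-x_k\|$.
\item $\nabla f(x^\star)-\widehat g$ is the Taylor remainder of $\nabla f$ at $x_k$ toward $x^\star$ plus $\nabla^2f(x_k)(x^\star-\widehat x_{k+1})$. Both pieces are $O(\|e_k\|^{1+\nu})$, so with \eqref{ineq:p2_model_decrease_local_order} the last term is $O(\|e_k\|^{2(1+\nu)})$.
\end{itemize}
Your quadratic term $\frac L2\|x_{k+1}-x^\star\|^2$ and $\|e_k\|^2\le\frac2\mu(f(x_k)-f(x^\star))$ then finish the argument.

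The paper never meets this first-order term. It applies \cref{lemma:objective_error_residual} at $x=x_{k+1}$ with $s=s_k$ and $\eta=G_{\ell_k(2(1+\nu))}$. It then only needs the inner gap and the residual $\nabla f(x_{k+1})+s_k$. For $\alpha_k=1$ this residual is exactly the Taylor remainder of $\nabla f$ along $x_{k+1}-x_k$, hence $O(\|e_k\|^{1+\nu})$ via $\|x_{k+1}-x_k\|\le\frac32\|e_k\|$. That route is shorter and reuses a lemma already needed for the global analysis. Yours expands around $x^\star$ instead, which costs an extra variational-inequality argument. In exchange it uses only the iterate estimate and global constants, so it does not need the contraction from Step 3.
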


For $p=3$, \cref{assumption:holder} and compactness of $\X$ imply that, for some $M>0$,
\begin{align}
    \|\nabla^2f(x)-\nabla^2f(y)\|_2\leq M\|x-y\|,\qquad x,y\in\X. \label{ineq:p3_hessian_lipschitz}
\end{align}
Let $K_3$ be the switching iteration defined in \eqref{eq:p3_quadratic_switch}.
For $k<K_3$, we likewise use~\cref{alg:residual_backtracking} with \eqref{eq:global_inner_accuracy}.
For $k\geq K_3$, if $g_k=0$, the algorithm terminates; otherwise, we set $\alpha_k=1$, run the inner method until \eqref{eq:model_decrease_stopping} with $a=4$ is satisfied, and set $x_{k+1}\coloneq u_{\ell_k(4)}$.
We now establish the local convergence of \cref{alg:dnfw,alg:dnfw_away} when $p=3$ and $\nu\in[0,1]$. Its proof is given in \cref{appendix:proof-local_convergence_p3}.

\begin{theorem}[Local quadratic convergence]\label{theorem:local_convergence_p3}
    Suppose that \cref{assumption:holder} holds with $p=3$ and $\nu\in[0,1]$ and that $\omega$ satisfies~\eqref{ineq:omega_condition}.
    Let the switching iteration $K_3$ be defined as
    \begin{align}
        K_3\coloneq\min\left\{k\geq0\mathrel{}\middle|\mathrel{}g_k\leq\frac{\mu}{8}C_3^{-2}\right\}, \label{eq:p3_quadratic_switch}
    \end{align}
    where $C_3\coloneq\frac{M}{2\mu}+\sqrt{\frac{2}{\mu}}\left(1+\frac{MD}{2\mu^{3/2}}\right)^2$.
    Then, $K_3$ is finite, and either the sequence reaches $x^\star$ in finitely many iterations, or
    \begin{align}
        \|x_{k+1}-x^\star\|\leq C_3\|x_k-x^\star\|^2 \label{ineq:p3_local_quadratic}
    \end{align}
    holds for every $k\geq K_3$.
    Moreover, in the latter case, there exists a constant $\widetilde C_3>0$ such that
    \begin{align}
        f(x_{k+1})-f(x^\star)\leq\widetilde C_3\left(f(x_k)-f(x^\star)\right)^2 \label{ineq:p3_local_quadratic_objective}
    \end{align}
    holds for all $k\geq K_3$, yielding Q-quadratic convergence of both the iterates and primal gap.
\end{theorem}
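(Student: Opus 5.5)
The argument has two parts. First I show that $K_3$ is finite. Then I prove a one-step contraction \eqref{ineq:p3_local_quadratic} for $k\ge K_3$ by comparing the inexact output $x_{k+1}=u_{\ell_k(4)}$ with the exact constrained Newton point $\widehat x_{k+1}$, and comparing $\widehat x_{k+1}$ with $x^\star$.

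\textbf{Finiteness of $K_3$.} \cref{theorem:global_linear} gives $f(x_k)-f(x^\star)\to0$. Uniform strong convexity ($\mu I\preceq\nabla^2 f$, \cref{lemma:uniform_hessian_bounds}) then gives $\|x_k-x^\star\|\to0$. Since $g_k=\max_{v\in\X}\langle\nabla f(x_k),x_k-v\rangle$ is continuous in $x_k$, vanishes at $x^\star$, and $\X$ is compact, we get $g_k\to0$. Hence the threshold $\frac{\mu}{8}C_3^{-2}$ is eventually met.

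\textbf{Exact step.} For $\widehat x_{k+1}$ I use the optimality conditions
\[
\langle\nabla f(x_k)+\nabla^2 f(x_k)(\widehat x_{k+1}-x_k),\,y-\widehat x_{k+1}\rangle\ge0,\qquad \langle\nabla f(x^\star),y-x^\star\rangle\ge0 .
\]
Plugging $y=x^\star$ into the first and $y=\widehat x_{k+1}$ into the second and adding gives
\[
\mu\|\widehat x_{k+1}-x^\star\|^2\le\langle\nabla f(x^\star)-\nabla f(x_k)-\nabla^2f(x_k)(x^\star-x_k),\,\widehat x_{k+1}-x^\star\rangle .
\]
The Taylor bound with \eqref{ineq:p3_hessian_lipschitz} then yields $\|\widehat x_{k+1}-x^\star\|\le\frac{M}{2\mu}\|x_k-x^\star\|^2$. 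This is the first term of $C_3$.

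\textbf{Inexact step.} By \cref{lemma:finite_inner_iterations_error_estimate} with $a=4$, $\|x_{k+1}-\widehat x_{k+1}\|\le\sqrt{2/\mu}\,\|\widehat x_{k+1}-x_k\|^2$. I bound $\|\widehat x_{k+1}-x_k\|\le\|\widehat x_{k+1}-x^\star\|+\|x_k-x^\star\|\le(1+\frac{M}{2\mu}\|x_k-x^\star\|)\|x_k-x^\star\|$. I then use a diameter bound $\|x_k-x^\star\|\le D/\sqrt{\mu}$; passing between Euclidean and local norms is where the factor $\mu^{3/2}$ appears. This gives $\|x_{k+1}-x^\star\|\le C_3\|x_k-x^\star\|^2$.

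\textbf{Staying in the region.} To keep the contraction iterating, I need $C_3\|x_k-x^\star\|\le1/2$ for all $k\ge K_3$. At $k=K_3$ this follows from the FW gap: convexity and optimality give $\frac{\mu}{2}\|x_k-x^\star\|^2\le f(x_k)-f(x^\star)\le g_k\le\frac{\mu}{8}C_3^{-2}$, so $\|x_k-x^\star\|\le\frac{1}{2C_3}$. Induction then shows the distance halves at each step and the contraction persists. Termination when $g_k=0$ covers the finite-hit case.

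\textbf{Objective rate.} For \eqref{ineq:p3_local_quadratic_objective} I use
\[
f(x_{k+1})-f(x^\star)\le\langle\nabla f(x^\star),x_{k+1}-x^\star\rangle+\frac{L}{2}\|x_{k+1}-x^\star\|^2 .
\]
Since the linear term $\langle\nabla f(x^\star),x_{k+1}-x^\star\rangle$ need not vanish on a constrained set, it is at most $\|\nabla f(x^\star)\|C_3\|x_k-x^\star\|^2$, which is linear in the distance squared. Combined with $\|x_k-x^\star\|^2\le\frac{2}{\mu}(f(x_k)-f(x^\star))$, this only gives a linear bound in the gap.

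This linear term is the main obstacle. My plan is to bound it instead by the model FW gap $G_{\ell_k(4)}\le\|\widehat x_{k+1}-x_k\|^4$ together with the variational inequality at $\widehat x_{k+1}$. Concretely, I would write
\[
f(x_{k+1})-f(x^\star)\le\langle\nabla f(x_{k+1}),x_{k+1}-x^\star\rangle
\]
and split $\nabla f(x_{k+1})=\nabla\Psi_1(x_{k+1};x_k)+\mathcal O(\|x_{k+1}-x_k\|^2)$. The model gap is $\mathcal O(\|x_k-x^\star\|^4)$. The remainder times $\|x_{k+1}-x^\star\|$ is $\mathcal O(\|x_k-x^\star\|^4)$, hence $\mathcal O((f(x_k)-f(x^\star))^2)$, which gives $\widetilde C_3$.
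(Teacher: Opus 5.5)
Your proposal is correct and follows the paper's proof essentially step for step: finiteness of $K_3$ via global linear convergence and continuity of the FW gap, the exact-step bound $\|\widehat x_{k+1}-x^\star\|\le\frac{M}{2\mu}\|x_k-x^\star\|^2$ from the two variational inequalities, the inexact-step bound from \cref{lemma:finite_inner_iterations_error_estimate} with $a=4$, and the region argument $C_3\|x_{K_3}-x^\star\|\le\frac12$. For the objective rate, the paper uses the same split of $\nabla f(x_{k+1})$ into the Taylor residual and $\nabla\Psi_1(x_{k+1};x_k)=-s_k$, but absorbs the residual term via strong convexity and Young's inequality (\cref{lemma:objective_error_residual}), whereas you bound it by Cauchy--Schwarz together with the already-proven contraction $\|x_{k+1}-x^\star\|\le C_3\|x_k-x^\star\|^2$; both routes give $\mathcal O(\|x_k-x^\star\|^4)$, and the variational inequality at $\widehat x_{k+1}$ that you mention is not actually needed in that step.
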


\section{Numerical Experiments}\label{sec:numerical-experiments}
In this section, we present numerical experiments to demonstrate the performance of our proposed methods.
All experiments were conducted in Julia 1.12 on a Mac Studio equipped with an Apple M4 Max processor and 128~GB of LPDDR5 memory.
We compared \cref{alg:dnfw,alg:dnfw_away} using residual backtracking (RBNFW; see~\cref{alg:residual_backtracking}) with the Newton FW method (NFW)~\citep{Liu2022-gs}, SOCGS~\citep{Carderera2026-pk}, the FW method with line search, the away-step FW method, CGS~\citep{Lan2016-op}, the projected gradient method (PG), and the accelerated projected gradient method (APG)~\citep{Nesterov1983-xn,Nesterov2018-gj,Beck2009-kr}.
We applied SOCGS and the away-step FW method only to polytope settings.
The descriptions in parentheses following RBNFW specify the definition of $\eta_k$.
The setting labeled \texttt{global} used the accuracy \eqref{eq:global_inner_accuracy} with $p=2$ for RBNFW.
The maximal number of inner iterations was set to 1000 for RBNFW, NFW, CGS, and SOCGS.
When an inner solve reached the cap before satisfying its prescribed accuracy condition, we accepted the last inner iterate as the next outer iterate (see \cref{tab:inner_iterations_reach} in \cref{appendix:inner_iterations_reach} for the number of outer iterations reaching this limit and its discussion).
For the settings labeled \texttt{local2} and \texttt{local3}, we computed the constants in the switching criteria \eqref{eq:p2_model_decrease_switch} and \eqref{eq:p3_quadratic_switch}, respectively, and used \eqref{eq:model_decrease_stopping} after switching, with $a=2(1+\nu)$ and $a=4$, respectively.
We set $\rho=0.625$, $\bar\theta_0 = 0.25$, $\tau = 2$, and $\omega$ to $0.99$ times the right-hand side of \eqref{ineq:omega_condition}.
In all experiments reported here, we set $\nu=1$.
All methods were set to terminate when the FW gap was below $10^{-8}$.
RBNFW, NFW, and SOCGS were limited to 50 outer updates, whereas CGS, FW, away-step FW, PG, and APG were limited to 1000 updates.
Each figure reports the primal gap versus wall-clock time (seconds), the number of iterations, and the number of LMO calls.
Because the optimal objective values for ridge-regularized logistic regression were unknown, we selected the best-performing methods, ran them for a sufficiently large number of iterations, and used the smallest objective value obtained.

\subsection{Matrix Sensing with Squared Loss}\label{subsec:squared-setting}
Let $\S^n$ ($\S_+^n$) denote real symmetric (positive semidefinite) $n\times n$ matrices.
Let $\mathcal A:\S^n\to\R^m$ be the linear operator defined by $\mathcal A(X) = (\langle A_1, X\rangle,\ldots,\langle A_m, X\rangle)^{\T}$, where $A_1,\ldots,A_m\in\R^{n\times n}$ are given sensing matrices.
We consider the matrix sensing problem with squared loss defined by $\min_{X\in\M_{R_{\tr}}}\frac{1}{2}\|\mathcal A(X)-y_{\mathrm{obs}}\|^2$, where $\M_{R_{\tr}} = \{X\in\S_+^n\mid\tr X = R_{\tr}\}$ and $y_{\mathrm{obs}}\in\R^m$ is the vector of observations.
We used a controlled synthetic construction that allowed us to independently prescribe the condition number of the quadratic objective and the alignment of the initial error with the high-curvature subspace.
This enabled us to examine the sensitivity of first- and second-order methods to ill-conditioning.
The details of the construction are given in \cref{appendix:matrix_sensing}.

We set $n = 750$, $m = n(n+1)/2$, $R_{\tr} = 1$, $\varphi = 16$, $\rank X_\star = 1$, and condition number $\Lambda = 1000$ and generated data using seed 2026.
The initial point was $X_0 = (R_{\tr}/n) I$.
We omitted SOCGS because the convergence guarantees in \citet{Carderera2026-pk} do not cover $\M_{R_{\tr}}$.
\cref{fig:matrix_sensing_align0p8} shows the primal gap for $\varpi=0.8$ (see \cref{appendix:matrix_sensing} for $\varpi$).
Results for $\varpi \in \{0.2, 0.5\}$ appear in \cref{appendix:matrix_sensing}.
RBNFW (\texttt{local2}, \texttt{local3}) met their switching criteria by $k=2$ and switched to full steps with the local inner criterion.
RBNFW variants performed best in wall-clock time.

\subsection{Ridge-Regularized Logistic Regression}\label{subsec:ridge-logistic}
Let $A\in\mathbb{R}^{m\times n}$ be a binary-classification design matrix, let $a_i^{\T}$ denote its $i$th row, and let $y_i\in\{-1,+1\}$ be the corresponding label.
We solved the constrained ridge-regularized logistic regression problem $\min_{x\in\X}\frac{1}{m}\sum_{i=1}^{m}\log(1+\exp(-y_i a_i^{\T} x))+\frac{\beta}{2}\|x\|^2$.
We consider two choices of the feasible set $\X$.
The first is the $\ell_2$-ball $\{x\in\R^n\mid\|x\|\leq R\}$ with radius $R>0$.
The second is the sparse polytope $\{x\in\R^n\mid\|x\|_1\leq \xi R_\infty, \|x\|_\infty\leq R_\infty\}$ with $R_\infty>0$ and $\xi\in\{1,\ldots,n\}$.
We used \cref{alg:dnfw} for the $\ell_2$-ball constraint and \cref{alg:dnfw_away} for the sparse polytope constraint.

We set $\beta=10^{-3}$, $R = 1$, $\xi = 10$, and $R_\infty = 1/\sqrt{10}$, and initialized at $x_0 = 0$ for the $\ell_2$-ball and at a vertex $x_0\in\argmin_{x\in\X}\langle\nabla f(0), x\rangle$ for the sparse polytope.
We used the a9a ($m=32561$, $n=123$), covtype.binary ($m=581012$, $n=54$), mushrooms ($m=8124$, $n=112$), phishing ($m=11055$, $n=68$), and w7a ($m=24692$, $n=300$)\footnote{The phishing and w7a datasets were used for sparse polytope constraints.} datasets from the LIBSVM repository~\citep{Chang2011-zh}.
\cref{fig:logistic_covtype_l2_ball,fig:logistic_covtype_polytope} show the primal gap for the covtype.binary dataset with $\ell_2$-ball and sparse polytope constraints, respectively.
\cref{fig:logistic_a9a_polytope} shows the primal gap for the a9a dataset with a sparse polytope constraint.
The RBNFW variants performed best in \cref{fig:logistic_covtype_l2_ball}.
For the sparse polytope constraint, the wall-clock performance of the RBNFW variants was nearly identical to that of SOCGS in \cref{fig:logistic_covtype_polytope}.
Results for the remaining datasets and constraints are reported in \cref{appendix:ridge_logistic_regression}.
Across these datasets, RBNFW consistently exhibited stable and strong performance.

\begin{figure}[p]
    \centering
    \includegraphics[width=\textwidth]{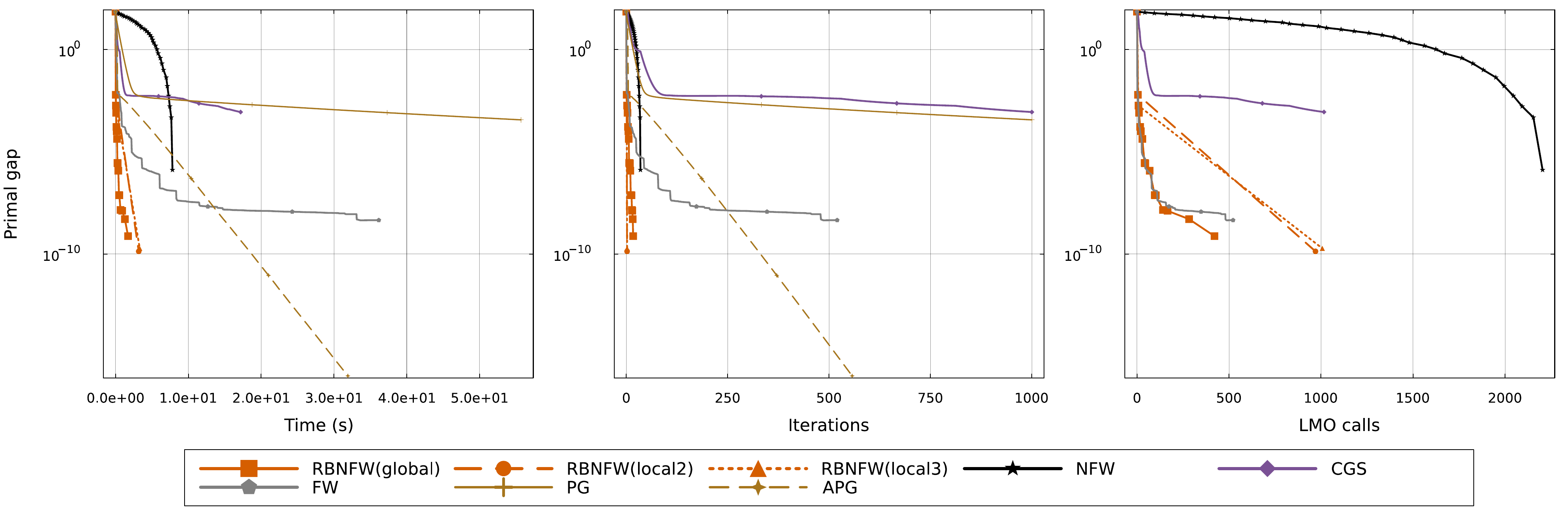}
    \caption{Primal gap for matrix sensing with $\varpi=0.8$ using \cref{alg:dnfw}.}
    \label{fig:matrix_sensing_align0p8}

    \smallskip
    \includegraphics[width=\textwidth]{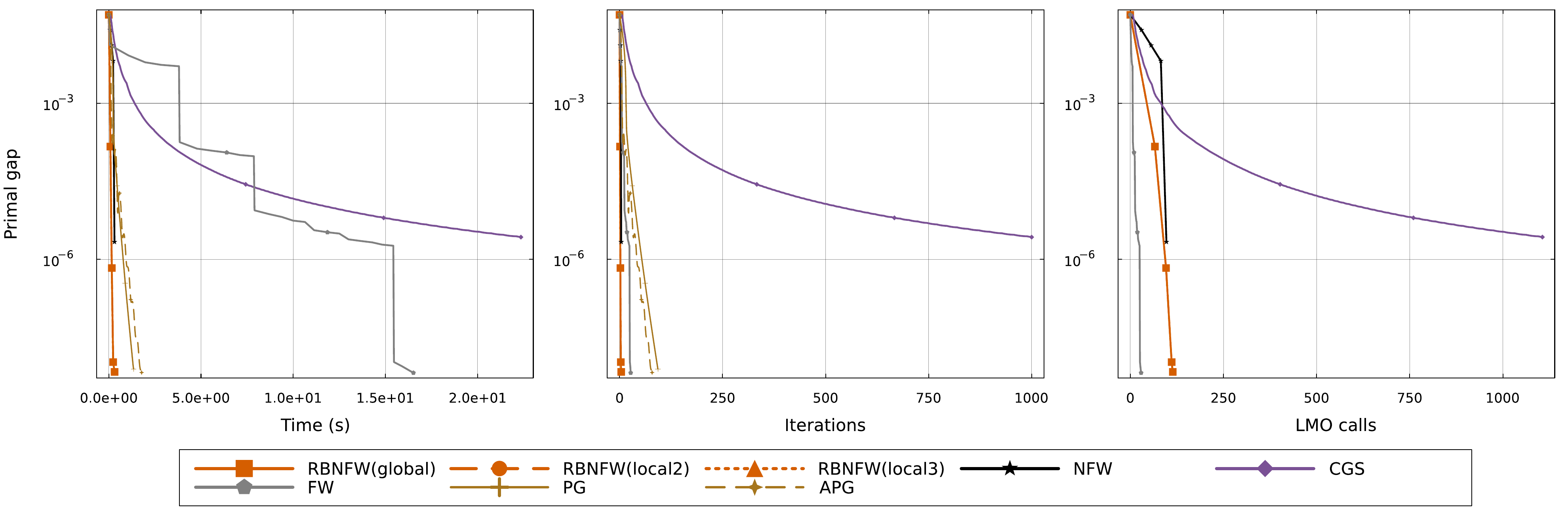}
    \caption{Primal gap for the covtype.binary dataset with an $\ell_2$-ball constraint using \cref{alg:dnfw}.}
    \label{fig:logistic_covtype_l2_ball}

    \smallskip
    \includegraphics[width=\textwidth]{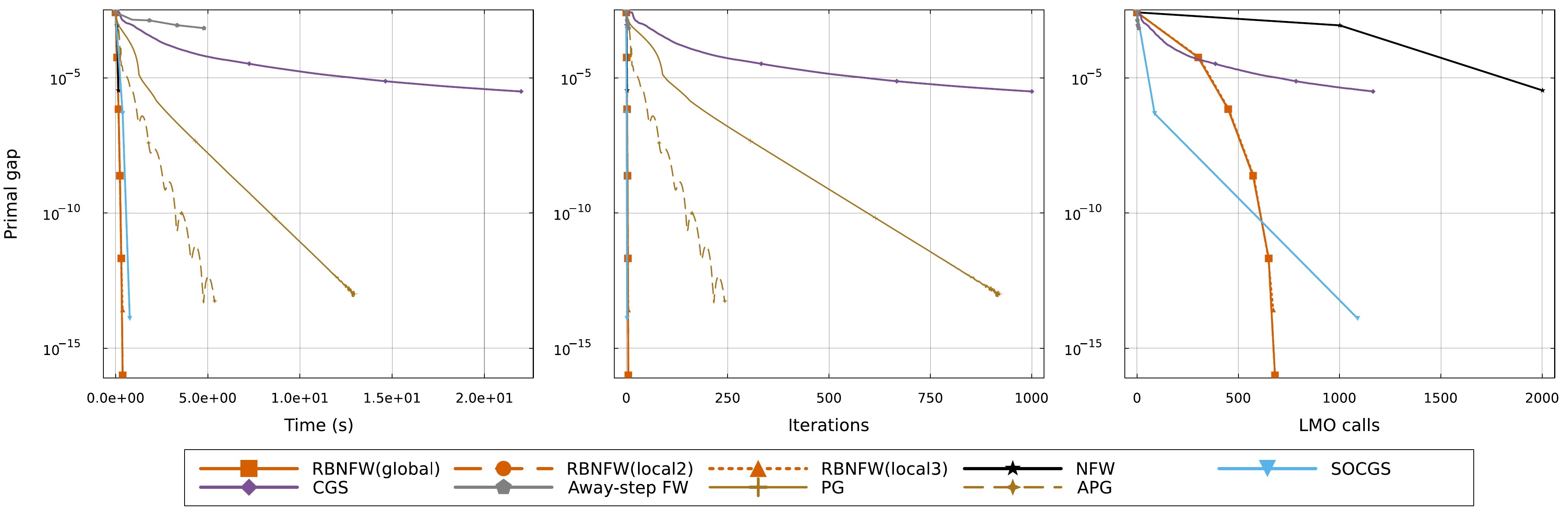}
    \caption{Primal gap for the covtype.binary dataset with a sparse polytope constraint using \cref{alg:dnfw_away}.}
    \label{fig:logistic_covtype_polytope}

    \smallskip
    \includegraphics[width=\textwidth]{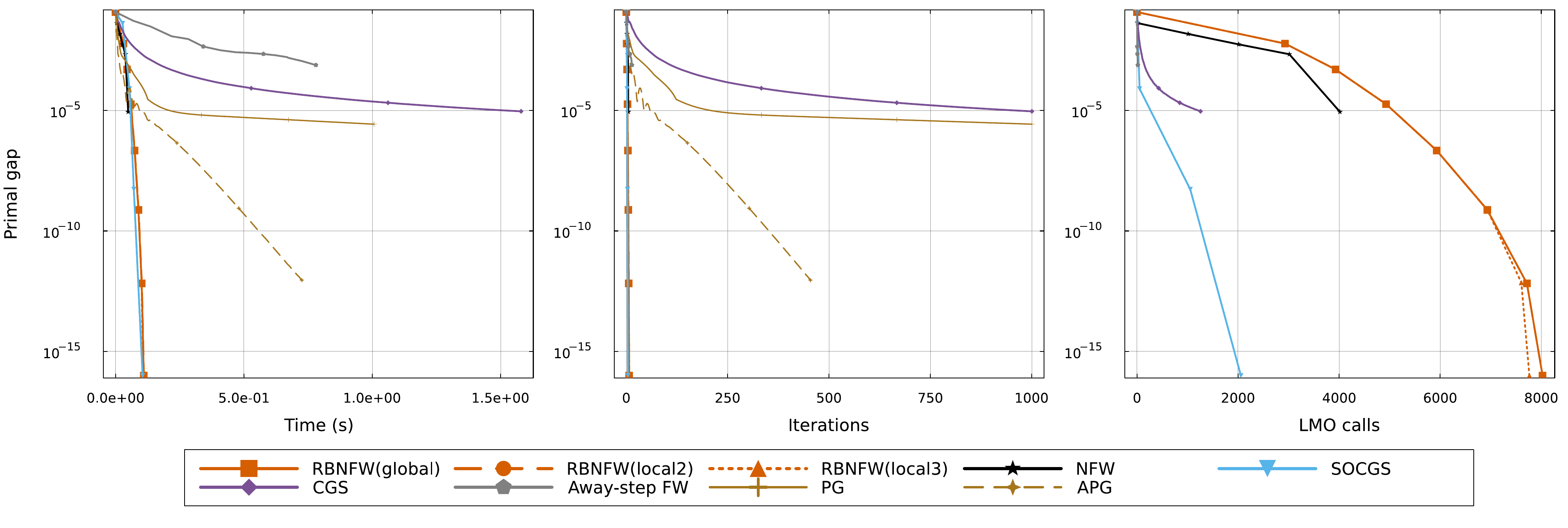}
    \caption{Primal gap for the a9a dataset with a sparse polytope constraint using \cref{alg:dnfw_away}.}
    \label{fig:logistic_a9a_polytope}
\end{figure}

\section{Conclusion and Future Work}

\paragraph{Conclusion.}

We developed damped Newton FW methods for convex optimization over compact convex sets.
Our methods approximately solve constrained damped Newton subproblems using only LMO calls, with either a standard FW or an away-step FW inner loop.
We established global linear convergence with residual backtracking.
Switching to full steps yields local superlinear or quadratic convergence.
A projection-free Newton method can combine global linear convergence with local quadratic convergence over general compact convex sets, without requiring polyhedrality, strict complementarity, or a lower bound on the primal gap for setting the inner accuracy.
In experiments on matrix sensing and ridge-regularized logistic regression, residual backtracking was consistently robust and often achieved the best wall-clock performance among the tested methods.

\paragraph{Future Work.}

Throughout this work, we form each Newton subproblem using the exact Hessian.
An important direction for future work is to accommodate approximate Hessians or to reuse the same Hessian over multiple iterations through lazy updates, thereby reducing the computational cost of constructing and solving the subproblems. 
The proposed methods also require setting at least $\omega$ and $\rho$, and the theoretically admissible range of $\omega$ depends on problem constants and on the choice of $\rho$.
A fully parameter-free method would therefore need to estimate the relevant problem constants adaptively and select the remaining algorithmic parameters automatically.

\subsubsection*{Acknowledgments}
This project has been funded by the Japan Society for the Promotion of Science (JSPS); JSPS KAKENHI Grant Number JP23K28041 and JP25K21156.

\bibliography{main}
\bibliographystyle{plainnat}

\appendix

\section{Additional Related Work}\label{appendix:additional-related-work}
\paragraph{Projection and Proximal Methods.}
For smooth constrained optimization, projected gradient methods are widely used and can be viewed as proximal gradient methods in which the nonsmooth term is the indicator function of the feasible set.
More generally, proximal gradient methods can be interpreted as forward--backward splitting schemes for minimizing the sum of a smooth function and a proximable nonsmooth function.
The operator-splitting foundations of these methods trace back to early work on monotone operators and variational inequalities~\citep{Bruck1975-et,Lions1979-id,Passty1979-sk}; see also~\citep{Combettes2005-lf} for a proximal forward--backward formulation in signal recovery.
The fast iterative shrinkage-thresholding algorithm (FISTA)~\citep{Beck2009-kr} is a widely used accelerated proximal gradient method.
Under relative smoothness, the Euclidean distance can be replaced by the Bregman distance, and the Bregman proximal gradient method can be derived in this setting~\citep{Bauschke2017-hg,Bolte2018-zt,Lu2018-ii}.
Bregman proximal methods have further been developed for difference-of-convex optimization~\citep{Takahashi2022-ml} and signal-processing applications~\citep{Takahashi2023-uh,Takahashi2026-rv}.
Approximate Bregman proximal gradient methods, which use second-order information from Bregman distances, have also been studied~\citep{Takahashi2024-ej,Fujiki2025-do}.

\paragraph{Second-Order Methods.}

Classical Newton's method can attain local quadratic convergence under standard regularity conditions while its unit-step iteration need not be globally convergent.
Common globalization mechanisms include damping, line searches, trust regions, and regularization.
For convex objectives with Lipschitz-continuous Hessians, the cubic-regularized Newton method of~\citet{Nesterov2006-cm} achieves a global rate of $\O(k^{-2})$ and admits local quadratic convergence under standard local regularity conditions.
More recent work has shown that similarly fast global guarantees can be obtained using damped or quadratically regularized Newton steps.
In particular,~\citet{Hanzely2022-pn} proposed an explicit damping schedule that achieves a global $\O(k^{-2})$ rate together with local quadratic convergence, while~\citet{Mishchenko2023-rn} established a global $\O(k^{-2})$ rate using gradient-dependent quadratic regularization.
Gradient regularization has also been extended to non-Euclidean geometries using Bregman distances and to composite convex optimization~\citep{Doikov2023-kh}.
The super-universal regularized Newton method~\citep{Doikov2024-yc} adaptively handles several H\"older smoothness classes, and~\citet{Hanzely2026-kv} developed stepsize schedules and universal backtracking strategies for Newton methods under H\"older continuity of the Hessian or the third derivative.
The latter work provides the main unconstrained motivation for the stepsize strategies developed in this paper.
For composite optimization, proximal Newton methods minimize a quadratic model of the smooth term together with the nonsmooth term and preserve Newton-type local convergence with inexact subproblem solutions under suitable regularity and inner accuracy conditions~\citep{Lee2014-zv}.
This framework includes convex constraints by taking the nonsmooth term to be the indicator function of the feasible set.
Our contribution is an LMO-based framework that combines computable FW-gap-based inner stopping criteria, residual backtracking, and a switch to full steps after finitely many iterations to establish global linear and fast local convergence over general compact convex sets.
Contracting proximal methods instead minimize contracted objective models augmented by Bregman regularization and allow inexact solutions of the resulting auxiliary subproblems~\citep{Doikov2020-zi}.
Closely related to our setting, the affine-invariant contracting-point framework of~\citet{Doikov2023-eo} recovers the FW method in its first-order instance and yields an inexact Contracting Newton method in its second-order instance.
This framework applies to bounded domains and admits an implementation based on LMOs, but its globalization mechanism contracts the domain, whereas our framework damps the quadratic model.
In addition to NFW and SOCGS,~\citet{Goncalves2017-me} proposed the Newton conditional gradient method, and \citet{Wang2025-gs} proposed the cubic-regularized Newton method for self-concordant finite-mixture objectives with polyhedral constraints.
Their cubic subproblems are solved using away-step FW methods, and their analysis establishes a global $\O(k^{-2})$ rate and local quadratic convergence.
Another line of research reduces the cost of second-order information by reusing or approximating the Hessian.
\citet{Doikov2023-vt} proposed lazy Hessian updates that reuse the same Hessian over several iterations, while~\citet{Semenov2026-ve} developed gradient-regularized methods with approximate Hessians.

\section{Proofs of Section~\ref{sec:proposed-method}}\label{appendix:proofs-proposed-method}
\subsection{Proof of Proposition~\ref{proposition:lmo_complexity}}\label{appendix:proof-lmo_complexity}
\begin{proof}[Proof of \cref{proposition:lmo_complexity}]
    If $D=0$, then $\X$ is a singleton and no inner iteration is required.
    Hence, we assume that $D>0$.

    For any $u,v\in\X$ and $\gamma\in[0,1]$, we have
    \begin{align*}
        \Psi_{\alpha_k}(u+\gamma(v-u);x_k)
        &=\Psi_{\alpha_k}(u;x_k)+\gamma\left\langle\nabla\Psi_{\alpha_k}(u;x_k),v-u\right\rangle+\frac{\gamma^2}{2\alpha_k}\|v-u\|_{x_k}^2\\
        &\leq\Psi_{\alpha_k}(u;x_k)+\gamma\left\langle\nabla\Psi_{\alpha_k}(u;x_k),v-u\right\rangle+\frac{2\gamma^2D^2}{\alpha_k},
    \end{align*}
    where we used
    \begin{align*}
        \|v-u\|_{x_k}\leq\|v-x_k\|_{x_k}+\|u-x_k\|_{x_k}\leq2D.
    \end{align*}
    Since the stepsize $\gamma_t$ in \cref{alg:fw} exactly minimizes $\Psi_{\alpha_k}(u;x_k)$ along the segment joining $u_t$ and $v_t$, for any $\gamma\in[0,1]$,
    \begin{align}
        \Psi_{\alpha_k}(u_{t+1};x_k)\leq\Psi_{\alpha_k}(u_t;x_k)-\gamma G_t+\frac{2\gamma^2D^2}{\alpha_k}, \label{ineq:fw_lmo_descent}
    \end{align}
    where $G_t$ is the FW gap of $\Psi_{\alpha_k}(\cdot;x_k)$ over $\X$ at $u_t$ (see \eqref{ineq:fw_termination}).
    Moreover, the convexity of $\Psi_{\alpha_k}(\cdot;x_k)$ gives
    \begin{align}
        \Psi_{\alpha_k}(u_t;x_k)-\min_{u\in\X}\Psi_{\alpha_k}(u;x_k)\leq G_t. \label{ineq:fw_primal_gap_fw_gap}
    \end{align}

    Suppose that \cref{alg:fw} does not terminate at $t=0$.
    Substituting the comparison value $\gamma=1$ into \eqref{ineq:fw_lmo_descent} and using \eqref{ineq:fw_primal_gap_fw_gap}, we obtain
    \begin{align}
        \Psi_{\alpha_k}(u_1;x_k)-\min_{u\in\X}\Psi_{\alpha_k}(u;x_k)\leq\frac{2D^2}{\alpha_k}. \label{ineq:fw_first_primal_gap}
    \end{align}

    We next show that, for every $t\geq1$,
    \begin{align}
        \Psi_{\alpha_k}(u_t;x_k)-\min_{u\in\X}\Psi_{\alpha_k}(u;x_k)\leq\frac{8D^2}{\alpha_k(t+1)}. \label{ineq:fw_primal_gap_rate}
    \end{align}
    The case $t=1$ follows from \eqref{ineq:fw_first_primal_gap}.
    Suppose that \eqref{ineq:fw_primal_gap_rate} holds for some $t\geq1$.
    Substituting the comparison value $\gamma=2/(t+2)$ into \eqref{ineq:fw_lmo_descent} and using \eqref{ineq:fw_primal_gap_fw_gap}, we obtain
    \begin{align*}
        \Psi_{\alpha_k}(u_{t+1};x_k)-\min_{u\in\X}\Psi_{\alpha_k}(u;x_k)
        &=\min_{\gamma\in[0,1]}\Psi_{\alpha_k}((1-\gamma)u_t+\gamma v_t;x_k)-\min_{u\in\X}\Psi_{\alpha_k}(u;x_k)\\
        &\leq\Psi_{\alpha_k}\left(\left(1-\frac{2}{t+2}\right)u_t+\frac{2}{t+2}v_t;x_k\right)-\min_{u\in\X}\Psi_{\alpha_k}(u;x_k)\\
        &=\Psi_{\alpha_k}(u_t;x_k)-\min_{u\in\X}\Psi_{\alpha_k}(u;x_k)-\frac{2}{t+2}G_t\\
        &\quad+\frac{2}{\alpha_k(t+2)^2}\|v_t-u_t\|_{x_k}^2\\
        &\leqby{\eqref{ineq:fw_primal_gap_fw_gap}}\frac{t}{t+2}\left(\Psi_{\alpha_k}(u_t;x_k)-\min_{u\in\X}\Psi_{\alpha_k}(u;x_k)\right)\\
        &\quad+\frac{2}{\alpha_k(t+2)^2}\|v_t-u_t\|_{x_k}^2\\
        &\leq\frac{t}{t+2}\frac{8D^2}{\alpha_k(t+1)}+\frac{8D^2}{\alpha_k(t+2)^2}\leq\frac{8D^2}{\alpha_k(t+2)}.
    \end{align*}
    The last inequality follows from
    \begin{align*}
        \frac{t}{(t+1)(t+2)}+\frac{1}{(t+2)^2}\leq\frac{1}{t+2}.
    \end{align*}
    Therefore, \eqref{ineq:fw_primal_gap_rate} follows by induction.

    Let $U\geq1$ be an integer.
    If \cref{alg:fw} terminates within the first $U > 0$ inner updates, then the conclusion already holds.
    Hence, suppose that $u_1,\dots,u_{U+1}$ are generated.
    Setting $\gamma=2/(t+2)$ in \eqref{ineq:fw_lmo_descent}, for every $t=1,\dots,U$, we obtain
    \begin{align*}
        \frac{2}{t+2}G_t\leq\Psi_{\alpha_k}(u_t;x_k)-\Psi_{\alpha_k}(u_{t+1};x_k)+\frac{8D^2}{\alpha_k(t+2)^2}.
    \end{align*}
    Multiplying both sides by $(t+1)(t+2)/2$ and summing over $t=1,\dots,U$, we obtain
    \begin{align*}
        \sum_{t=1}^U(t+1)G_t
        &\leq\sum_{t=1}^U\frac{(t+1)(t+2)}{2}\left(\Psi_{\alpha_k}(u_t;x_k)-\Psi_{\alpha_k}(u_{t+1};x_k)\right)+\frac{4D^2}{\alpha_k}\sum_{t=1}^U\frac{t+1}{t+2}\\
        &=3\left(\Psi_{\alpha_k}(u_1;x_k)-\min_{u\in\X}\Psi_{\alpha_k}(u;x_k)\right)\\
        &\quad+\sum_{t=2}^U\left(\frac{(t+1)(t+2)}{2} - \frac{t(t+1)}{2}\right)\left(\Psi_{\alpha_k}(u_t;x_k)-\min_{u\in\X}\Psi_{\alpha_k}(u;x_k)\right)\\
        &\quad-\frac{(U+1)(U+2)}{2}\left(\Psi_{\alpha_k}(u_{U+1};x_k)-\min_{u\in\X}\Psi_{\alpha_k}(u;x_k)\right)+\frac{4D^2}{\alpha_k}\sum_{t=1}^U\frac{t+1}{t+2}\\
        &=3\left(\Psi_{\alpha_k}(u_1;x_k)-\min_{u\in\X}\Psi_{\alpha_k}(u;x_k)\right)\\
        &\quad+\sum_{t=2}^U(t+1)\left(\Psi_{\alpha_k}(u_t;x_k)-\min_{u\in\X}\Psi_{\alpha_k}(u;x_k)\right)\\
        &\quad-\frac{(U+1)(U+2)}{2}\left(\Psi_{\alpha_k}(u_{U+1};x_k)-\min_{u\in\X}\Psi_{\alpha_k}(u;x_k)\right)+\frac{4D^2}{\alpha_k}\sum_{t=1}^U\frac{t+1}{t+2}\\
        &\leqby{(a)}3\frac{2D^2}{\alpha_k}+\sum_{t=2}^U(t+1)\frac{8D^2}{\alpha_k(t+1)}+\frac{4D^2}{\alpha_k}\sum_{t=1}^U\frac{t+1}{t+2}\\
        &\leq\frac{6D^2}{\alpha_k}+\frac{8(U-1)D^2}{\alpha_k}+\frac{4UD^2}{\alpha_k}=\frac{(12U-2)D^2}{\alpha_k}\leq\frac{12UD^2}{\alpha_k},
    \end{align*}
    where (a) follows from \eqref{ineq:fw_first_primal_gap} and \eqref{ineq:fw_primal_gap_rate}.
    Consequently,
    \begin{align*}
        \min_{1\leq t\leq U}G_t\frac{U(U+3)}{2}\leq\sum_{t=1}^U(t+1)G_t\leq\frac{12UD^2}{\alpha_k},
    \end{align*}
    and hence
    \begin{align}
        \min_{1\leq t\leq U}G_t\leq\frac{24D^2}{\alpha_k(U+3)}. \label{ineq:fw_min_fw_gap}
    \end{align}

    If \cref{alg:fw} does not terminate at $t=0$, set
    \begin{align*}
        U=\left\lceil\frac{24D^2}{\alpha_k\eta_k}\right\rceil.
    \end{align*}
    Then, by \eqref{ineq:fw_min_fw_gap}, there exists some $t\in\{1,\dots,U\}$ such that $G_t\leq\eta_k$.
    Therefore, \cref{alg:fw} terminates after at most $U$ inner updates.
\end{proof}

\subsection{Proof of Proposition~\ref{proposition:lmo_complexity_away}}\label{appendix:proof-lmo_complexity_away}

We present $\afw$ of \cref{alg:dnfw_away} in \cref{alg:afw}.

\begin{algorithm}[!htb]
    \caption{$\texttt{AFW}(h, H, u, \eta, \Scal, \lambda)$}\label{alg:afw}
    $u_0 \gets u$, $\Scal_0 \gets \Scal$, $\lambda_0 \gets \lambda$\;
    \For{$t=0,1,2,\dots$}{
        $v_t^{\fwv}\gets\argmin_{v\in\X}\langle h+H(u_t-u),v\rangle$\;
        \If{$\langle h + H(u_t - u), u_t - v_t^{\fwv}\rangle\leq\eta$}{
            \Return $(u_t,\Scal_t,\lambda_t)$\;
        }
        $v_t^{\away}\gets\argmax_{v\in\Scal_t}\langle h+H(u_t-u),v\rangle$\;
        \If{$\langle h + H(u_t - u), u_t - v_t^{\fwv}\rangle\geq\langle h + H(u_t - u), v_t^{\away} - u_t\rangle$}{
            $d_t \gets u_t - v_t^{\fwv}$, $\gamma_{t,\max} \gets 1$ \Comment*{FW step}
        }
        \Else{
            $d_t \gets v_t^{\away} - u_t$, $\gamma_{t,\max} \gets \frac{\lambda_{v_t^{\away}, t}}{1 - \lambda_{v_t^{\away}, t}}$ \Comment*{Away step}
        }
        $\gamma_t \gets \min\left\{\gamma_{t,\max}, \frac{\langle h + H(u_t - u), d_t \rangle}{\|d_t\|_H^2}\right\}$\label{eq:afw_exact_stepsize}\;
        $u_{t+1} \gets u_t - \gamma_t d_t$\;
        \If{$\langle h + H(u_t - u), u_t - v_t^{\fwv} \rangle \geq \langle h + H(u_t - u), v_t^{\away} - u_t\rangle$}{
            $\lambda_{v,t+1} \gets (1 - \gamma_t)\lambda_{v,t}$ for all $v \in \Scal_t\setminus\{v_t^{\fwv}\}$ \Comment*{FW step}
            $\lambda_{v_t^{\fwv}, t+1} \gets \begin{cases}
                \gamma_t & \text{if } v_t^{\fwv} \not\in \Scal_t\\
                (1 - \gamma_t)\lambda_{v_t^{\fwv},t} + \gamma_t & \text{if } v_t^{\fwv} \in \Scal_t
            \end{cases}$\;
            $\Scal_{t+1} \gets \begin{cases}
                \Scal_t \cup \{v_t^{\fwv}\} & \text{if } \gamma_t < 1\\
                \{v_t^{\fwv}\} & \text{if } \gamma_t = 1
            \end{cases}$\;
        }\Else{
            $\lambda_{v,t+1}\gets(1 + \gamma_t)\lambda_{v,t}$ for $v\in\Scal_t\setminus\{v_t^{\away}\}$ \Comment*{Away step}
            $\lambda_{v_t^{\away},t+1} \gets (1 + \gamma_t)\lambda_{v_t^{\away},t} - \gamma_t$\;
            $\Scal_{t+1} \gets \begin{cases}
                \Scal_t & \text{if } \lambda_{v_t^{\away},t+1} > 0\\
                \Scal_t\setminus\{v_t^{\away}\} & \text{if } \lambda_{v_t^{\away},t+1} = 0
            \end{cases}$\;
        }
    }
\end{algorithm}

The weight vector $\lambda_t=(\lambda_{v,t})_{v\in\Scal_t}$ satisfies $u_t=\sum_{v\in\Scal_t}\lambda_{v,t}v$, $\lambda_{v,t}>0$, and $\sum_{v\in\Scal_t}\lambda_{v,t}=1$.

\begin{proof}[Proof of \cref{proposition:lmo_complexity_away}]
    By the direction-selection rule of \cref{alg:afw}, we have
    \begin{align*}
        \langle\nabla\Psi_{\alpha_k}(u_t;x_k),d_t\rangle\geq\frac{1}{2}\langle\nabla\Psi_{\alpha_k}(u_t;x_k),v_t^{\mathrm A}-v_t^{\mathrm{FW}}\rangle.
    \end{align*}
    The geometric strong convexity inequality associated with the pyramidal width~\cite[Lemma~2.27]{Braun2025-sz} provides
    \begin{align}
        \Psi_{\alpha_k}(u_t;x_k)-\min_{u\in\X}\Psi_{\alpha_k}(u;x_k)\leq\frac{\langle\nabla\Psi_{\alpha_k}(u_t;x_k),v_t^{\mathrm A}-v_t^{\mathrm{FW}}\rangle^2}{2\mu_k\delta_{\X}^2}\leq\frac{2\langle\nabla\Psi_{\alpha_k}(u_t;x_k),d_t\rangle^2}{\mu_k\delta_{\X}^2}. \label{ineq:afw_geometric_strong_convexity}
    \end{align}
    For any $\gamma\in[0,\gamma_{t,\max}]$, the quadratic structure of $\Psi_{\alpha_k}(\cdot;x_k)$ yields
    \begin{align}
        \Psi_{\alpha_k}(u_t-\gamma d_t;x_k)=\Psi_{\alpha_k}(u_t;x_k)-\gamma\langle\nabla\Psi_{\alpha_k}(u_t;x_k),d_t\rangle+\frac{\gamma^2}{2\alpha_k}\|d_t\|_{x_k}^2. \label{eq:afw_quadratic_expansion}
    \end{align}

    Suppose first that the current update is an away step with $\gamma_t<\gamma_{t,\max}$ or an FW step with $\gamma_t<1$.
    By Line~\ref{eq:afw_exact_stepsize} of \cref{alg:afw},
    \begin{align*}
        \Psi_{\alpha_k}(u_t;x_k)-\Psi_{\alpha_k}(u_{t+1};x_k)=\frac{\alpha_k\langle\nabla\Psi_{\alpha_k}(u_t;x_k),d_t\rangle^2}{2\|d_t\|_{x_k}^2}\geq\frac{\langle\nabla\Psi_{\alpha_k}(u_t;x_k),d_t\rangle^2}{2L_kd_{\X}^2},
    \end{align*}
    where we used $\|d_t\|_{x_k}^2/\alpha_k\leq L_k\|d_t\|^2\leq L_kd_{\X}^2$. Combining this inequality with \eqref{ineq:afw_geometric_strong_convexity}, we obtain
    \begin{align*}
        \Psi_{\alpha_k}(u_t;x_k)-\Psi_{\alpha_k}(u_{t+1};x_k)\geq\frac{\mu_k\delta_{\X}^2}{4L_kd_{\X}^2}\left(\Psi_{\alpha_k}(u_t;x_k)-\min_{u\in\X}\Psi_{\alpha_k}(u;x_k)\right).
    \end{align*}

    Next, suppose that the current step is an FW step with $\gamma_t=1$. By Line~\ref{eq:afw_exact_stepsize} of \cref{alg:afw},
    \begin{align*}
        \frac{\alpha_k\langle\nabla\Psi_{\alpha_k}(u_t;x_k),d_t\rangle}{\|d_t\|_{x_k}^2}\geq1.
    \end{align*}
    Hence, \eqref{eq:afw_quadratic_expansion} implies
    \begin{align*}
        \Psi_{\alpha_k}(u_t;x_k)-\Psi_{\alpha_k}(u_{t+1};x_k)&\geq\frac{1}{2}\langle\nabla\Psi_{\alpha_k}(u_t;x_k),d_t\rangle=\frac{G_t}{2}\\
        &\geq\frac{1}{2}\left(\Psi_{\alpha_k}(u_t;x_k)-\min_{u\in\X}\Psi_{\alpha_k}(u;x_k)\right),
    \end{align*}
    where the last inequality follows from \eqref{ineq:fw_primal_gap_fw_gap} due to the convexity of $\Psi_{\alpha_k}(\cdot;x_k)$.
    Since $\mu_k\leq L_k$ and $\delta_{\X}\leq d_{\X}$, we have $\mu_k\delta_{\X}^2/(4L_kd_{\X}^2)\leq1/4$. Therefore, in either case,
    \begin{align}
        \Psi_{\alpha_k}(u_{t+1};x_k)-\min_{u\in\X}\Psi_{\alpha_k}(u;x_k)\leq\left(1-\frac{\mu_k\delta_{\X}^2}{4L_kd_{\X}^2}\right)\left(\Psi_{\alpha_k}(u_t;x_k)-\min_{u\in\X}\Psi_{\alpha_k}(u;x_k)\right). \label{ineq:afw_non_drop_contraction}
    \end{align}
    Thus, \eqref{ineq:afw_non_drop_contraction} holds for every non-drop step. For a drop step, exact line minimization gives $\Psi_{\alpha_k}(u_{t+1};x_k)\leq\Psi_{\alpha_k}(u_t;x_k)$.

    Each drop step removes one vertex from the active set, whereas each FW step adds at most one vertex. Hence, among the first $U$ inner updates, the number of drop steps is at most the number of FW steps plus $|\widehat{\Scal}_k|-1$. Since every FW step is a non-drop step, the number of non-drop steps is at least $\max\{0,\lceil(U-|\widehat{\Scal}_k|+1)/2\rceil\}$.
    Consequently,
    \begin{align}
        \Psi_{\alpha_k}(u_U;x_k)-\min_{u\in\X}\Psi_{\alpha_k}(u;x_k)
        &\leq\left(\Psi_{\alpha_k}(u_0;x_k)-\min_{u\in\X}\Psi_{\alpha_k}(u;x_k)\right)\notag\\
        &\quad\times\left(1-\frac{\mu_k\delta_{\X}^2}{4L_kd_{\X}^2}\right)^{\max\{0,\lceil(U-|\widehat{\Scal}_k|+1)/2\rceil\}}. \label{ineq:afw_primal_gap_contraction}
    \end{align}

    For every $\gamma\in[0,1]$, we have
    \begin{align*}
        \Psi_{\alpha_k}(u_t;x_k)-\min_{u\in\X}\Psi_{\alpha_k}(u;x_k)
        &\geq\Psi_{\alpha_k}(u_t;x_k)-\Psi_{\alpha_k}\bigl(u_t+\gamma(v_t^{\mathrm{FW}}-u_t);x_k\bigr)\\
        &\geq\gamma G_t-\frac{L_k\gamma^2d_{\X}^2}{2}.
    \end{align*}
    Suppose that $\Psi_{\alpha_k}(u_t;x_k)-\min_{u\in\X}\Psi_{\alpha_k}(u;x_k)\leq L_kd_{\X}^2/2$. If $G_t>L_kd_{\X}^2$, then choosing $\gamma=1$ in the preceding inequality gives a contradiction. Hence, $G_t\leq L_kd_{\X}^2$, and choosing $\gamma=G_t/(L_kd_{\X}^2)$ yields
    \begin{align*}
        G_t\leq d_{\X}\sqrt{2L_k\left(\Psi_{\alpha_k}(u_t;x_k)-\min_{u\in\X}\Psi_{\alpha_k}(u;x_k)\right)}.
    \end{align*}
    Therefore, $\Psi_{\alpha_k}(u_t;x_k)-\min_{u\in\X}\Psi_{\alpha_k}(u;x_k)\leq\zeta_k$ implies $G_t\leq d_{\X}\sqrt{2L_k\zeta_k}\leq\eta_k$.

    Finally, set
    \begin{align*}
        U\coloneq|\widehat{\Scal}_k|-1+2\left\lceil\frac{\log\max\{1,g_k/\zeta_k\}}{-\log\left(1-\mu_k\delta_{\X}^2/(4L_kd_{\X}^2)\right)}\right\rceil.
    \end{align*}
    Since $u_0=x_k$ and the quadratic term in $\Psi_{\alpha_k}(\cdot;x_k)$ is nonnegative, we have the initial model primal-gap bound as follows:
    \begin{align*}
        \Psi_{\alpha_k}(x_k;x_k)-\min_{u\in\X}\Psi_{\alpha_k}(u;x_k)\leq\max_{u\in\X}\langle\nabla f(x_k),x_k-u\rangle=g_k.
    \end{align*}
    If the inner loop has not terminated earlier, then \eqref{ineq:afw_primal_gap_contraction} and the initial model primal-gap bound give
    \begin{align*}
        \Psi_{\alpha_k}(u_U;x_k)-\min_{u\in\X}\Psi_{\alpha_k}(u;x_k)
        &\leq g_k\left(1-\frac{\mu_k\delta_{\X}^2}{4L_kd_{\X}^2}\right)^{\left\lceil\frac{\log\max\{1,g_k/\zeta_k\}}{-\log\left(1-\mu_k\delta_{\X}^2/(4L_kd_{\X}^2)\right)}\right\rceil}\leq\zeta_k.
    \end{align*}
    Hence, $G_U\leq\eta_k$, and the inner loop terminates after at most $U$ iterations.
\end{proof}

\section{Proofs of Section~\ref{subsec:global-convergence}}\label{appendix:proofs-global-convergence}

\paragraph{Convergence Terminology.}
For a nonnegative sequence $b_k\to0$, we say that it converges Q-linearly if $b_{k+1}\leq\psi b_k$ eventually for some $\psi\in(0,1)$, and Q-converges with order at least $l>1$ if $b_{k+1}\leq C b_k^l$ eventually for some $C>0$.
We say that $b_k$ converges R-linearly if $b_k\leq C\psi^k$ for some $C>0$ and $\psi\in(0,1)$.
Q-quadratic convergence means Q-order at least two.
For convergence of the iterates and the primal gap, these definitions apply to the scalar sequences $\|x_k-x^\star\|$ and $f(x_k)-f(x^\star)$, respectively.

\subsection{Proof of Lemma~\ref{lemma:q_r_relation_inexact}}\label{appendix:proof-q_r_relation_inexact}
From the definition of $w_k$, we have
\begin{align}
    x_{k+1}-x_k = -\alpha_k\nabla^2 f(x_k)^{-1}w_k,
    \qquad
    \|x_{k+1}-x_k\|_{x_k} = \alpha_k\|w_k\|_{x_k}^*. \label{eq:scaled_newton_direction}
\end{align}
\begin{proof}[Proof of~\cref{lemma:q_r_relation_inexact}]
    For $k\geq1$, applying \eqref{ineq:fw_termination} at iteration $k$ with $v=x_k$ and at iteration $k-1$ with $v=x_{k+1}$ gives
    \begin{align*}
        \langle s_k,x_k-x_{k+1}\rangle\leq\eta_k,
        \qquad
        \langle s_{k-1},x_{k+1}-x_k\rangle\leq\eta_{k-1}.
    \end{align*}
    The same argument applies when $k=0$ by setting $s_{-1}=0$ and $\eta_{-1}=0$.
    Therefore,
    \begin{align*}
        \langle s_k-s_{k-1},x_{k+1}-x_k\rangle\geq-(\eta_k+\eta_{k-1}).
    \end{align*}
    Using $s_k-s_{k-1}=w_k-r_k$ and \eqref{eq:scaled_newton_direction}, we obtain
    \begin{align*}
        \|w_k\|_{x_k}^{*2}
        &\leq\left\langle r_k,\nabla^2f(x_k)^{-1}w_k\right\rangle+\frac{\eta_k+\eta_{k-1}}{\alpha_k}\\
        &\leq\|r_k\|_{x_k}^*\|w_k\|_{x_k}^*+\frac{\eta_k+\eta_{k-1}}{\alpha_k},
    \end{align*}
    where the second inequality follows from the Cauchy--Schwarz inequality.
    This proves \eqref{ineq:q_r_inexact_basic}.

    Suppose now that \eqref{eq:global_inner_accuracy} holds at every iteration and that $\alpha_k\geq(1+B_q\Delta_k^{\frac{q-2}{q-1}})^{-1}$.
    For $k\geq1$ with $\Delta_{k-1}>0$, set $t=\Delta_k/\Delta_{k-1}\geq\rho$.
    Then
    \begin{align}
        \frac{\eta_{k-1}}{\eta_k}=\frac{\Delta_{k-1}^2(1+B_q\Delta_k^{\frac{q-2}{q-1}})}{\Delta_k^2(1+B_q\Delta_{k-1}^{\frac{q-2}{q-1}})}=t^{-2}\frac{1+B_qt^{\frac{q-2}{q-1}}\Delta_{k-1}^{\frac{q-2}{q-1}}}{1+B_q\Delta_{k-1}^{\frac{q-2}{q-1}}}\leq\rho^{-2}. \label{ineq:alpha_delta_ratio}
    \end{align}
    Indeed, the last fraction is at most $1$ when $t\in[\rho,1]$ and at most $t^{\frac{q-2}{q-1}}$ when $t\geq1$.
    In the latter case, the entire expression is at most $t^{-\frac{q}{q-1}}\leq1$.
    Thus, $\eta_{k-1}\leq\rho^{-2}\eta_k$; the same inequality holds for $k=0$ or $\Delta_{k-1}=0$, since then $\eta_{k-1}=0$.
    Consequently,
    \begin{align*}
        \frac{\eta_k+\eta_{k-1}}{\alpha_k}\leq(1+\rho^{-2})\left(1+B_q\Delta_k^{\frac{q-2}{q-1}}\right)\eta_k=\omega(1+\rho^{-2})\Delta_k^2.
    \end{align*}
    Combining this with \eqref{ineq:q_r_inexact_basic} and $\|r_k\|_{x_k}^*\leq\Delta_k$ gives
    \begin{align*}
        \|w_k\|_{x_k}^{*2}\leq\Delta_k\|w_k\|_{x_k}^*+\omega(1+\rho^{-2})\Delta_k^2.
    \end{align*}
    Solving this quadratic inequality yields $\|w_k\|_{x_k}^*\leq\kappa_\omega\Delta_k$.
    The same argument applies to each trial after replacing $(\alpha_k,x_{k+1},s_k,w_k)$ by $(\alpha_{k,j},x_{k,j},s_{k,j},w_{k,j})$, while keeping the previous accepted $s_{k-1}$ and $\eta_{k-1}$ fixed.
    In particular,
    \begin{align}
        \|w_{k,j}\|_{x_k}^{*2}\leq\|r_k\|_{x_k}^*\|w_{k,j}\|_{x_k}^*+\frac{\eta_k+\eta_{k-1}}{\alpha_{k,j}}\leq\Delta_k\|w_{k,j}\|_{x_k}^*+\omega(1+\rho^{-2})\Delta_k^2, \label{ineq:backtracking_q_bound}
    \end{align}
    because every RBNFW trial satisfies $\alpha_{k,j}\geq(1+B_q\Delta_k^{\frac{q-2}{q-1}})^{-1}$.
\end{proof}

\subsection{A Sufficient Condition for Acceptance}\label{appendix:residual_acceptance}
We now present a sufficient condition for the acceptance of a trial step in RBNFW.
\begin{proposition}\label[proposition]{proposition:residual_acceptance}
    Let $\|w_k\|_{x_k}^*>0$, $p\in\{2,3\}$, $\nu\in[0,1]$, $q=p+\nu$, and $\alpha_k=1/(1+\theta_k)$ with $\theta_k>0$.
    Suppose that \cref{assumption:holder} holds and that $x_{k+1}\in\X$ satisfies \eqref{ineq:fw_termination}.
    If
    \begin{align}
        \theta_k\geq(9L_{p,\nu})^{\frac{1}{q-1}}\|w_k\|_{x_k}^{*\frac{q-2}{q-1}}, \label{ineq:unified_theta_condition}
    \end{align}
    then
    \begin{align}
        \langle\nabla f(x_{k+1})+s_k,\nabla^2f(x_k)^{-1}w_k\rangle\geq\frac{c_p}{\alpha_k\theta_k}\|\nabla f(x_{k+1})+s_k\|_{x_k}^{*2}. \label{ineq:prod_res_dir_qk}
    \end{align}
\end{proposition}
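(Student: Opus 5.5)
The plan is to reduce \eqref{ineq:prod_res_dir_qk} to an upper bound on a Taylor remainder. Set $H\coloneq\nabla^2f(x_k)$, $d\coloneq x_{k+1}-x_k$ and $r\coloneq\nabla f(x_{k+1})+s_k$. By \eqref{eq:scaled_newton_direction}, $Hd=-\alpha_kw_k$. Define the first-order Taylor error $e\coloneq\nabla f(x_{k+1})-\nabla f(x_k)-Hd$. Since $r-w_k=\nabla f(x_{k+1})-\nabla f(x_k)$, we get
\begin{align*}
    r=w_k-\alpha_kw_k+e=\alpha_k\theta_kw_k+e,
\end{align*}
where we used $1-\alpha_k=\alpha_k\theta_k$. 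Hence $\nabla^2f(x_k)^{-1}w_k=H^{-1}(r-e)/(\alpha_k\theta_k)$. By Cauchy--Schwarz in the dual norm,
\begin{align*}
    \langle r,H^{-1}w_k\rangle\geq\frac{\|r\|_{x_k}^{*2}-\|r\|_{x_k}^*\|e\|_{x_k}^*}{\alpha_k\theta_k}.
\end{align*}
It therefore suffices to show $\|e\|_{x_k}^*\leq(1-c_p)\|r\|_{x_k}^*$. Since $\|r\|_{x_k}^*\geq\alpha_k\theta_k\|w_k\|_{x_k}^*-\|e\|_{x_k}^*$, this in turn follows from
\begin{align*}
    \|e\|_{x_k}^*\leq\frac{1-c_p}{2-c_p}\,\alpha_k\theta_k\|w_k\|_{x_k}^*.
\end{align*}
The constant on the right is $1/3$ for $p=2$ and $3/7$ for $p=3$. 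Note that the inexactness of $x_{k+1}$ (condition \eqref{ineq:fw_termination}) plays no role here: only the definition of $s_k$ is used.

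\textbf{Case $p=2$.} Write $e=\int_0^1(\nabla^2f(x_k+\tau d)-\nabla^2f(x_k))d\,d\tau$ and apply \cref{assumption:holder} with base point $x_k$, using the operator norm $\|\cdot\|_{\op}$ relative to $\|\cdot\|_{x_k}$. This gives
\begin{align*}
    \|e\|_{x_k}^*\leq\frac{L_{2,\nu}}{1+\nu}\|d\|_{x_k}^{1+\nu}=\frac{L_{2,\nu}}{1+\nu}\alpha_k^{1+\nu}\|w_k\|_{x_k}^{*(1+\nu)}.
\end{align*}
The target inequality then reduces to $3L_{2,\nu}\alpha_k^\nu\|w_k\|_{x_k}^{*\nu}\leq(1+\nu)\theta_k$. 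Using $\alpha_k\leq1/\theta_k$, it is enough that $\theta_k^{1+\nu}\geq3L_{2,\nu}\|w_k\|_{x_k}^{*\nu}/(1+\nu)$. This is implied by \eqref{ineq:unified_theta_condition}, since $q-1=1+\nu$, $q-2=\nu$, and $9\geq3/(1+\nu)$.

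\textbf{Case $p=3$.} Expand to second order:
\begin{align*}
    e=\tfrac12\nabla^3f(x_k)[d,d]+R,\qquad\|R\|_{x_k}^*\leq\frac{L_{3,\nu}}{(1+\nu)(2+\nu)}\|d\|_{x_k}^{2+\nu}.
\end{align*}
Then $\|e\|_{x_k}^*\leq\frac{\Gamma_{3,\X}}{2}\|d\|_{x_k}^2+\frac{L_{3,\nu}}{(1+\nu)(2+\nu)}\|d\|_{x_k}^{2+\nu}$. Substitute $\|d\|_{x_k}=\alpha_k\|w_k\|_{x_k}^*\leq\|w_k\|_{x_k}^*/\theta_k$. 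The $L_{3,\nu}$ term is handled exactly as in the case $p=2$, using $\theta_k^{2+\nu}\geq9L_{3,\nu}\|w_k\|_{x_k}^{*(1+\nu)}$; that term needs only a fraction of the budget $3/7$.

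The main obstacle is the cubic term $\frac{\Gamma_{3,\X}}{2}\alpha_k^2\|w_k\|^{*2}$. It requires $\theta_k^2\gtrsim\Gamma_{3,\X}\|w_k\|_{x_k}^*$, which is not homogeneous with the exponent $(1+\nu)/(2+\nu)$ supplied by \eqref{ineq:unified_theta_condition}. I would first use the normalization $\Gamma_{3,\X}\leq2(L_{3,\nu}/(1+\nu))^{1/(1+\nu)}$ from \cref{assumption:holder}. Set $t\coloneq(L_{3,\nu}/(1+\nu))^{1/(2+\nu)}\|w_k\|_{x_k}^{*(1+\nu)/(2+\nu)}$, so that \eqref{ineq:unified_theta_condition} gives $\theta_k\geq9^{1/(2+\nu)}(1+\nu)^{1/(2+\nu)}t$. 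Both remainder terms can then be expressed in powers of $t/\theta_k$ and of $\|d\|_{x_k}$.

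If this does not close uniformly, I would split into cases. When $\alpha_k\|w_k\|_{x_k}^*$ is below a threshold, the cubic term dominates and is controlled by $\Gamma_{3,\X}$. When it is above the threshold, I would bound $\|\nabla^2f(x_k+\tau d)-\nabla^2f(x_k)\|_{\op}$ directly by $\Gamma_{3,\X}\|d\|_{x_k}$ and absorb the result using the factor $9$ and the slack $c_3=1/4$ (instead of $1/2$). This slack is presumably what the smaller constant for $p=3$ is chosen to provide.
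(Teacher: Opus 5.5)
Your reduction is correct and differs from the paper's route. Writing $r=\alpha_k\theta_kw_k+e$ and applying Cauchy--Schwarz reduces the claim to $\|e\|_{x_k}^*\le\frac{1-c_p}{2-c_p}\alpha_k\theta_k\|w_k\|_{x_k}^*$. The paper instead expands the squared Taylor bound on $\|r-(1-\alpha_k)w_k\|_{x_k}^{*2}$ and, for $p=3$, splits off the third-derivative term with Young's inequality. Your $p=2$ case is complete, and so is your handling of the $L_{3,\nu}$ remainder for $p=3$.

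The gap is the cubic term $\frac{\Gamma_{3,\X}}{2}\|d\|_{x_k}^2$, which you never actually bound, and the route you sketch would fail. Substituting $\|d\|_{x_k}=\alpha_k\|w_k\|_{x_k}^*\le\|w_k\|_{x_k}^*/\theta_k$ discards the information $\alpha_k\le1$. You would then need $\theta_k^2\geq c\,\Gamma_{3,\X}\|w_k\|_{x_k}^*$ for some absolute constant $c>0$. This is \emph{not} implied by \eqref{ineq:unified_theta_condition} when $\|w_k\|_{x_k}^*$ is small. For $\nu=1$ the hypothesis only gives $\theta_k\geq(9L_{3,1})^{1/3}\|w_k\|_{x_k}^{*2/3}$, so at equality $\theta_k^2$ is of order $\|w_k\|_{x_k}^{*4/3}$, which is much smaller than $\|w_k\|_{x_k}^*$ as $\|w_k\|_{x_k}^*\to0$. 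Your fallback case split does not repair this: bounding the Hessian difference by $\Gamma_{3,\X}\|d\|_{x_k}$ just reproduces the same term $\frac{\Gamma_{3,\X}}{2}\|d\|_{x_k}^2$.

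The missing idea is the one in Lemma~\ref{lemma:unified_theta_implies_individual}(ii). Keep $\alpha_k=1/(1+\theta_k)$ exactly and use Bernoulli's inequality in the form $\theta_k\alpha_k^{1+\nu}=\theta_k/(1+\theta_k)^{1+\nu}\le1/(1+\nu)$. Combined with $\theta_k^{2+\nu}\ge9L_{3,\nu}\|w_k\|_{x_k}^{*(1+\nu)}$ and $6^{1+\nu}\le9(1+\nu)^2$, this gives $6(L_{3,\nu}/(1+\nu))^{1/(1+\nu)}\alpha_k\|w_k\|_{x_k}^*\le\theta_k$. With the normalization $\Gamma_{3,\X}\le2(L_{3,\nu}/(1+\nu))^{1/(1+\nu)}$, this is exactly what the cubic term needs. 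Divide your required bound by $\alpha_k\|w_k\|_{x_k}^*$ and use $\alpha_k\theta_k<1$ for the remainder term:
\begin{align*}
    \frac{\Gamma_{3,\X}}{2}\alpha_k\|w_k\|_{x_k}^*+\frac{L_{3,\nu}\alpha_k^{1+\nu}\|w_k\|_{x_k}^{*(1+\nu)}}{(1+\nu)(2+\nu)}
    \leq\frac{\theta_k}{6}+\frac{\theta_k(\alpha_k\theta_k)^{1+\nu}}{9(1+\nu)(2+\nu)}
    \leq\frac{2\theta_k}{9}<\frac{3\theta_k}{7}.
\end{align*}
This closes your argument. Since $2/9<1/3$, your route would even give the $p=3$ inequality with $c_3=1/2$. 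The constant $1/4$ in the paper comes from its Young step, so the extra slack you planned to exploit is not needed.
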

\begin{proof}
    By \cref{lemma:unified_theta_implies_individual}, \eqref{ineq:unified_theta_condition} implies the assumptions of \cref{lemma:prod_res_dir}.
    Applying that lemma and using $1-\alpha_k=\alpha_k\theta_k$ yields \eqref{ineq:prod_res_dir_qk}.
\end{proof}

We first establish three auxiliary lemmas.

\begin{lemma}\label[lemma]{lemma:unified_theta_implies_individual}
    Let $p\in\{2,3\}$, $\nu\in[0,1]$, and $q=p+\nu$, and let $\alpha_k=1/(1+\theta_k)$ with $\theta_k>0$.
    Suppose also that $\|w_k\|_{x_k}^*>0$ and that \eqref{ineq:unified_theta_condition} holds.
    Then, the following statements hold.
    \begin{enumerate}
        \item If $p=2$, then
        \begin{align*}
            \theta_k\geq\frac{L_{2,\nu}}{1+\nu}\alpha_k^\nu\|w_k\|_{x_k}^{*\nu}.
        \end{align*}
        \item If $p=3$, then
        \begin{align*}
            \theta_k\geq\alpha_k\|w_k\|_{x_k}^*
            \max\left\{6\left(\frac{L_{3,\nu}}{1+\nu}\right)^{\frac{1}{1+\nu}},\frac{\sqrt{3}L_{3,\nu}}{(1+\nu)(2+\nu)}\alpha_k^\nu\|w_k\|_{x_k}^{*\nu}\right\}.
        \end{align*}
    \end{enumerate}
\end{lemma}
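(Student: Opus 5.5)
Write $W\coloneq\|w_k\|_{x_k}^*>0$. The argument is purely algebraic: the only inputs are the hypothesis \eqref{ineq:unified_theta_condition}, $\theta_k\geq(9L_{p,\nu})^{1/(q-1)}W^{(q-2)/(q-1)}$, and the identity $\alpha_k=1/(1+\theta_k)$. The first step is the inequality $\alpha_k\theta_k\leq1$. Next, for any target of the form $\theta_k\geq c\,\alpha_k^{s}W^{s}$ with $s>0$, note that $\alpha_k\leq\theta_k^{-1}$. It therefore suffices to prove $\theta_k\geq c\,\theta_k^{-s}W^{s}$, which is the same as $\theta_k^{1+s}\geq cW^{s}$. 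In this way each required bound becomes a lower bound on a power of $\theta_k$, and that is exactly what the hypothesis provides once both sides are raised to a suitable power.

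\emph{Case $p=2$.} Here $q=2+\nu$, so the hypothesis reads $\theta_k^{1+\nu}\geq 9L_{2,\nu}W^{\nu}$. The target is $\theta_k\geq\frac{L_{2,\nu}}{1+\nu}\alpha_k^\nu W^\nu$. Using $\alpha_k^\nu\leq\theta_k^{-\nu}$, it suffices to have $\theta_k^{1+\nu}\geq\frac{L_{2,\nu}}{1+\nu}W^\nu$. This follows at once because $9\geq1/(1+\nu)$. If $\nu=0$, the bound $\alpha_k^0=1$ is used directly and the same estimate works.

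\emph{Case $p=3$.} Here $q=3+\nu$ and $q-1=2+\nu$, so the hypothesis reads $\theta_k^{2+\nu}\geq 9L_{3,\nu}W^{1+\nu}$. The two terms of the maximum are handled separately.

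For the second term, the target is $\theta_k\geq\frac{\sqrt3L_{3,\nu}}{(1+\nu)(2+\nu)}\alpha_k^{1+\nu}W^{1+\nu}$. Using $\alpha_k^{1+\nu}\leq\theta_k^{-(1+\nu)}$, it suffices to show $\theta_k^{2+\nu}\geq\frac{\sqrt3}{(1+\nu)(2+\nu)}L_{3,\nu}W^{1+\nu}$. This holds since $\sqrt3/2\leq9$.

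The first term is the delicate one. The target is $\theta_k\geq6(L_{3,\nu}/(1+\nu))^{1/(1+\nu)}\alpha_kW$. Bounding $\alpha_k$ by $\theta_k^{-1}$ reduces this to $\theta_k^2\geq6(L/(1+\nu))^{1/(1+\nu)}W$, where $L=L_{3,\nu}$. The exponent of $W$ in this reduced target (namely $1$ on $\theta_k^2$) does not match the exponent in the hypothesis ($(1+\nu)/(2+\nu)$ on $\theta_k$), so the hypothesis alone cannot give the bound uniformly in $W$. I plan to split on the size of $\theta_k$.

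If $\theta_k\geq1$, use the cruder bound $\alpha_k\leq1$ and try to obtain $\theta_k\geq6\lambda W$ with $\lambda=(L/(1+\nu))^{1/(1+\nu)}$. From the hypothesis, $W\leq(\theta_k^{2+\nu}/(9L))^{1/(1+\nu)}$. I would then compare $6\lambda W$ with $\theta_k$ using $\alpha_k=1/(1+\theta_k)$ exactly, i.e. aim for $\theta_k(1+\theta_k)\geq6\lambda W$. This amounts to showing $\theta_k(1+\theta_k)\geq6\bigl(\theta_k^{2+\nu}/(9(1+\nu))\bigr)^{1/(1+\nu)}$. Writing $x=\theta_k$, the right-hand side is $6(9(1+\nu))^{-1/(1+\nu)}x^{(2+\nu)/(1+\nu)}$, and the exponent $(2+\nu)/(1+\nu)$ lies in $[1.5,2]$. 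Such a power is controlled by $x+x^2$ via a weighted AM--GM (Young) inequality, provided the constant satisfies $6(9(1+\nu))^{-1/(1+\nu)}\leq1$. I must verify this: at $\nu=0$ it gives $6/9<1$, and at $\nu=1$ it gives $6/\sqrt{18}\approx1.41>1$. So this check fails at $\nu=1$, and that endpoint is the main obstacle.

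To close the gap at $\nu=1$, I would bring in the normalization from \cref{assumption:holder}, which assumes $L_{3,\nu}$ is large enough that $\Gamma_{3,\X}\leq2(L/(1+\nu))^{1/(1+\nu)}$. If that normalization does not help, I would instead use the full Young inequality $x^{(2+\nu)/(1+\nu)}\leq\frac{\nu}{1+\nu}x+\frac{1}{1+\nu}x^2$. This bound is tight only near $x=1$; exploiting it carefully, I expect the constant to work out, and failing that I would redo the estimate using the exact form $\alpha_k=1/(1+\theta_k)$ throughout rather than the bound $\alpha_k\leq\theta_k^{-1}$.
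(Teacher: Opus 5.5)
There is a genuine gap, and it sits at the only nontrivial step of the lemma. Your treatment of $p=2$ and of the second entry of the maximum for $p=3$ is correct and matches the paper; both need only $\alpha_k\theta_k\le 1$.

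\textbf{Where the gap is.} The problem is the first entry for $p=3$, namely $\theta_k\ge 6\lambda\alpha_k W$ with $\lambda=(L_{3,\nu}/(1+\nu))^{1/(1+\nu)}$.
\begin{itemize}
\item Your reduction is correct. Using the exact $\alpha_k=1/(1+\theta_k)$ and $W^{1+\nu}\le\theta_k^{2+\nu}/(9L_{3,\nu})$, the claim becomes the scalar inequality $x(1+x)\ge c_\nu x^{(2+\nu)/(1+\nu)}$ for all $x>0$, where $c_\nu=6(9(1+\nu))^{-1/(1+\nu)}$.
\item You never prove this inequality. Your Young bound $x^{(2+\nu)/(1+\nu)}\le x+x^2$ needs $c_\nu\le1$, which fails at $\nu=1$ (there $c_1=\sqrt2$). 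The proposal then stops at ``I expect the constant to work out.''
\item Your first remedy cannot work. $L_{3,\nu}$ cancels completely from the reduced inequality, so no normalization of $L_{3,\nu}$ in the H\"older assumption can change it. That normalization also concerns a third-derivative bound that plays no role in this lemma.
\item The announced case split is oriented the wrong way. $\alpha_k\le1$ is the useful bound for small $\theta_k$, and $\alpha_k\le\theta_k^{-1}$ for large $\theta_k$.
\item Even correctly oriented, the split fails. Replacing $\alpha_k$ by $\min\{1,\theta_k^{-1}\}$ loses a factor $2$ at $\theta_k=1$. At $\nu=1$ with $W$ extremal, it would demand $1\ge\sqrt2$.
\end{itemize}

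\textbf{How to close it.} The paper proceeds as follows.
\begin{itemize}
\item Raise the target to the power $1+\nu$ to get $(6\lambda\alpha_kW)^{1+\nu}\le\theta_k^{1+\nu}\frac{6^{1+\nu}}{9(1+\nu)}\frac{\theta_k}{(1+\theta_k)^{1+\nu}}$.
\item Apply Bernoulli's inequality $(1+\theta_k)^{1+\nu}\ge(1+\nu)\theta_k$, which bounds the last factor by $1/(1+\nu)$.
\item Finish with $6^{1+\nu}\le9(1+\nu)^2$ on $[0,1]$. The map $\nu\mapsto(1+\nu)\log 6-\log 9-2\log(1+\nu)$ is convex, negative at $\nu=0$ and zero at $\nu=1$.
\end{itemize}

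Alternatively, your own ``full Young'' fallback does succeed if you keep the weights instead of bounding them by $1$. Write $(2+\nu)/(1+\nu)=1+\beta$ with $\beta=1/(1+\nu)\ge1/2$. Then $x^{1+\beta}\le(1-\beta)x+\beta x^2\le\beta(x+x^2)$. It therefore suffices that $c_\nu\le 1+\nu$, i.e.\ $6^{1+\nu}\le9(1+\nu)^{2+\nu}$. This follows from the inequality above, since $(1+\nu)^\nu\ge1$ (at $\nu=1$ it reads $36\le72$).

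One of these verifications has to be written out; without it the argument is unfinished at its crux.
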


\begin{proof}
    (i) The case $p=2$:
    Since $q=2+\nu$, \eqref{ineq:unified_theta_condition} gives
    \begin{align*}
        \theta_k^{1+\nu}\geq9L_{2,\nu}\|w_k\|_{x_k}^{*\nu}.
    \end{align*}
    Therefore, using $\alpha_k\theta_k\leq1$, we obtain
    \begin{align*}
        \frac{L_{2,\nu}}{1+\nu}\alpha_k^\nu\|w_k\|_{x_k}^{*\nu}
        \leq\frac{\alpha_k^\nu\theta_k^{1+\nu}}{9(1+\nu)}
        =\frac{(\alpha_k\theta_k)^\nu}{9(1+\nu)}\theta_k
        \leq\theta_k.
    \end{align*}

    (ii) The case $p=3$:
    Since $q=3+\nu$, we have
    \begin{align}
        \theta_k^{2+\nu}\geq9L_{3,\nu}\|w_k\|_{x_k}^{*(1+\nu)}. \label{ineq:theta_power_p3}
    \end{align}
    First,
    \begin{align*}
        \left(6\left(\frac{L_{3,\nu}}{1+\nu}\right)^{\frac{1}{1+\nu}}\alpha_k\|w_k\|_{x_k}^*\right)^{1+\nu}
        \leq\theta_k^{1+\nu}\frac{6^{1+\nu}}{9(1+\nu)}\frac{\theta_k}{(1+\theta_k)^{1+\nu}}
        \leq\theta_k^{1+\nu}\frac{6^{1+\nu}}{9(1+\nu)^2}
        \leq\theta_k^{1+\nu},
    \end{align*}
    where we used
    \begin{align*}
        \frac{\theta_k}{(1+\theta_k)^{1+\nu}}\leq\frac{1}{1+\nu},
        \qquad
        6^{1+\nu}\leq9(1+\nu)^2.
    \end{align*}
    Indeed, Bernoulli's inequality gives $(1+\theta_k)^{1+\nu}\geq1+(1+\nu)\theta_k\geq(1+\nu)\theta_k>0$.
    Taking reciprocals and multiplying by $\theta_k$ yields
    \begin{align*}
        \frac{\theta_k}{(1+\theta_k)^{1+\nu}}
        \leq\frac{\theta_k}{(1+\nu)\theta_k}
        =\frac{1}{1+\nu}.
    \end{align*}
    Moreover, $6^{1+\nu}\leq9(1+\nu)^2$ holds for every $\nu\in[0,1]$.
    Hence,
    \begin{align*}
        \theta_k\geq6\left(\frac{L_{3,\nu}}{1+\nu}\right)^{\frac{1}{1+\nu}}\alpha_k\|w_k\|_{x_k}^*.
    \end{align*}
    In addition, \eqref{ineq:theta_power_p3} and $\alpha_k\theta_k\leq1$ imply
    \begin{align*}
        \frac{\sqrt{3}L_{3,\nu}}{(1+\nu)(2+\nu)}\alpha_k^{1+\nu}\|w_k\|_{x_k}^{*(1+\nu)}
        \leq\theta_k\frac{\sqrt{3}}{9(1+\nu)(2+\nu)}(\alpha_k\theta_k)^{1+\nu}
        \leq\theta_k.
    \end{align*}
    This proves the claim.
\end{proof}

\begin{lemma}\label[lemma]{lemma:holder}
    Suppose that \cref{assumption:holder} holds. If $p=2$, then, for any $x,y\in\X$,
    \begin{align*}
        \|\nabla f(y)-\nabla f(x)-\nabla^2f(x)(y-x)\|_{x}^*\leq\frac{L_{2,\nu}}{1+\nu}\|y-x\|_{x}^{1+\nu}.
    \end{align*}
    If $p=3$, then, for any $x,y\in\X$,
    \begin{align*}
        \|\nabla f(y)-\nabla f(x)-\nabla^2f(x)(y-x)-\frac{1}{2}\nabla^3f(x)[y-x,y-x]\|_{x}^*\leq\frac{L_{3,\nu}}{(1+\nu)(2+\nu)}\|y-x\|_{x}^{2+\nu}.
    \end{align*}
\end{lemma}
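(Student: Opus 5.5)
The plan is to use the integral form of Taylor's theorem along the segment $x_\tau \coloneq x+\tau(y-x)$, $\tau\in[0,1]$, which stays in $\X$ by convexity. Then we measure the remainder in the dual norm $\|\cdot\|_x^*$ and bound the integrand with \cref{assumption:holder}. All operator norms are taken at the fixed base point $x$, as in the notation section. In particular, the Hölder bound $\|\nabla^pf(x_\tau)-\nabla^pf(x)\|_{\op}\leq L_{p,\nu}\|x_\tau-x\|_x^\nu=L_{p,\nu}\tau^\nu\|y-x\|_x^\nu$ is available with the local norm at $x$ on both sides.

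For $p=2$, we write
\[
\nabla f(y)-\nabla f(x)-\nabla^2f(x)(y-x)=\int_0^1\bigl(\nabla^2f(x_\tau)-\nabla^2f(x)\bigr)(y-x)\,d\tau .
\]
Taking $\|\cdot\|_x^*$, we move the norm inside the integral by the triangle inequality (Minkowski for Bochner integrals). The definition of $\|H\|_{\op}=\sup_{z\neq0}\|Hz\|_x^*/\|z\|_x$ gives $\|(\nabla^2f(x_\tau)-\nabla^2f(x))(y-x)\|_x^*\leq L_{2,\nu}\tau^\nu\|y-x\|_x^{1+\nu}$. Integrating $\tau^\nu$ over $[0,1]$ yields the factor $1/(1+\nu)$, as claimed.

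For $p=3$, we first use that $\nabla^2f(x_\tau)-\nabla^2f(x)=\int_0^\tau\nabla^3f(x_\sigma)[y-x]\,d\sigma$. Substituting this into the previous identity and subtracting $\tfrac12\nabla^3f(x)[y-x,y-x]=\int_0^1\int_0^\tau\nabla^3f(x)[y-x,y-x]\,d\sigma\,d\tau$ gives the remainder
\[
\int_0^1\int_0^\tau\bigl(\nabla^3f(x_\sigma)-\nabla^3f(x)\bigr)[y-x,y-x]\,d\sigma\,d\tau .
\]
For the dual norm of a vector $g=T[h,h]$ we use $\|g\|_x^*=\sup_{\|z\|_x\leq1}\langle g,z\rangle=\sup_{\|z\|_x\leq1} T[h,h,z]$. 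This identity holds because $\|\cdot\|_x^*$ is the dual of $\|\cdot\|_x$ via $\nabla^2f(x)$. It gives $\|T[h,h]\|_x^*\leq\|T\|_{\op}\|h\|_x^2$. Applying the Hölder assumption, the integrand is bounded by $L_{3,\nu}\sigma^\nu\|y-x\|_x^{2+\nu}$. Then $\int_0^1\int_0^\tau\sigma^\nu\,d\sigma\,d\tau=\int_0^1\tau^{1+\nu}/(1+\nu)\,d\tau=1/((1+\nu)(2+\nu))$, which is the stated constant.

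The only step needing care is the duality identity $\|T[h,h]\|_x^*=\sup_{\|z\|_x\le1}T[h,h,z]$, which links the tensor operator norm to the dual local norm. It follows from Cauchy--Schwarz in the $\nabla^2f(x)$ inner product, with the supremum attained at $z=\nabla^2f(x)^{-1}g/\|g\|_x^*$. The remaining steps are routine integrations. Differentiability of $\nabla^2 f$ along the segment is guaranteed by $p=3$ in \cref{assumption:holder}.
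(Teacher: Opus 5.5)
Your proposal is correct and follows essentially the paper's route: the integral Taylor remainder along the segment $x+t(y-x)\in\X$, bounded via the H\"older assumption with all norms taken at $x$; your double integral $\int_0^1\int_0^\tau(\cdot)\,d\sigma\,d\tau$ is the paper's $(1-t)$-weighted single integral after Fubini. The one case the paper treats separately is $\nu=0$, where $\nabla^pf$ need not be continuous: there it applies the mean-value theorem ($p=2$) or second-order Taylor with Lagrange remainder ($p=3$) to $t\mapsto\langle\nabla f(x+t(y-x)),u\rangle$ with $\|u\|_x\leq1$, whereas in your Bochner setting you should add that the relevant derivative is bounded on the segment, so the function is Lipschitz and the Lebesgue fundamental theorem of calculus still applies.
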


\begin{proof}
    Let $d\coloneq y-x$. Since $\X$ is convex, $x+td\in\X$ for every $t\in[0,1]$.
    Suppose first that $\nu>0$.
    If $p=2$, the fundamental theorem of calculus gives
    \begin{align*}
        \nabla f(y)-\nabla f(x)-\nabla^2f(x)d=\int_0^1\left(\nabla^2f(x+td)-\nabla^2f(x)\right)d\,dt.
    \end{align*}
    Hence, by \cref{assumption:holder},
    \begin{align*}
        \|\nabla f(y)-\nabla f(x)-\nabla^2f(x)d\|_x^*\leq L_{2,\nu}\|d\|_x^{1+\nu}\int_0^1t^\nu\,dt=\frac{L_{2,\nu}}{1+\nu}\|d\|_x^{1+\nu}.
    \end{align*}
    If $p=3$, the integral Taylor formula gives
    \begin{align*}
        \nabla f(y)-\nabla f(x)-\nabla^2f(x)d-\frac{1}{2}\nabla^3f(x)[d,d]=\int_0^1(1-t)\left(\nabla^3f(x+td)-\nabla^3f(x)\right)[d,d]\,dt.
    \end{align*}
    Therefore,
    \begin{align*}
        &\|\nabla f(y)-\nabla f(x)-\nabla^2f(x)d-\tfrac{1}{2}\nabla^3f(x)[d,d]\|_x^*\\
        &\qquad\leq L_{3,\nu}\|d\|_x^{2+\nu}\int_0^1(1-t)t^\nu dt=\frac{L_{3,\nu}}{(1+\nu)(2+\nu)}\|d\|_x^{2+\nu}.
    \end{align*}
    When $\nu=0$, the same bounds follow by applying the one-dimensional mean-value theorem for $p=2$ and the second-order Taylor theorem for $p=3$ to $t\mapsto\langle\nabla f(x+td),u\rangle$ for arbitrary $u$ with $\|u\|_x\leq1$, and then taking the supremum over such $u$.
\end{proof}

We next establish the counterpart of~\cite[Lemma~4]{Hanzely2026-kv}.

\begin{lemma}\label[lemma]{lemma:prod_res_dir}
    Let $\|w_k\|_{x_k}^*>0$, and let $\alpha_k=1/(1+\theta_k)$ with $\theta_k>0$.
    \begin{enumerate}
        \item Suppose that \cref{assumption:holder} holds with $p=2$ and that
        \begin{align*}
            \theta_k\geq\frac{L_{2,\nu}}{1+\nu}\alpha_k^\nu\|w_k\|_{x_k}^{*\nu}.
        \end{align*}
        Then,
        \begin{align*}
            \left\langle\nabla f(x_{k+1})+s_k,\nabla^2f(x_k)^{-1}w_k\right\rangle
            \geq\frac{1}{2(1-\alpha_k)}\|\nabla f(x_{k+1})+s_k\|_{x_k}^{*2}.
        \end{align*}
        \item Suppose that \cref{assumption:holder} holds with $p=3$ and that
        \begin{align}
            \theta_k\geq\alpha_k\|w_k\|_{x_k}^*\max\left\{6\left(\frac{L_{3,\nu}}{1+\nu}\right)^{\frac{1}{1+\nu}},\frac{\sqrt{3}L_{3,\nu}}{(1+\nu)(2+\nu)}\alpha_k^\nu\|w_k\|_{x_k}^{*\nu}\right\}. \label{ineq:theta_p3}
        \end{align}
        Then,
        \begin{align*}
            \left\langle\nabla f(x_{k+1})+s_k,\nabla^2f(x_k)^{-1}w_k\right\rangle
            \geq\frac{1}{4(1-\alpha_k)}\|\nabla f(x_{k+1})+s_k\|_{x_k}^{*2}.
        \end{align*}
    \end{enumerate}
\end{lemma}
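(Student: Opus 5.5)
The plan is to rewrite the residual $\nabla f(x_{k+1})+s_k$ as a multiple of $w_k$ plus a Taylor remainder, and reduce the claimed inner-product inequality to a norm comparison between that remainder and $(1-\alpha_k)\|w_k\|_{x_k}^*$.

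Write $H\coloneq\nabla^2f(x_k)$, $d\coloneq x_{k+1}-x_k$, $r\coloneq\nabla f(x_{k+1})+s_k$ and $\beta\coloneq1-\alpha_k=\alpha_k\theta_k$. By \eqref{eq:scaled_newton_direction}, $d=-\alpha_kH^{-1}w_k$ and $\|d\|_{x_k}=\alpha_k\|w_k\|_{x_k}^*$. Substituting the definition of $s_k$ gives
\begin{align*}
    r=\nabla f(x_{k+1})-\nabla f(x_k)-\tfrac{1}{\alpha_k}Hd=e+\beta w_k,\qquad e\coloneq\nabla f(x_{k+1})-\nabla f(x_k)-Hd,
\end{align*}
because $-(\tfrac{1}{\alpha_k}-1)Hd=\theta_k\alpha_kw_k=\beta w_k$. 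Hence $\beta w_k=r-e$. All inner products and norms below are taken in the $H^{-1}$ metric, i.e.\ $\|\cdot\|_{x_k}^*$.

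\textbf{Algebraic reduction.} Using $\beta w_k=r-e$,
\begin{align*}
    \beta\langle r,H^{-1}w_k\rangle=\|r\|_{x_k}^{*2}-\langle r,H^{-1}e\rangle .
\end{align*}
Therefore the claim with constant $c$ (namely $c=1/2$ or $c=1/4$) is equivalent to $\langle r,H^{-1}e\rangle\leq(1-c)\|r\|_{x_k}^{*2}$. For $c=1/2$ this condition reads $\|r\|_{x_k}^{*2}-2\langle r,H^{-1}e\rangle\geq0$. Completing the square, the left-hand side equals
\begin{align*}
    \|r-e\|_{x_k}^{*2}-\|e\|_{x_k}^{*2}=\beta^2\|w_k\|_{x_k}^{*2}-\|e\|_{x_k}^{*2}.
\end{align*}
So it suffices to show $\|e\|_{x_k}^*\leq\beta\|w_k\|_{x_k}^*$. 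Since $c=1/4$ is a weaker requirement than $c=1/2$, the same bound also settles part (ii).

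\textbf{Case $p=2$.} \cref{lemma:holder} gives
\begin{align*}
    \|e\|_{x_k}^*\leq\tfrac{L_{2,\nu}}{1+\nu}\|d\|_{x_k}^{1+\nu}=\tfrac{L_{2,\nu}}{1+\nu}\alpha_k^{1+\nu}\|w_k\|_{x_k}^{*(1+\nu)}.
\end{align*}
This is at most $\alpha_k\theta_k\|w_k\|_{x_k}^*$ exactly when $\theta_k\geq\frac{L_{2,\nu}}{1+\nu}\alpha_k^\nu\|w_k\|_{x_k}^{*\nu}$, which is the hypothesis of part (i).

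\textbf{Case $p=3$.} Split $e=\tfrac12\nabla^3f(x_k)[d,d]+e'$, where $e'$ is the remainder controlled by \cref{lemma:holder}.
\begin{itemize}
    \item For the cubic term, use the operator-norm definition together with $\Gamma_{3,\X}\leq2(L_{3,\nu}/(1+\nu))^{1/(1+\nu)}$ from \cref{assumption:holder}. This gives $\|\tfrac12\nabla^3f(x_k)[d,d]\|_{x_k}^*\leq(L_{3,\nu}/(1+\nu))^{1/(1+\nu)}\alpha_k^2\|w_k\|_{x_k}^{*2}$. The first entry of the max in \eqref{ineq:theta_p3} then bounds this by $\tfrac16\alpha_k\theta_k\|w_k\|_{x_k}^*$.
    \item For the remainder, $\|e'\|_{x_k}^*\leq\frac{L_{3,\nu}}{(1+\nu)(2+\nu)}\alpha_k^{2+\nu}\|w_k\|_{x_k}^{*(2+\nu)}$. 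The second entry of the max bounds this by $\tfrac{1}{\sqrt3}\alpha_k\theta_k\|w_k\|_{x_k}^*$.
\end{itemize}
Since $\tfrac16+\tfrac1{\sqrt3}<1$, we obtain $\|e\|_{x_k}^*\leq\beta\|w_k\|_{x_k}^*$, and the reduction above gives the inequality (indeed with constant $1/2$, hence a fortiori $1/4$).

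The main point needing care is bounding the third-derivative term with the $x_k$-dependent operator norm. Specifically, one must check that $\|T[d,d]\|_{x_k}^*\leq\|T\|_{\op}\|d\|_{x_k}^2$, which follows by duality from the definition of $\|\cdot\|_{\op}$. One must also use the sup bound $\Gamma_{3,\X}$ rather than Hölder continuity for this term. Everything else is the elementary square-completion identity above.
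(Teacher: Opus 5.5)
Your proof is correct. Part (i) matches the paper's argument: the paper also writes the Taylor error as $r-(1-\alpha_k)w_k$, expands its squared dual norm, and reduces to your sufficient condition $\|e\|_{x_k}^*\le(1-\alpha_k)\|w_k\|_{x_k}^*$.

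Part (ii) is where you take a different route. The paper keeps the cubic term inside the square: it expands $\|r-(1-\alpha_k)w_k-\tfrac12\nabla^3f(x_k)[d,d]\|_{x_k}^{*2}$ and handles the two cross terms separately. The cross term between $r$ and $\nabla^3f(x_k)[d,d]$ is absorbed by Cauchy--Schwarz and Young's inequality. That costs half of $\|r\|_{x_k}^{*2}$, which is exactly why the paper only reaches $1/(4(1-\alpha_k))$. The three remaining error terms are each dominated by $\tfrac16(1-\alpha_k)\|w_k\|_{x_k}^{*2}$, and this is where the constants $6$ and $\sqrt3$ in \eqref{ineq:theta_p3} come from.

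You instead apply the triangle inequality to $e=\tfrac12\nabla^3f(x_k)[d,d]+e'$ before squaring. As a result, $r$ never interacts with the cubic term, and the square-completion identity from part (i) carries over unchanged. Your route is shorter and gives a stronger result. Under the same hypothesis it yields the constant $1/(2(1-\alpha_k))$ for $p=3$ as well, with slack since $\tfrac16+\tfrac1{\sqrt3}<1$. So one could take $c_3=1/2$ in \eqref{ineq:backtracking} and \cref{theorem:inexact_one_step}. The paper's expansion follows the structure of the unconstrained lemma it adapts, but gains nothing over your argument here.
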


\begin{proof}
    (i) The case $p=2$:
    By the $L_{2,\nu}$-H\"older continuity, \cref{lemma:holder} implies
    \begin{align*}
        \left(\frac{L_{2,\nu}}{1+\nu}\|x_{k+1}-x_k\|_{x_k}^{1+\nu}\right)^2
        &\geq\|\nabla f(x_{k+1})-\nabla f(x_k)-\nabla^2f(x_k)(x_{k+1}-x_k)\|_{x_k}^{*2}\\
        &=\|\nabla f(x_{k+1})-\nabla f(x_k)+\alpha_kw_k\|_{x_k}^{*2}\\
        &=\|\nabla f(x_{k+1})+s_k-(1-\alpha_k)(\nabla f(x_k)+s_k)\|_{x_k}^{*2}\\
        &=\|\nabla f(x_{k+1})+s_k-(1-\alpha_k)w_k\|_{x_k}^{*2}\\
        &=\|\nabla f(x_{k+1})+s_k\|_{x_k}^{*2}+(1-\alpha_k)^2\|w_k\|_{x_k}^{*2}\\
        &\quad-2(1-\alpha_k)\left\langle\nabla f(x_{k+1})+s_k,\nabla^2f(x_k)^{-1}w_k\right\rangle.
    \end{align*}
    It is therefore sufficient to show that
    \begin{align}
        \frac{1-\alpha_k}{2}\|w_k\|_{x_k}^{*2}
        \geq\frac{1}{2(1-\alpha_k)}
        \left(\frac{L_{2,\nu}}{1+\nu}\|x_{k+1}-x_k\|_{x_k}^{1+\nu}\right)^2. \label{ineq:sigma}
    \end{align}
    By \eqref{eq:scaled_newton_direction} and $1-\alpha_k=\alpha_k\theta_k$, \eqref{ineq:sigma} is equivalent to $\theta_k\geq\frac{L_{2,\nu}}{1+\nu}\alpha_k^\nu\|w_k\|_{x_k}^{*\nu}$, which is equivalent to the assumption of the lemma.
    Consequently,
    \begin{align*}
        \left\langle\nabla f(x_{k+1})+s_k,\nabla^2f(x_k)^{-1}w_k\right\rangle
        &\geq\frac{1}{2(1-\alpha_k)}\|\nabla f(x_{k+1})+s_k\|_{x_k}^{*2}+\frac{1-\alpha_k}{2}\|w_k\|_{x_k}^{*2}\\
        &\quad-\frac{1}{2(1-\alpha_k)}
        \left(\frac{L_{2,\nu}}{1+\nu}\|x_{k+1}-x_k\|_{x_k}^{1+\nu}\right)^2\\
        &\geqby{\eqref{ineq:sigma}}\frac{1}{2(1-\alpha_k)}\|\nabla f(x_{k+1})+s_k\|_{x_k}^{*2}.
    \end{align*}

    (ii) The case $p=3$:
    First, by the $L_{3,\nu}$-H\"older continuity, \cref{lemma:holder} implies
    \begin{align*}
        &\left(\frac{L_{3,\nu}}{(1+\nu)(2+\nu)}\|x_{k+1}-x_k\|_{x_k}^{2+\nu}\right)^2\\
        &\geq\left\|\nabla f(x_{k+1})-\nabla f(x_k)-\nabla^2f(x_k)(x_{k+1}-x_k)-\frac12\nabla^3f(x_k)[x_{k+1}-x_k,x_{k+1}-x_k]\right\|_{x_k}^{*2}\\
        &=\left\|\nabla f(x_{k+1})+s_k-(1-\alpha_k)w_k-\frac12\nabla^3f(x_k)[x_{k+1}-x_k,x_{k+1}-x_k]\right\|_{x_k}^{*2}\\
        &=\|\nabla f(x_{k+1})+s_k\|_{x_k}^{*2}+(1-\alpha_k)^2\|w_k\|_{x_k}^{*2}+\frac14\|\nabla^3f(x_k)[x_{k+1}-x_k,x_{k+1}-x_k]\|_{x_k}^{*2}\\
        &\quad-2(1-\alpha_k)\left\langle\nabla f(x_{k+1})+s_k,\nabla^2f(x_k)^{-1}w_k\right\rangle\\
        &\quad+(1-\alpha_k)\left\langle\nabla^2f(x_k)^{-1/2}w_k,\nabla^2f(x_k)^{-1/2}\nabla^3f(x_k)[x_{k+1}-x_k,x_{k+1}-x_k]\right\rangle\\
        &\quad-\left\langle\nabla^2f(x_k)^{-1/2}(\nabla f(x_{k+1})+s_k),\nabla^2f(x_k)^{-1/2}\nabla^3f(x_k)[x_{k+1}-x_k,x_{k+1}-x_k]\right\rangle\\
        &\geqby{(a)}\frac12\|\nabla f(x_{k+1})+s_k\|_{x_k}^{*2}+(1-\alpha_k)^2\|w_k\|_{x_k}^{*2}-\frac14\|\nabla^3f(x_k)[x_{k+1}-x_k,x_{k+1}-x_k]\|_{x_k}^{*2}\\
        &\quad-2(1-\alpha_k)\left\langle\nabla f(x_{k+1})+s_k,\nabla^2f(x_k)^{-1}w_k\right\rangle\\
        &\quad-(1-\alpha_k)\|w_k\|_{x_k}^*\|\nabla^3f(x_k)[x_{k+1}-x_k,x_{k+1}-x_k]\|_{x_k}^*\\
        &\geqby{(b)}\frac12\|\nabla f(x_{k+1})+s_k\|_{x_k}^{*2}+(1-\alpha_k)^2\|w_k\|_{x_k}^{*2}-\left(\frac{L_{3,\nu}}{1+\nu}\right)^{\frac{2}{1+\nu}}\alpha_k^4\|w_k\|_{x_k}^{*4}\\
        &\quad-2(1-\alpha_k)\left\langle\nabla f(x_{k+1})+s_k,\nabla^2f(x_k)^{-1}w_k\right\rangle\\
        &\quad-2\left(\frac{L_{3,\nu}}{1+\nu}\right)^{\frac{1}{1+\nu}}\alpha_k^2(1-\alpha_k)\|w_k\|_{x_k}^{*3}.
    \end{align*}
    In (a), we used the Cauchy--Schwarz and Young inequalities:
    \begin{align*}
        &-\left\langle\nabla^2f(x_k)^{-1/2}(\nabla f(x_{k+1})+s_k),\nabla^2f(x_k)^{-1/2}\nabla^3f(x_k)[x_{k+1}-x_k,x_{k+1}-x_k]\right\rangle\\
        &\geq-\|\nabla f(x_{k+1})+s_k\|_{x_k}^*\|\nabla^3f(x_k)[x_{k+1}-x_k,x_{k+1}-x_k]\|_{x_k}^*\\
        &\geq-\frac12\|\nabla f(x_{k+1})+s_k\|_{x_k}^{*2}-\frac12\|\nabla^3f(x_k)[x_{k+1}-x_k,x_{k+1}-x_k]\|_{x_k}^{*2},
    \end{align*}
    together with
    \begin{align*}
        &(1-\alpha_k)\left\langle\nabla^2f(x_k)^{-1/2}w_k,\nabla^2f(x_k)^{-1/2}\nabla^3f(x_k)[x_{k+1}-x_k,x_{k+1}-x_k]\right\rangle\\
        &\geq-(1-\alpha_k)\|w_k\|_{x_k}^*\|\nabla^3f(x_k)[x_{k+1}-x_k,x_{k+1}-x_k]\|_{x_k}^*.
    \end{align*}
    In step (b), we used the definition of $\Gamma_{3,\X}$, the normalization of $L_{3,\nu}$ in \cref{assumption:holder}, and \eqref{eq:scaled_newton_direction}:
    \begin{align*}
        \|\nabla^3f(x_k)[x_{k+1}-x_k,x_{k+1}-x_k]\|_{x_k}^*
        &\leq\Gamma_{3,\X}\|x_{k+1}-x_k\|_{x_k}^2\\
        &\leq2\left(\frac{L_{3,\nu}}{1+\nu}\right)^{\frac{1}{1+\nu}}\|x_{k+1}-x_k\|_{x_k}^2\\
        &=2\left(\frac{L_{3,\nu}}{1+\nu}\right)^{\frac{1}{1+\nu}}\alpha_k^2\|w_k\|_{x_k}^{*2}.
    \end{align*}
    Rearranging the above inequality and using~\eqref{eq:scaled_newton_direction} gives
    \begin{align*}
        &\left\langle\nabla f(x_{k+1})+s_k,\nabla^2f(x_k)^{-1}w_k\right\rangle\\
        &\geq\frac{1}{4(1-\alpha_k)}\|\nabla f(x_{k+1})+s_k\|_{x_k}^{*2}+\frac{1-\alpha_k}{2}\|w_k\|_{x_k}^{*2}-\frac12\left(\frac{L_{3,\nu}}{1+\nu}\right)^{\frac{2}{1+\nu}}\frac{\alpha_k^4}{1-\alpha_k}\|w_k\|_{x_k}^{*4}\\
        &\quad-\left(\frac{L_{3,\nu}}{1+\nu}\right)^{\frac{1}{1+\nu}}\alpha_k^2\|w_k\|_{x_k}^{*3}-\frac{1}{2(1-\alpha_k)}\left(\frac{L_{3,\nu}}{(1+\nu)(2+\nu)}\right)^2(\alpha_k\|w_k\|_{x_k}^*)^{2(2+\nu)}.
    \end{align*}
    If $\alpha_k=1/(1+\theta_k)$ satisfies
    \begin{align}
        \frac{1-\alpha_k}{6}\|w_k\|_{x_k}^{*2}
        &\geq\frac12\left(\frac{L_{3,\nu}}{1+\nu}\right)^{\frac{2}{1+\nu}}\frac{\alpha_k^4}{1-\alpha_k}\|w_k\|_{x_k}^{*4}, \label{ineq:theta_p3_alpha4}\\
        \frac{1-\alpha_k}{6}\|w_k\|_{x_k}^{*2}
        &\geq\left(\frac{L_{3,\nu}}{1+\nu}\right)^{\frac{1}{1+\nu}}\alpha_k^2\|w_k\|_{x_k}^{*3}, \label{ineq:theta_p3_alpha2}\\
        \frac{1-\alpha_k}{6}\|w_k\|_{x_k}^{*2}
        &\geq\frac{1}{2(1-\alpha_k)}\left(\frac{L_{3,\nu}}{(1+\nu)(2+\nu)}\right)^2(\alpha_k\|w_k\|_{x_k}^*)^{2(2+\nu)}, \label{ineq:theta_p3_alpha2nu}
    \end{align}
    then
    \begin{align*}
        \left\langle\nabla f(x_{k+1})+s_k,\nabla^2f(x_k)^{-1}w_k\right\rangle
        \geq\frac{1}{4(1-\alpha_k)}\|\nabla f(x_{k+1})+s_k\|_{x_k}^{*2}.
    \end{align*}
    Since $1-\alpha_k=\alpha_k\theta_k$, we have
    \begin{align*}
        1-\alpha_k
        &=\alpha_k\theta_k\\
        &\geq
        \begin{cases}
            \sqrt{3}\left(\frac{L_{3,\nu}}{1+\nu}\right)^{\frac{1}{1+\nu}}\alpha_k^2\|w_k\|_{x_k}^*, &\text{if }\theta_k\geq\sqrt{3}\left(\frac{L_{3,\nu}}{1+\nu}\right)^{\frac{1}{1+\nu}}\alpha_k\|w_k\|_{x_k}^*,\\
            6\left(\frac{L_{3,\nu}}{1+\nu}\right)^{\frac{1}{1+\nu}}\alpha_k^2\|w_k\|_{x_k}^*, &\text{if }\theta_k\geq6\left(\frac{L_{3,\nu}}{1+\nu}\right)^{\frac{1}{1+\nu}}\alpha_k\|w_k\|_{x_k}^*,\\
            \frac{\sqrt{3}L_{3,\nu}}{(1+\nu)(2+\nu)}\alpha_k^{2+\nu}\|w_k\|_{x_k}^{*(1+\nu)}, &\text{if }\theta_k\geq\frac{\sqrt{3}L_{3,\nu}}{(1+\nu)(2+\nu)}\alpha_k^{1+\nu}\|w_k\|_{x_k}^{*(1+\nu)}.
        \end{cases}
    \end{align*}
    Hence, \eqref{ineq:theta_p3} implies \eqref{ineq:theta_p3_alpha4}, \eqref{ineq:theta_p3_alpha2}, and \eqref{ineq:theta_p3_alpha2nu}.
\end{proof}

\subsection{Proof of Theorem~\ref{theorem:inexact_one_step}}\label{appendix:proof-inexact_one_step}
\begin{proof}[Proof of \cref{theorem:inexact_one_step}]
    If $w_k=0$, then \eqref{eq:scaled_newton_direction} gives $x_{k+1}=x_k$, and hence $r_{k+1}=\nabla f(x_{k+1})+s_k=w_k=0$.
    Thus, \eqref{ineq:one_step_decrease_inexact} holds trivially.
    We therefore assume that $w_k\neq0$.

    For the root Newton rule and the accuracy, \cref{lemma:q_r_relation_inexact} gives
    \begin{align}
        \theta_k=B_q\Delta_k^{\frac{q-2}{q-1}}=(9L_{p,\nu})^{\frac{1}{q-1}}(\kappa_\omega\Delta_k)^{\frac{q-2}{q-1}}\geq(9L_{p,\nu})^{\frac{1}{q-1}}\|w_k\|_{x_k}^{*\frac{q-2}{q-1}}, \label{ineq:theta_inexact_sufficient}
    \end{align}
    so \cref{proposition:residual_acceptance} gives \eqref{ineq:prod_res_dir_qk}.
    It is not necessary that earlier updates use the root stepsize.
    For an update accepted by \cref{alg:residual_backtracking}, \eqref{ineq:prod_res_dir_qk} follows directly from the acceptance condition \eqref{ineq:backtracking}.
    In both cases, $0<\theta_k\leq B_q\Delta_k^{\frac{q-2}{q-1}}$.
    Using the convexity of $f$, $\nabla f(x_{k+1})=r_{k+1}-s_k$, and $x_{k+1}-x_k=-\alpha_k\nabla^2f(x_k)^{-1}w_k$, we obtain
    \begin{align*}
        f(x_k)-f(x_{k+1})
        &\geq-\langle\nabla f(x_{k+1}),x_{k+1}-x_k\rangle\\
        &=\alpha_k\langle r_{k+1},\nabla^2f(x_k)^{-1}w_k\rangle-\langle s_k,x_k-x_{k+1}\rangle\\
        &\geqby{\eqref{ineq:fw_termination}}\alpha_k\langle r_{k+1},\nabla^2f(x_k)^{-1}w_k\rangle-\eta_k\\
        &\geqby{\eqref{ineq:prod_res_dir_qk}}\frac{c_p}{\theta_k}\|r_{k+1}\|_{x_k}^{*2}-\eta_k\\
        &\geq\frac{c_p}{B_q}\frac{\|r_{k+1}\|_{x_k}^{*2}}{\Delta_k^{\frac{q-2}{q-1}}}-\eta_k.
    \end{align*}
    This proves \eqref{ineq:one_step_decrease_inexact} for both choices.
\end{proof}

\subsection{Proof of Corollary~\ref{corollary:residual_backtracking}}\label{appendix:proof-residual_backtracking}
\begin{proof}[Proof of \cref{corollary:residual_backtracking}]
    Fix a nonterminal iteration $k$, so $\Delta_k>0$ and $\bar\theta_k>0$.
    Since $\nabla^2f(x_k)$ is a fixed positive definite matrix and $\X$ is compact, $D_k\coloneq\max_{u,v\in\X}\|u-v\|_{x_k}<\infty$.
    The proof of \cref{proposition:lmo_complexity} applies with $D$ replaced by $D_k$, so $\eta_k>0$ and $\alpha_{k,j}>0$ guarantee finite termination of each FW inner solve.
    For the away-step FW inner loop, fix any $\alpha\in(0,1]$ and $\eta>0$, and let $G_t$ be the FW gap of $\Psi_\alpha(\cdot;x_k)$.
    Since $k$ is nonterminal, $D_k>0$.
    By the direction-selection rule of \cref{alg:afw}, we have $\langle\nabla\Psi_\alpha(u_t;x_k),d_t\rangle\geq G_t$ and $\|d_t\|_{x_k}\leq D_k$.
    The quadratic expansion \eqref{eq:afw_quadratic_expansion} with $\alpha_k=\alpha$ and exact line minimization give
    \begin{align*}
    \Psi_\alpha(u_t;x_k)-\Psi_\alpha(u_{t+1};x_k)
    \geq
    \begin{cases}
        \dfrac{\alpha G_t^2}{2D_k^2}, & \gamma_t<\gamma_{t,\max},\\[2pt]
        \dfrac{G_t}{2}, & \text{FW step with }\gamma_t=1.
    \end{cases}
    \end{align*}
    Drop steps do not increase the model.
    Each drop step removes one point from the active set, whereas each FW step adds at most one point.
    Hence, the number of drop steps is at most the number of FW steps plus $|\Scal_0|-1$.
    Since $\Scal_0$ is finite, an infinite inner sequence would have infinitely many non-drop steps.
    If $G_t>\eta$ throughout, each such step would decrease the model by at least $\min\{\alpha\eta^2/(2D_k^2),\eta/2\}>0$, contradicting its lower boundedness on $\X$.
    Thus, the AFW inner loop terminates for every $\alpha\in(0,1]$ and $\eta>0$, in particular for $\alpha=\alpha_{k,j}$ and $\eta=\eta_k$.
    
    If no earlier trial is accepted, the search reaches a finite index $J$ with $\tau^J\bar\theta_k\geq B_q\Delta_k^{\frac{q-2}{q-1}}$.
    At this trial, $\theta_{k,J}=B_q\Delta_k^{\frac{q-2}{q-1}}$ and $\alpha_{k,J}=(1+B_q\Delta_k^{\frac{q-2}{q-1}})^{-1}$.
    If $w_{k,J}=0$, then $x_{k,J}=x_k$ and $r_{k+1,J}=0$, so \eqref{ineq:backtracking} holds trivially.
    Otherwise, $w_{k,J}\neq0$.
    Applying \cref{lemma:q_r_relation_inexact} to this trial gives $\|w_{k,J}\|_{x_k}^*\leq\kappa_\omega\Delta_k$, and hence
    \begin{align*}
        \theta_{k,J}=B_q\Delta_k^{\frac{q-2}{q-1}}=(9L_{p,\nu})^{\frac{1}{q-1}}(\kappa_\omega\Delta_k)^{\frac{q-2}{q-1}}\geq(9L_{p,\nu})^{\frac{1}{q-1}}\|w_{k,J}\|_{x_k}^{*\frac{q-2}{q-1}}.
    \end{align*}
    Thus, \cref{proposition:residual_acceptance} gives \eqref{ineq:backtracking}.
    When $p=2$ and $\nu=0$, we have $q=2$ and $\theta_{k,J}=B_2=9L_{2,0}$, so the same argument applies.
    In either case, backtracking terminates no later than trial $J$.
\end{proof}

\subsection{Proof of Proposition~\ref{proposition:potential_decrease}}\label{appendix:proof-potential_decrease}

\begin{lemma}[Uniform positive definiteness and comparison of local dual norms]\label[lemma]{lemma:uniform_hessian_bounds}
    Suppose that \cref{assumption:holder} holds with $q>2$.
    Then, there exist $\mu,L>0$ such that
    \begin{align}
        \mu I\preceq\nabla^2f(x)\preceq LI,\qquad x\in\X. \label{ineq:uniform_hessian_bounds}
    \end{align}
    Moreover, for any $x,y\in\X$ and $r\in\R^n$,
    \begin{align}
        \|r\|_x^{*2}\geq\frac{\mu}{L}\|r\|_y^{*2}. \label{ineq:grad_norm_ratio}
    \end{align}
\end{lemma}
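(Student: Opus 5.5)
The lower bound $\mu I\preceq\nabla^2 f(x)$ should come from positive definiteness, continuity and compactness. The upper bound $\nabla^2 f(x)\preceq LI$ should come from the same continuity and compactness. The dual-norm comparison \eqref{ineq:grad_norm_ratio} is then a one-line consequence of the two bounds.

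\textbf{Step 1: continuity of the Hessian.} Under \cref{assumption:holder} with $q>2$, I will first argue that $x\mapsto\nabla^2 f(x)$ is continuous on $\X$.

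If $p=3$, then $f$ is three times differentiable. Hence $\nabla^2 f$ is differentiable, and in particular continuous. Alternatively, $\Gamma_{3,\X}<\infty$ together with the mean value theorem gives $\|\nabla^2f(x)-\nabla^2f(y)\|_2\leq C\|x-y\|$ on $\X$.

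If $p=2$, then $q>2$ forces $\nu>0$. Here I use the H\"older condition $\|\nabla^2 f(x)-\nabla^2 f(y)\|_{\op}\leq L_{2,\nu}\|x-y\|_x^\nu$ directly, with the norms taken at the fixed base point $x$. Since $\nabla^2 f(x)$ is a fixed positive definite matrix, these local norms are equivalent to Euclidean ones with constants depending on $x$. So for each fixed $x$, $\nabla^2 f(y)\to\nabla^2 f(x)$ as $y\to x$ within $\X$.

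This pointwise statement is all that is needed. (Strictly speaking, twice differentiability alone does not give a continuous Hessian, which is why the assumption is used.)

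\textbf{Step 2: uniform eigenvalue bounds.} The maps $x\mapsto\lambda_{\min}(\nabla^2 f(x))$ and $x\mapsto\lambda_{\max}(\nabla^2 f(x))$ are continuous, since eigenvalues of symmetric matrices are $1$-Lipschitz in the spectral norm (Weyl). On the compact set $\X$, they therefore attain their minimum and maximum. Define
\begin{align*}
    \mu\coloneq\min_{x\in\X}\lambda_{\min}(\nabla^2 f(x)),\qquad L\coloneq\max_{x\in\X}\lambda_{\max}(\nabla^2 f(x)).
\end{align*}
Then $\mu>0$ because the Hessian is positive definite at every point of $\X$, and $L<\infty$ by compactness. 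This gives \eqref{ineq:uniform_hessian_bounds}.

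\textbf{Step 3: comparison of dual norms.} From \eqref{ineq:uniform_hessian_bounds}, inverting reverses the order, so
\begin{align*}
    L^{-1}I\preceq\nabla^2 f(x)^{-1}\preceq\mu^{-1}I,\qquad x\in\X.
\end{align*}
Hence, for any $x,y\in\X$ and $r\in\R^n$,
\begin{align*}
    \|r\|_x^{*2}=\langle r,\nabla^2 f(x)^{-1}r\rangle\geq\frac{\|r\|^2}{L}=\frac{\mu}{L}\cdot\frac{\|r\|^2}{\mu}\geq\frac{\mu}{L}\langle r,\nabla^2 f(y)^{-1}r\rangle=\frac{\mu}{L}\|r\|_y^{*2}.
\end{align*}
This is exactly \eqref{ineq:grad_norm_ratio}.

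The only subtle point is Step 1, because the H\"older norm is measured at a varying base point. I handle this by fixing the base point $x$ and invoking norm equivalence at that single point, which is enough for continuity.
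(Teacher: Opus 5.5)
Your proposal is correct and follows essentially the same route as the paper. Continuity of $\nabla^2 f$ on $\X$ (from $\nu>0$ when $p=2$, and from three-times differentiability when $p=3$), combined with pointwise positive definiteness and compactness, yields $\mu$ and $L$, and the same chain $\|r\|_x^{*2}\geq\|r\|^2/L\geq(\mu/L)\|r\|_y^{*2}$ gives the comparison. Your Step 1 spells out the fixed-base-point norm equivalence that the paper leaves implicit; your \emph{alternative} route via $\Gamma_{3,\X}$ is unnecessary and would itself need Hessian bounds to pass from local to Euclidean norms, but your main argument does not rely on it.
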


\begin{proof}
    If $p=2$ and $\nu>0$, then \cref{assumption:holder} implies that $\nabla^2f$ is continuous, while $\nabla^2f$ is also continuous when $p=3$.
    Moreover, $\nabla^2f(x)\succ0$ holds by the assumption on $f$.
    Therefore, by the compactness of $\X$, there exist $\mu,L>0$ satisfying \eqref{ineq:uniform_hessian_bounds}.
    Hence,
    \begin{align*}
        \|r\|_x^{*2}=\langle\nabla^2f(x)^{-1}r,r\rangle\geq\frac{1}{L}\|r\|^2\geq\frac{\mu}{L}\langle\nabla^2f(y)^{-1}r,r\rangle=\frac{\mu}{L}\|r\|_y^{*2}.
    \end{align*}
\end{proof}

\begin{proof}[Proof of \cref{proposition:potential_decrease}]
    If the method terminates at initialization, there is no update to consider.
    Otherwise, $g_0>0$ implies $\Delta_0>0$, and $\Delta_k\geq\rho^k\Delta_0>0$ for every generated iterate.
    By \cref{theorem:inexact_one_step}, every RBNFW update satisfies \eqref{ineq:one_step_decrease_inexact}.
    By \eqref{ineq:one_step_decrease_inexact} and \eqref{ineq:grad_norm_ratio},
    \begin{align}
        f(x_k)-f(x_{k+1})
        &\geqby{\eqref{ineq:one_step_decrease_inexact}}\frac{c_p}{B_q}\frac{\|r_{k+1}\|_{x_k}^{*2}}{\Delta_k^{\frac{q-2}{q-1}}}-\eta_k
        \geqby{\eqref{ineq:grad_norm_ratio}}\frac{c_p\mu}{B_qL}\frac{\|r_{k+1}\|_{x_{k+1}}^{*2}}{\Delta_k^{\frac{q-2}{q-1}}}-\eta_k. \label{ineq:decrease_R_Delta}
    \end{align}
    Moreover, the accuracy \eqref{eq:global_inner_accuracy} gives
    \begin{align}
        \eta_k
        =\frac{\omega\Delta_k^2}{1+B_q\Delta_k^{\frac{q-2}{q-1}}}
        \leq\frac{\omega}{B_q}\Delta_k^{2-\frac{q-2}{q-1}}
        =\frac{\omega}{B_q}\Delta_k^{\frac{q}{q-1}}. \label{ineq:eta_Delta_power}
    \end{align}

    First, suppose that $\|r_{k+1}\|_{x_{k+1}}^*\geq\rho\Delta_k$.
    Then, $\Delta_{k+1}=\|r_{k+1}\|_{x_{k+1}}^*$.
    Since $\Delta_{k+1}\geq\rho\Delta_k$ and $\frac{q}{q-1}=2-\frac{q-2}{q-1}$, we have
    \begin{align*}
        \Delta_k^{\frac{q}{q-1}}
        =\frac{\Delta_k^2}{\Delta_k^{\frac{q-2}{q-1}}}
        \leq\rho^{-2}\frac{\Delta_{k+1}^2}{\Delta_k^{\frac{q-2}{q-1}}},
        \qquad
        \Delta_{k+1}^{\frac{q}{q-1}}
        =\frac{\Delta_{k+1}^2}{\Delta_{k+1}^{\frac{q-2}{q-1}}}
        \leq\rho^{-\frac{q-2}{q-1}}\frac{\Delta_{k+1}^2}{\Delta_k^{\frac{q-2}{q-1}}}.
    \end{align*}
    Hence, by \eqref{ineq:decrease_R_Delta} and \eqref{ineq:eta_Delta_power},
    \begin{align*}
        \Phi(x_k)-\Phi(x_{k+1})
        &=f(x_k)-f(x_{k+1})+\chi\left(\Delta_k^{\frac{q}{q-1}}-\Delta_{k+1}^{\frac{q}{q-1}}\right)\\
        &\geq\frac{c_p\mu}{B_qL}\frac{\Delta_{k+1}^2}{\Delta_k^{\frac{q-2}{q-1}}}-\frac{\omega}{B_q}\Delta_k^{\frac{q}{q-1}}-\chi\Delta_{k+1}^{\frac{q}{q-1}}\\
        &\geq\left(\frac{c_p\mu}{B_qL}-\frac{\omega}{B_q\rho^2}-\chi\rho^{-\frac{q-2}{q-1}}\right)\frac{\Delta_{k+1}^2}{\Delta_k^{\frac{q-2}{q-1}}}.
    \end{align*}

    Next, suppose that $\|r_{k+1}\|_{x_{k+1}}^*<\rho\Delta_k$.
    Then, $\Delta_{k+1}=\rho\Delta_k$.
    Moreover,
    \begin{align*}
        \Delta_k^{\frac{q}{q-1}}
        =\frac{1}{\rho^2}\frac{\rho^2\Delta_k^2}{\Delta_k^{\frac{q-2}{q-1}}}
        =\frac{1}{\rho^2}\frac{\Delta_{k+1}^2}{\Delta_k^{\frac{q-2}{q-1}}}.
    \end{align*}
    Therefore, by \eqref{ineq:decrease_R_Delta} and \eqref{ineq:eta_Delta_power},
    \begin{align*}
        \Phi(x_k)-\Phi(x_{k+1})
        &=f(x_k)-f(x_{k+1})+\chi\left(\Delta_k^{\frac{q}{q-1}}-\Delta_{k+1}^{\frac{q}{q-1}}\right)\\
        &\geq-\frac{\omega}{B_q}\Delta_k^{\frac{q}{q-1}}+\chi\left(1-\rho^{\frac{q}{q-1}}\right)\Delta_k^{\frac{q}{q-1}}\\
        &=\frac{\chi\left(1-\rho^{\frac{q}{q-1}}\right)-\omega/B_q}{\rho^2}\frac{\Delta_{k+1}^2}{\Delta_k^{\frac{q-2}{q-1}}}.
    \end{align*}

    Condition \eqref{ineq:omega_condition} is equivalent to the interval
    \begin{align}
        \frac{\omega}{B_q(1-\rho^{\frac{q}{q-1}})}
        <\chi<
        \frac{\rho^{\frac{q-2}{q-1}}}{B_q}\left(\frac{c_p\mu}{L}-\frac{\omega}{\rho^2}\right) \label{ineq:chi_interval}
    \end{align}
    being nonempty.
    Choose $\chi$ in this interval.
    Multiplying the upper bound in \eqref{ineq:chi_interval} by $\rho^{-\frac{q-2}{q-1}}$ and rearranging gives
    \begin{align*}
        \frac{c_p\mu}{B_qL}-\frac{\omega}{B_q\rho^2}-\chi\rho^{-\frac{q-2}{q-1}}>0.
    \end{align*}
    Rearranging the lower bound in \eqref{ineq:chi_interval} gives
    \begin{align*}
        \frac{\chi\left(1-\rho^{\frac{q}{q-1}}\right)-\omega/B_q}{\rho^2}>0.
    \end{align*}
    Hence, 
    \begin{align}
        c_\chi\coloneq\min\left\{\frac{c_p\mu}{B_qL}-\frac{\omega}{B_q\rho^2}-\chi\rho^{-\frac{q-2}{q-1}}, \frac{\chi(1-\rho^{\frac{q}{q-1}})-\omega/B_q}{\rho^2}\right\}>0, \label{eq:c_chi}
    \end{align}
    and \eqref{ineq:potential_decrease} holds in both cases.
\end{proof}

\subsection{Proof of Theorem~\ref{theorem:global_linear}}\label{appendix:proof-global_linear}

\begin{lemma}[Objective error bound in terms of the residual]\label[lemma]{lemma:objective_error_residual}
    Suppose that $f$ is $\mu$-strongly convex and $L$-smooth on $\X$.
    Let $x\in\X$, $s\in\R^n$, and $\eta\geq0$ satisfy
    \begin{align}
        \max_{v\in\X}\langle s,v-x\rangle\leq\eta, \label{ineq:eta_bound}
    \end{align}
    and let $r\coloneq\nabla f(x)+s$.
    Then,
    \begin{align*}
        f(x)-f(x^\star)\leq\frac{L}{2\mu}\|r\|_x^{*2}+\eta.
    \end{align*}
\end{lemma}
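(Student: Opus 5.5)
We plan to combine convexity of $f$ at $x$, the residual decomposition $\nabla f(x)=r-s$, and the approximate optimality \eqref{ineq:eta_bound}. By convexity,
\begin{align*}
    f(x)-f(x^\star)\leq\langle\nabla f(x),x-x^\star\rangle=\langle r,x-x^\star\rangle+\langle s,x^\star-x\rangle\leq\langle r,x-x^\star\rangle+\eta,
\end{align*}
where the last step applies \eqref{ineq:eta_bound} with $v=x^\star\in\X$. It then remains to bound the linear term $\langle r,x-x^\star\rangle$ by a multiple of $\|r\|_x^{*2}$ plus a fraction of the primal gap that can be absorbed.

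For the linear term we use Cauchy--Schwarz in the local metric, $\langle r,x-x^\star\rangle\leq\|r\|_x^*\|x-x^\star\|_x$, and then Young's inequality $ab\leq\frac{c}{2}a^2+\frac{1}{2c}b^2$ with a constant $c>0$ to be tuned. We control $\|x-x^\star\|_x^2\leq L\|x-x^\star\|^2$ via \eqref{ineq:uniform_hessian_bounds}. Strong convexity together with first-order optimality of $x^\star$ over $\X$ gives $\|x-x^\star\|^2\leq\frac{2}{\mu}(f(x)-f(x^\star))$. Hence $\|x-x^\star\|_x^2\leq\frac{2L}{\mu}(f(x)-f(x^\star))$.

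Substituting this bound, we obtain
\begin{align*}
    f(x)-f(x^\star)\leq\frac{c}{2}\|r\|_x^{*2}+\frac{L}{c\mu}\left(f(x)-f(x^\star)\right)+\eta.
\end{align*}
Choosing $c=2L/\mu$ makes the coefficient of the primal gap on the right equal to $1/2$. After absorbing it, this yields $f(x)-f(x^\star)\leq\frac{2L}{\mu}\|r\|_x^{*2}+2\eta$, which is weaker than the claim by a factor of $4$ in the first term and $2$ in the second.

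The main obstacle is therefore the constants; in particular, the coefficient of $\eta$ must be exactly $1$. To get that sharp form, we instead avoid absorption. We define $\phi(y)\coloneq f(y)+\langle s,y\rangle$ and use strong convexity of $f$ in the Hessian-at-$x$ sense:
\begin{align*}
    f(x^\star)\geq f(x)+\langle\nabla f(x),x^\star-x\rangle+\frac{\mu}{2L}\|x^\star-x\|_x^2,
\end{align*}
which holds since $\mu\|\cdot\|^2\geq\frac{\mu}{L}\|\cdot\|_x^2$. Then
\begin{align*}
    f(x)-f(x^\star)\leq\langle r,x-x^\star\rangle-\frac{\mu}{2L}\|x-x^\star\|_x^2+\langle s,x^\star-x\rangle.
\end{align*}
Here the last term is at most $\eta$ by \eqref{ineq:eta_bound}. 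The first two terms are at most $\sup_d\{\|r\|_x^*\|d\|_x-\frac{\mu}{2L}\|d\|_x^2\}=\frac{L}{2\mu}\|r\|_x^{*2}$. This gives exactly the stated bound. The only point to verify carefully is the strong-convexity inequality in the $\|\cdot\|_x$ norm, which follows from $\nabla^2f(x)\preceq LI$.
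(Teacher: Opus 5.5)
Your final argument is correct and is essentially the paper's proof: strong convexity at $x$, the split $\nabla f(x)=r-s$ with $\langle s,x^\star-x\rangle\le\eta$, local-norm Cauchy--Schwarz, and maximization of a concave quadratic to obtain $\frac{L}{2\mu}\|r\|_x^{*2}+\eta$. The only cosmetic difference is where the factor $L$ goes: you convert the strong-convexity term via $\|d\|_x^2\le L\|d\|^2$, whereas the paper bounds $\|x-x^\star\|_x\le\sqrt{L}\|x-x^\star\|$ and applies Young's inequality in the Euclidean norm. Your first absorption attempt was an unnecessary detour.
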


\begin{proof}
    By the $\mu$-strong convexity of $f$,
    \begin{align*}
        f(x)-f(x^\star)
        &\leq\langle\nabla f(x),x-x^\star\rangle-\frac{\mu}{2}\|x-x^\star\|^2\\
        &=\langle r,x-x^\star\rangle+\langle s,x^\star-x\rangle-\frac{\mu}{2}\|x-x^\star\|^2\\
        &\leqby{(a)}\sqrt{L}\|r\|_x^*\|x-x^\star\|+\eta-\frac{\mu}{2}\|x-x^\star\|^2\\
        &\leqby{(b)}\frac{L}{2\mu}\|r\|_x^{*2}+\eta.
    \end{align*}
    In (a), we used \eqref{ineq:eta_bound}, the Cauchy--Schwarz inequality, and $\nabla^2f(x)\preceq LI$ to obtain
    \begin{align*}
        \langle r,x-x^\star\rangle\leq\|r\|_x^*\|x-x^\star\|_x\leq\sqrt{L}\|r\|_x^*\|x-x^\star\|.
    \end{align*}
    In (b), we used Young's inequality
    \begin{align*}
        \sqrt{L}\|r\|_x^*\|x-x^\star\|\leq\frac{L}{2\mu}\|r\|_x^{*2}+\frac{\mu}{2}\|x-x^\star\|^2.
    \end{align*}
\end{proof}

\begin{proof}[Proof of \cref{theorem:global_linear}]
    If $g_0=0$, the method terminates immediately and the bound at $k=0$ is trivial.
    Otherwise, $\Delta_0>0$, and \cref{corollary:residual_backtracking} guarantees that every outer update is well defined.
    Let $i\geq0$ be an index for which the iterate is generated.
    For $i=0$, set $s_{-1}=0$ and $\eta_{-1}=0$, so \eqref{ineq:eta_bound} holds immediately.
    For $i\geq1$, \eqref{ineq:fw_termination} at iteration $i-1$ yields \eqref{ineq:eta_bound} with $x=x_i$, $s=s_{i-1}$, and $\eta=\eta_{i-1}$.
    Therefore, applying \cref{lemma:objective_error_residual} with $r=r_i$ gives
    \begin{align*}
        f(x_i)-f(x^\star)\leq\frac{L}{2\mu}\|r_i\|_{x_i}^{*2}+\eta_{i-1}.
    \end{align*}
    For $i\geq1$, the accuracy and $\Delta_i\geq\rho\Delta_{i-1}$ imply
    \begin{align*}
        \eta_{i-1}=\frac{\omega\Delta_{i-1}^2}{1+B_q\Delta_{i-1}^{\frac{q-2}{q-1}}}\leq\omega\Delta_{i-1}^2\leq\omega\rho^{-2}\Delta_i^2.
    \end{align*}
    The same bound holds for $i=0$ because $\eta_{-1}=0$.
    Therefore,
    \begin{align}
        f(x_i)-f(x^\star)\leq\left(\frac{L}{2\mu}+\omega\rho^{-2}\right)\Delta_i^2. \label{ineq:error_bound_delta}
    \end{align}

    We next use \eqref{ineq:potential_decrease}.
    Since $\chi\Delta_i^{\frac{q}{q-1}}\leq\Phi(x_i)\leq\Phi(x_0)$, we have $\Delta_i\leq\left(\frac{\Phi(x_0)}{\chi}\right)^{\frac{q-1}{q}}$.
    Hence, \eqref{ineq:error_bound_delta} yields
    \begin{align*}
        f(x_i)-f(x^\star)
        \leq\left(\frac{L}{2\mu}+\omega\rho^{-2}\right)\Delta_i^2
        \leq\left(\frac{L}{2\mu}+\omega\rho^{-2}\right)\left(\frac{\Phi(x_0)}{\chi}\right)^{\frac{q-2}{q}}\Delta_i^{\frac{q}{q-1}}.
    \end{align*}
    Therefore,
    \begin{align}
        \Phi(x_i)
        =f(x_i)-f(x^\star)+\chi\Delta_i^{\frac{q}{q-1}}
        \leq\left(\chi+\left(\frac{L}{2\mu}+\omega\rho^{-2}\right)\left(\frac{\Phi(x_0)}{\chi}\right)^{\frac{q-2}{q}}\right)\Delta_i^{\frac{q}{q-1}}. \label{ineq:phi_delta_power}
    \end{align}

    Now let $i$ be a nonterminal index, so $x_{i+1}$ is generated and $\Delta_i,\Delta_{i+1}>0$.
    Since $\Delta_{i+1}\geq\rho\Delta_i$ and $\frac{q}{q-1}=2-\frac{q-2}{q-1}$,
    \begin{align*}
        \frac{\Delta_{i+1}^2}{\Delta_i^{\frac{q-2}{q-1}}}
        =\Delta_{i+1}^{2-\frac{q-2}{q-1}}\left(\frac{\Delta_{i+1}}{\Delta_i}\right)^{\frac{q-2}{q-1}}
        =\Delta_{i+1}^{\frac{q}{q-1}}\left(\frac{\Delta_{i+1}}{\Delta_i}\right)^{\frac{q-2}{q-1}}
        \geq\rho^{\frac{q-2}{q-1}}\Delta_{i+1}^{\frac{q}{q-1}}.
    \end{align*}
    Moreover, \eqref{ineq:phi_delta_power} gives
    \begin{align*}
        \Delta_{i+1}^{\frac{q}{q-1}}
        \geq\frac{\Phi(x_{i+1})}{\chi+\left(\frac{L}{2\mu}+\omega\rho^{-2}\right)\left(\frac{\Phi(x_0)}{\chi}\right)^{\frac{q-2}{q}}}.
    \end{align*}
    Consequently, \eqref{ineq:potential_decrease} implies
    \begin{align*}
        \Phi(x_i)-\Phi(x_{i+1})
        \geq\frac{c_\chi\rho^{\frac{q-2}{q-1}}}{\chi+\left(\frac{L}{2\mu}+\omega\rho^{-2}\right)\left(\frac{\Phi(x_0)}{\chi}\right)^{\frac{q-2}{q}}}\Phi(x_{i+1}).
    \end{align*}
    Equivalently,
    \begin{align*}
        \Phi(x_{i+1})
        \leq\left(1+\frac{c_\chi\rho^{\frac{q-2}{q-1}}}{\chi+\left(\frac{L}{2\mu}+\omega\rho^{-2}\right)\left(\frac{\Phi(x_0)}{\chi}\right)^{\frac{q-2}{q}}}\right)^{-1}\Phi(x_i).
    \end{align*}
    Iterating this inequality for $i=0,1,\dots,k-1$ yields the upper bound in \eqref{ineq:global_linear_obj}, while the lower bound follows from $f(x_k)-f(x^\star)\leq\Phi(x_k)$.
\end{proof}

\section{Proofs of Section~\ref{subsec:local-convergence}}\label{appendix:proofs-local-convergence}
After a finite number of iterations, we switch to the full-step regime by setting $\alpha_k=1$ and solving each subproblem until the stopping criterion in~\eqref{eq:model_decrease_stopping} is satisfied.
The local analysis applies to RBNFW after switching.

\subsection{Proof of Lemma~\ref{lemma:finite_inner_iterations_error_estimate}}\label{appendix:proof-finite_inner_iterations_error_estimate}
\begin{proof}[Proof of~\cref{lemma:finite_inner_iterations_error_estimate}]
    Let $M_k^*\coloneq f(x_k)-\Psi_{1}(\widehat x_{k+1};x_k)$.
    Since $g_k>0$, there exists $v\in\X$ such that $\langle\nabla f(x_k),v-x_k\rangle<0$.
    Hence, for all $0<\gamma<\min\left\{1,\frac{-2\langle\nabla f(x_k),v-x_k\rangle}{\|v-x_k\|_{x_k}^2}\right\}$, the point $x_k+\gamma(v-x_k)$ is feasible and
    \begin{align*}
        \Psi_{1}(x_k+\gamma(v-x_k);x_k)-f(x_k)
        =\gamma\langle\nabla f(x_k),v-x_k\rangle+\frac{\gamma^2}{2}\|v-x_k\|_{x_k}^2<0,
    \end{align*}
    and therefore $M_k^*>0$ due to $\Psi_1(\widehat x_{k+1};x_k) \leq \Psi_1(x_k + \gamma(v-x_k);x_k) < f(x_k)$.

    Since the inner updates use exact line minimization,
    \begin{align*}
        \Psi_{1}(u_{t+1};x_k)\leq\Psi_{1}(u_t;x_k)\leq\Psi_{1}(u_0;x_k)=f(x_k),
    \end{align*}
    and hence $0\leq M_t\leq M_k^*$.
    The convexity of $\Psi_1(\cdot;x_k)$ gives
    \begin{align*}
        0\leq M_k^*-M_t
        =\Psi_{1}(u_t;x_k)-\Psi_{1}(\widehat x_{k+1};x_k)
        \leq G_t.
    \end{align*}
    Fix any $a>0$ and suppose, for contradiction, that $\ell_k(a)$ is infinite, so the inner updates continue indefinitely.
    For FW, \cref{proposition:lmo_complexity} guarantees finite termination for every positive FW-gap tolerance.
    For away-step FW, the same follows from the inner-loop argument in the proof of \cref{corollary:residual_backtracking} with $\alpha=1$.
    Since the update rules do not depend on the stopping tolerance, applying these arguments to the same inner sequence with positive tolerances tending to zero yields inner indices $\{t_j\}$ such that $G_{t_j}\to0$.
    Consequently, $M_{t_j}\to M_k^*>0$, and, for any $a>0$,
    \begin{align*}
        \left(\frac{M_{t_j}}{\|\nabla f(x_k)\|}\right)^a\to\left(\frac{M_k^*}{\|\nabla f(x_k)\|}\right)^a>0.
    \end{align*}
    Therefore, the condition in \eqref{eq:model_decrease_stopping} holds for all sufficiently large $j$, a contradiction.
    Hence, $\ell_k(a)$ is finite.
    Furthermore, by the $\mu$-strong convexity of $\Psi_{1}(\cdot;x_k)$ and \eqref{eq:model_decrease_stopping},
    \begin{align*}
        \frac{\mu}{2}\|u_{\ell_k(a)}-\widehat x_{k+1}\|^2
        \leq G_{\ell_k(a)}
        \leq\left(\frac{M_{\ell_k(a)}}{\|\nabla f(x_k)\|}\right)^a.
    \end{align*}
    On the other hand,
    \begin{align*}
        M_{\ell_k(a)}
        \leq M_k^*
        =-\langle\nabla f(x_k),\widehat x_{k+1}-x_k\rangle-\frac12\|\widehat x_{k+1}-x_k\|_{x_k}^2
        \leq\|\nabla f(x_k)\|\|\widehat x_{k+1}-x_k\|,
    \end{align*}
    where the last inequality uses the Cauchy--Schwarz inequality and $-\frac12\|\widehat x_{k+1}-x_k\|_{x_k}^2<0$.
    Combining the last two displays yields \eqref{ineq:model_decrease_gap_bound} and \eqref{ineq:model_decrease_inner_error}.
\end{proof}

\subsection{Proof of Theorem~\ref{theorem:local_convergence_p2}}\label{appendix:proof-local_convergence_p2}
\begin{lemma}[Vanishing of the FW gap]\label[lemma]{lemma:fw_gap_convergence}
    Let $\{x_k\}\subset\X$ satisfy $f(x_k)-f(x^\star)\to0$.
    Then, $g_k\to0$.
\end{lemma}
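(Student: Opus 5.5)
Since $f(x_k)-f(x^\star)\to0$, the first step is to upgrade this to convergence of the iterates, $x_k\to x^\star$. I would rely on the uniform Hessian bounds of \cref{lemma:uniform_hessian_bounds}, which make $f$ $\mu$-strongly convex on $\X$. The optimality condition $\langle\nabla f(x^\star),x-x^\star\rangle\geq0$ for all $x\in\X$ then gives
\begin{align*}
    f(x_k)-f(x^\star)\geq\langle\nabla f(x^\star),x_k-x^\star\rangle+\frac{\mu}{2}\|x_k-x^\star\|^2\geq\frac{\mu}{2}\|x_k-x^\star\|^2,
\end{align*}
so $\|x_k-x^\star\|\to0$. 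If one prefers not to use strong convexity, a compactness argument works instead. Every cluster point $\bar x$ of $\{x_k\}$ lies in $\X$ and satisfies $f(\bar x)=f(x^\star)$ by continuity. The minimizer is unique because $\nabla^2f\succ0$ on $\X$, so $f$ is strictly convex there. Hence $\bar x=x^\star$, and boundedness gives $x_k\to x^\star$.

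Next, I would decompose the FW gap around $x^\star$. For any $v\in\X$,
\begin{align*}
    \langle\nabla f(x_k),x_k-v\rangle=\langle\nabla f(x_k)-\nabla f(x^\star),x_k-v\rangle+\langle\nabla f(x^\star),x_k-x^\star\rangle+\langle\nabla f(x^\star),x^\star-v\rangle.
\end{align*}
The last term is nonpositive by first-order optimality of $x^\star$. The first term is bounded by $\|\nabla f(x_k)-\nabla f(x^\star)\|\,d_{\X}$, and the middle term by $\|\nabla f(x^\star)\|\,\|x_k-x^\star\|$. Taking the maximum over $v\in\X$ yields
\begin{align*}
    0\leq g_k\leq d_{\X}\|\nabla f(x_k)-\nabla f(x^\star)\|+\|\nabla f(x^\star)\|\,\|x_k-x^\star\|.
\end{align*}
Both terms tend to zero because $\nabla f$ is continuous (indeed $L$-Lipschitz on $\X$) and $x_k\to x^\star$, which gives $g_k\to0$.

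The only real obstacle is the first step, passing from objective convergence to iterate convergence. It needs either strong convexity or uniqueness of the minimizer together with compactness. Both are available from the standing assumptions: $\nabla^2f(x)\succ0$ on the compact set $\X$, and \cref{lemma:uniform_hessian_bounds} applies under the $q>2$ assumption in force for the local theorems. The rest is routine.
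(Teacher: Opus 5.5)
Your proof is correct and follows essentially the same route as the paper: strong convexity gives $\frac{\mu}{2}\|x_k-x^\star\|^2\leq f(x_k)-f(x^\star)\to0$, and then $g_k\to0$ because the FW gap is continuous in $x$ and vanishes at $x^\star$. Your explicit bound $0\leq g_k\leq d_{\X}\|\nabla f(x_k)-\nabla f(x^\star)\|+\|\nabla f(x^\star)\|\,\|x_k-x^\star\|$ simply makes quantitative the continuity of $x\mapsto\max_{v\in\X}\langle\nabla f(x),x-v\rangle$, which the paper invokes directly; your alternative compactness-and-uniqueness argument for the first step is also valid.
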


\begin{proof}
    By the strong convexity of $f$, $\frac{\mu}{2}\|x_k-x^\star\|^2\leq f(x_k)-f(x^\star)\to0$.
    Hence, $x_k\to x^\star$.
    Since $\X$ is compact and $\nabla f$ is continuous, the FW-gap function $x\mapsto\max_{v\in\X}\langle\nabla f(x),x-v\rangle$ is continuous.
    Moreover, the optimality of $x^\star$ implies that its FW gap is zero.
    Therefore, $g_k\to0$.
\end{proof}

\begin{proof}[Proof of \cref{theorem:local_convergence_p2}]
    If the switching criterion in \eqref{eq:p2_model_decrease_switch} were never satisfied, RBNFW with \eqref{eq:global_inner_accuracy} would be used at every iteration.
    Then, \cref{theorem:global_linear,lemma:fw_gap_convergence} would give $g_k\to0$, contradicting the failure of the switching criterion.
    Therefore, $K_2$ is finite.

    If $g_j=0$ for some $j\geq K_2$, then the strong convexity of $f$ gives $x_j=x^\star$, so the sequence reaches the solution in finitely many iterations.
    Hence, it remains to consider the case $g_k>0$ for every $k\geq K_2$.
    Let $k\geq K_2$, define $e_k\coloneq x_k-x^\star$, and let $\widehat x_{k+1}$ be the exact minimizer of $\Psi_1(\cdot;x_k)$ over $\X$.
    By the optimality conditions of $\widehat x_{k+1}$ and $x^\star$,
    \begin{align*}
        \|\widehat x_{k+1}-x^\star\|_{x_k}^2\leq\left\langle\nabla^2f(x_k)e_k-(\nabla f(x_k)-\nabla f(x^\star)),\widehat x_{k+1}-x^\star\right\rangle.
    \end{align*}
    By the Cauchy--Schwarz inequality and the H\"older continuity of the Hessian,
    \begin{align*}
        \|\widehat x_{k+1}-x^\star\|_{x_k}
        &\leq\left\|\int_0^1\left(\nabla^2f(x_k)-\nabla^2f(x^\star+te_k)\right)e_k\,dt\right\|_{x_k}^*\\
        &\leq L_{2,\nu}\|e_k\|_{x_k}^{1+\nu}\int_0^1(1-t)^\nu\,dt
        =\frac{L_{2,\nu}}{1+\nu}\|e_k\|_{x_k}^{1+\nu}.
    \end{align*}
    Therefore,
    \begin{align}
        \|\widehat x_{k+1}-x^\star\|\leq\frac{L_{2,\nu}L^{\frac{1+\nu}{2}}}{(1+\nu)\sqrt{\mu}}\|e_k\|^{1+\nu}. \label{ineq:p2_model_decrease_exact}
    \end{align}

    Using \eqref{ineq:p2_model_decrease_exact}, $\|e_k\|\leq D/\sqrt{\mu}$, and the triangle inequality,
    \begin{align*}
        \|\widehat x_{k+1}-x_k\|
        \leq\|\widehat x_{k+1}-x^\star\|+\|x^\star-x_k\|
        \leq\left(1+\frac{L_{2,\nu}L^{\frac{1+\nu}{2}}D^\nu}{(1+\nu)\mu^{\frac{1+\nu}{2}}}\right)\|e_k\|.
    \end{align*}
    Combining this inequality with \eqref{ineq:model_decrease_inner_error} with $a=2(1+\nu)$ gives
    \begin{align*}
        \|x_{k+1}-\widehat x_{k+1}\|
        \leq\sqrt{\frac{2}{\mu}}\left(1+\frac{L_{2,\nu}L^{\frac{1+\nu}{2}}D^\nu}{(1+\nu)\mu^{\frac{1+\nu}{2}}}\right)^{1+\nu}\|e_k\|^{1+\nu}.
    \end{align*}
    Hence, by \eqref{ineq:p2_model_decrease_exact} and the triangle inequality,
    \begin{align*}
        \|x_{k+1}-x^\star\|
        \leq\|x_{k+1}-\widehat x_{k+1}\|+\|\widehat x_{k+1}-x^\star\|
        \leq C_2\|e_k\|^{1+\nu}.
    \end{align*}
    This proves \eqref{ineq:p2_model_decrease_local_order}.

    By \eqref{eq:p2_model_decrease_switch}, strong convexity, and convexity,
    \begin{align*}
        \frac{\mu}{2}\|e_{K_2}\|^2\leq f(x_{K_2})-f(x^\star)\leq g_{K_2}\leq\frac{\mu}{2}(2C_2)^{-\frac{2}{\nu}}.
    \end{align*}
    Therefore, the switching condition implies $C_2\|e_{K_2}\|^\nu\leq\frac12$.
    Moreover, if $C_2\|e_k\|^\nu\leq1/2$, then
    \begin{align*}
        \|e_{k+1}\|\leq C_2\|e_k\|^{1+\nu}\leq\frac12\|e_k\|,
        \qquad
        C_2\|e_{k+1}\|^\nu\leq2^{-\nu}C_2\|e_k\|^\nu\leq\frac12.
    \end{align*}
    Thus, by induction, $x_k\to x^\star$ and $\|e_{k+1}\|\leq\frac12\|e_k\|$ for every $k\geq K_2$.

    Since $\alpha_k=1$, the definition of $s_k$, \cref{lemma:holder}, $\nabla^2f(x_k)\preceq LI$, and the preceding contraction give
    \begin{align*}
        \|\nabla f(x_{k+1})+s_k\|_{x_k}^*
        \leq\frac{L_{2,\nu}}{1+\nu}\|x_{k+1}-x_k\|_{x_k}^{1+\nu}
        \leq\frac{L_{2,\nu}}{1+\nu}\left(\frac{3\sqrt{L}}{2}\right)^{1+\nu}\|e_k\|^{1+\nu}.
    \end{align*}
    Moreover, \eqref{ineq:model_decrease_gap_bound} and the bound on $\|\widehat x_{k+1}-x_k\|$ established above imply
    \begin{align*}
        G_{\ell_k(2(1+\nu))}
        \leq\left(1+\frac{L_{2,\nu}L^{(1+\nu)/2}D^\nu}{(1+\nu)\mu^{(1+\nu)/2}}\right)^{2(1+\nu)}\|e_k\|^{2(1+\nu)}.
    \end{align*}
    Since $\nabla\Psi_1(x_{k+1};x_k)=-s_k$, we have $\max_{v\in\X}\langle s_k,v-x_{k+1}\rangle=G_{\ell_k(2(1+\nu))}$.
    Applying \cref{lemma:objective_error_residual} with $x=x_{k+1}$, $s=s_k$, and $\eta=G_{\ell_k(2(1+\nu))}$, and using \eqref{ineq:grad_norm_ratio} in the form $\|\nabla f(x_{k+1})+s_k\|_{x_{k+1}}^{*2}\leq\frac{L}{\mu}\|\nabla f(x_{k+1})+s_k\|_{x_k}^{*2}$ together with $\frac{\mu}{2}\|e_k\|^2\leq f(x_k)-f(x^\star)$, we obtain, for some constant $\widetilde C_2>0$ independent of $k$,
    \begin{align*}
        f(x_{k+1})-f(x^\star)
        \leq\frac{L}{2\mu}\|\nabla f(x_{k+1})+s_k\|_{x_{k+1}}^{*2}+G_{\ell_k(2(1+\nu))}
        \leq\widetilde C_2\left(f(x_k)-f(x^\star)\right)^{1+\nu}.
    \end{align*}
    This proves \eqref{ineq:p2_model_decrease_local_objective} and completes the proof.
\end{proof}

\subsection{Proof of Theorem~\ref{theorem:local_convergence_p3}}\label{appendix:proof-local_convergence_p3}
\begin{proof}[Proof of \cref{theorem:local_convergence_p3}]
    If the switching criterion in \eqref{eq:p3_quadratic_switch} were never satisfied, RBNFW with \eqref{eq:global_inner_accuracy} would be used at every iteration.
    Then, \cref{theorem:global_linear,lemma:fw_gap_convergence} would give $g_k\to0$, contradicting the failure of the switching criterion.
    Therefore, $K_3$ is finite.

    If $g_j=0$ for some $j\geq K_3$, then the strong convexity of $f$ gives $x_j=x^\star$, so the sequence reaches the solution in finitely many iterations.
    Hence, it remains to consider the case $g_k>0$ for every $k\geq K_3$.
    Let $k\geq K_3$, define $e_k\coloneq x_k-x^\star$, and let $\widehat x_{k+1}$ be the exact minimizer of $\Psi_1(\cdot;x_k)$ over $\X$.
    By the optimality conditions of $\widehat x_{k+1}$ and $x^\star$,
    \begin{align*}
        \left\langle\nabla f(x_k)+\nabla^2f(x_k)(\widehat x_{k+1}-x_k),x^\star-\widehat x_{k+1}\right\rangle\geq0,
        \qquad
        \langle\nabla f(x^\star),\widehat x_{k+1}-x^\star\rangle\geq0.
    \end{align*}
    Therefore,
    \begin{align*}
        \|\widehat x_{k+1}-x^\star\|_{x_k}^2
        \leq\left\langle\nabla^2f(x_k)e_k-(\nabla f(x_k)-\nabla f(x^\star)),\widehat x_{k+1}-x^\star\right\rangle.
    \end{align*}
    By the Cauchy--Schwarz inequality,
    \begin{align*}
        \|\widehat x_{k+1}-x^\star\|_{x_k}
        \leq\|\nabla^2f(x_k)e_k-(\nabla f(x_k)-\nabla f(x^\star))\|_{x_k}^*.
    \end{align*}
    Using $\mu I\preceq\nabla^2f(x_k)$ and \eqref{ineq:p3_hessian_lipschitz}, we obtain
    \begin{align}
        \|\widehat x_{k+1}-x^\star\|
        &\leq\frac{1}{\sqrt{\mu}}\|\widehat x_{k+1}-x^\star\|_{x_k}\notag\\
        &\leq\frac{1}{\sqrt{\mu}}\|\nabla^2f(x_k)e_k-(\nabla f(x_k)-\nabla f(x^\star))\|_{x_k}^*\notag\\
        &\leq\frac{1}{\mu}\left\|\nabla^2f(x_k)e_k-(\nabla f(x_k)-\nabla f(x^\star))\right\|\notag\\
        &=\frac{1}{\mu}\left\|\int_0^1\left(\nabla^2f(x_k)-\nabla^2f(x^\star+te_k)\right)e_k\,dt\right\|\notag\\
        &\leq\frac{1}{\mu}\int_0^1\|\nabla^2f(x_k)-\nabla^2f(x^\star+te_k)\|_2\|e_k\|\,dt\notag\\
        &\leq\frac{M}{\mu}\|e_k\|^2\int_0^1(1-t)\,dt
        =\frac{M}{2\mu}\|e_k\|^2. \label{ineq:p3_exact_quadratic}
    \end{align}

    Using \eqref{ineq:p3_exact_quadratic}, $\|e_k\|\leq D/\sqrt{\mu}$, and the triangle inequality,
    \begin{align*}
        \|\widehat x_{k+1}-x_k\|
        \leq\|e_k\|+\|\widehat x_{k+1}-x^\star\|
        \leq\left(1+\frac{M}{2\mu}\|e_k\|\right)\|e_k\|
        \leq\left(1+\frac{MD}{2\mu^{3/2}}\right)\|e_k\|.
    \end{align*}
    Combining this inequality with \eqref{ineq:model_decrease_inner_error} with $a=4$ gives $\|x_{k+1}-\widehat x_{k+1}\|\leq\sqrt{\frac{2}{\mu}}\left(1+\frac{MD}{2\mu^{3/2}}\right)^2\|e_k\|^2$.
    Hence, by \eqref{ineq:p3_exact_quadratic} and the triangle inequality,
    \begin{align*}
        \|x_{k+1}-x^\star\|
        \leq\|x_{k+1}-\widehat x_{k+1}\|+\|\widehat x_{k+1}-x^\star\|
        \leq C_3\|e_k\|^2.
    \end{align*}
    This proves \eqref{ineq:p3_local_quadratic}.

    By \eqref{eq:p3_quadratic_switch}, strong convexity, and convexity,
    \begin{align*}
        \frac{\mu}{2}\|e_{K_3}\|^2\leq f(x_{K_3})-f(x^\star)\leq g_{K_3}\leq\frac{\mu}{8C_3^2}.
    \end{align*}
    Therefore, $C_3\|e_{K_3}\|\leq\frac12$.
    If the same inequality holds at iteration $k$, then \eqref{ineq:p3_local_quadratic} gives $\|e_{k+1}\|\leq\frac12\|e_k\|$ and $C_3\|e_{k+1}\|\leq\frac14$.
    Thus, by induction, $x_k\to x^\star$ and $\|e_{k+1}\|\leq\frac12\|e_k\|$ for every $k\geq K_3$.

    Since $\alpha_k=1$, the definition of $s_k$, \eqref{ineq:p3_hessian_lipschitz}, and the preceding contraction give
    \begin{align*}
        \|\nabla f(x_{k+1})+s_k\|_{x_{k+1}}^*
        &\leq\frac{1}{\sqrt{\mu}}\|\nabla f(x_{k+1})-\nabla f(x_k)-\nabla^2f(x_k)(x_{k+1}-x_k)\|\\
        &\leq\frac{M}{2\sqrt{\mu}}\|x_{k+1}-x_k\|^2\leq\frac{9M}{8\sqrt{\mu}}\|e_k\|^2.
    \end{align*}
    Moreover, \eqref{ineq:model_decrease_gap_bound} and the bound on $\|\widehat x_{k+1}-x_k\|$ established above imply $G_{\ell_k(4)}\leq\left(1+\frac{MD}{2\mu^{3/2}}\right)^4\|e_k\|^4$.
    Since $\nabla\Psi_1(x_{k+1};x_k)=-s_k$, we have $\max_{v\in\X}\langle s_k,v-x_{k+1}\rangle=G_{\ell_k(4)}$.
    Applying \cref{lemma:objective_error_residual} with $x=x_{k+1}$, $s=s_k$, and $\eta=G_{\ell_k(4)}$, and using the preceding bounds together with $\frac{\mu}{2}\|e_k\|^2\leq f(x_k)-f(x^\star)$, we obtain, for some constant $\widetilde C_3>0$ independent of $k$,
    \begin{align*}
        f(x_{k+1})-f(x^\star)
        \leq\frac{L}{2\mu}\|\nabla f(x_{k+1})+s_k\|_{x_{k+1}}^{*2}+G_{\ell_k(4)}
        \leq\widetilde C_3\left(f(x_k)-f(x^\star)\right)^2.
    \end{align*}
    This proves \eqref{ineq:p3_local_quadratic_objective} and completes the proof.
\end{proof}

\subsection{Total LMO Complexity}\label{appendix:total_lmo_complexity}
Finally, we summarize the total LMO complexity of our methods and several related methods.
For the global bounds, we use the global stepsize and inner accuracy rules throughout under the assumptions of \cref{theorem:global_linear}.
By \eqref{eq:global_inner_accuracy}, \eqref{ineq:error_bound_delta}, and $\Phi(x_k)\leq\Phi(x_0)$, we have $\eta_k\geq c\epsilon$ and $\alpha_{k,j}\geq\underline\alpha$ for all trials whenever $f(x_k)-f(x^\star)>\epsilon$, with $c,\underline\alpha>0$ independent of $\epsilon$.
Since $g_k$ is bounded and $|\widehat{\Scal}_k|\leq|\vertex\X|<\infty$ in the polytope case, \cref{proposition:lmo_complexity,proposition:lmo_complexity_away} give $\O(\epsilon^{-1})$ and $\O(\log\epsilon^{-1})$ LMO calls per inner solve, respectively.
For the accepted trial index $j_k$ in \cref{alg:residual_backtracking}, acceptance at the root value, $\Delta_k\geq\rho\Delta_{k-1}$, and the update $\bar\theta_{k+1}=\theta_k/\tau$ give $j_k\leq 2+\frac{q-2}{q-1}\log_\tau(\rho^{-1})+\log_\tau(\bar\theta_{k+1}/\bar\theta_k)$ for $k\geq1$.
Since $\bar\theta_k$ is bounded above and $j_0$ is bounded independently of $\epsilon$, telescoping gives $\O(1)$ inner solves per outer iteration on average; the root Newton rule uses one.
Combining these estimates with the $\O(\log\epsilon^{-1})$ outer iteration bound of \cref{theorem:global_linear} yields global total LMO complexities $\O(\epsilon^{-1}\log\epsilon^{-1})$ and $\O((\log\epsilon^{-1})^2)$ for \cref{alg:dnfw,alg:dnfw_away}, respectively.
These bounds hold with problem and algorithmic parameters fixed and include initialization, outer FW-gap evaluations, and rejected trials.

For the local bounds, we set $a=2(1+\nu)$ for $p=2$ and $a=4$ for $p=3$.
Writing $M_k^*\coloneq f(x_k)-\Psi_1(\widehat x_{k+1};x_k)$, comparison with the feasible point $x_k+(\mu/L)(x^\star-x_k)$ gives $M_k^*\geq(\mu/L)(f(x_k)-f(x^\star))$ by strong convexity and $\nabla^2f(x_k)\preceq LI$.
Define the analysis-only tolerance $\bar\eta_k\coloneq\min\left\{\frac{M_k^*}{2},\left(\frac{M_k^*}{2\|\nabla f(x_k)\|}\right)^a\right\}$.
Since $M_k^*-M_t\leq G_t$, the inequality $G_t\leq\bar\eta_k$ implies $M_t\geq M_k^*/2$ and hence the stopping condition \eqref{eq:model_decrease_stopping}.
Thus, the local inner loop terminates no later than the same inner method run with fixed tolerance $\bar\eta_k$.
Since $\nabla f$ is bounded on $\X$, there exists $c>0$, independent of $k$, such that $\bar\eta_k\geq c(f(x_k)-f(x^\star))^a$ whenever $0<f(x_k)-f(x^\star)\leq1$.
Thus, for $0<f(x_k)-f(x^\star)\leq 1/e$, \cref{proposition:lmo_complexity} gives $\O((f(x_k)-f(x^\star))^{-a})$ LMO calls per FW inner solve, while the contraction estimate in the proof of \cref{proposition:lmo_complexity_away} gives $\O(\log (f(x_k)-f(x^\star))^{-1})$ non-drop steps per AFW inner solve.
Finite termination has an $\epsilon$-independent cost, so assume that $f(x_k)-f(x^\star)>0$ for all $k$.
By \cref{theorem:local_convergence_p2,theorem:local_convergence_p3}, $f(x_k)-f(x^\star)\to0$ and $f(x_{k+1})-f(x^\star)\leq\widetilde C_p (f(x_k)-f(x^\star))^{a/2}$ after switching.
Fix $l\in(1,a/2)$.
There exists $k_0$ after switching, independent of $\epsilon$, such that $0<f(x_k)-f(x^\star)\leq 1/e$, $f(x_{k+1})-f(x^\star)\leq (f(x_k)-f(x^\star))/2$, and $\log (f(x_{k+1})-f(x^\star))^{-1}\geq l\log (f(x_k)-f(x^\star))^{-1}$ for all $k\geq k_0$.
For sufficiently small $\epsilon>0$, let $N\coloneq\min\{k\geq k_0\mid f(x_k)-f(x^\star)\leq\epsilon\}$.
Since $f(x_{N-1})-f(x^\star)>\epsilon$, geometric summation yields
\begin{align}
    \sum_{k=k_0}^{N-1}(f(x_k)-f(x^\star))^{-a}
    &\leq\frac{(f(x_{N-1})-f(x^\star))^{-a}}{1-2^{-a}}
    =\O(\epsilon^{-a}), \notag\\
    \sum_{k=k_0}^{N-1}\log (f(x_k)-f(x^\star))^{-1}
    &\leq\frac{l}{l-1}\log (f(x_{N-1})-f(x^\star))^{-1}
    =\O(\log\epsilon^{-1}). \notag
\end{align}
Since active sets are carried between solves, the total number of drop steps is at most the number of FW steps plus the active-set size at switching.
Hence, the local total LMO complexities, including outer FW-gap evaluations, are $\O(\epsilon^{-a})$ and $\O(\log\epsilon^{-1})$ for \cref{alg:dnfw,alg:dnfw_away}, respectively.
Problem and algorithmic parameters are fixed, including $\nu>0$ when $p=2$.
The finite LMO cost before $k_0$, including rejected trials, is independent of $\epsilon$ and absorbed in these bounds.

These bounds and those for related methods are summarized in \cref{tab:total_lmo_complexity}.
The local bounds are conservative worst-case guarantees based on sufficient inner accuracy conditions for fast local convergence.
In the experiments of \cref{sec:numerical-experiments}, our methods were competitive in LMO calls and often achieved the best wall-clock performance among the tested methods.

\begin{table}[!tbp]
    \caption{Comparison of global and local total LMO complexity for attaining $f(x_k)-f(x^\star)\leq\epsilon$. All inner LMO calls are included. For the proposed methods, global bounds use the global accuracy and stepsize rules throughout, including rejected trials, whereas local bounds count calls after switching. Our $\C^2$ bounds require $\nu>0$. Here, $\nu_{\mathrm N}\in(1,1.139)$. cvx: convex, scvx: strongly convex, self-conc.: self-concordant.}
    \label{tab:total_lmo_complexity}
    \centering
    \resizebox{\textwidth}{!}{%
    \begin{tabular}{cccccll}
        \toprule
        Algorithm & $f$ & $\nabla f$ Lip. & $\nabla^p f$ & $\X$ & global & local \\ \midrule
        FW & $\C^1$ cvx & \cmarkg & \xmarkr & cvx
        & $\O(\epsilon^{-1})$
        & --- \\
        CGS & $\C^1$ cvx & \cmarkg & \xmarkr & cvx
        & $\O(\epsilon^{-1})$
        & --- \\
        r-CGS & $\C^1$ scvx & \cmarkg & \xmarkr & cvx
        & $\O(\epsilon^{-1})$
        & --- \\
        NFW & $\C^3$ cvx & \xmarkr & self-conc. & cvx
        & ---
        & $\O(\epsilon^{-\nu_{\mathrm N}})$ \\
        SOCGS & $\C^2$ scvx & \cmarkg & $\nabla^2 f$ Lip. & polytope
        & $\O((\log\epsilon^{-1})^2)$
        & $\O(\log\epsilon^{-1}\log\log\epsilon^{-1})$ \\ \midrule
        \textbf{Alg.~\ref{alg:dnfw}} & $\C^2$ scvx & \cmarkg & $\nabla^2 f$ H\"older & cvx
        & $\O(\epsilon^{-1}\log\epsilon^{-1})$
        & $\O(\epsilon^{-2(1+\nu)})$ \\
        \textbf{Alg.~\ref{alg:dnfw_away}} & $\C^2$ scvx & \cmarkg & $\nabla^2 f$ H\"older & polytope
        & $\O((\log\epsilon^{-1})^2)$
        & $\O(\log\epsilon^{-1})$ \\
        \textbf{Alg.~\ref{alg:dnfw}} & $\C^3$ scvx & \cmarkg & $\nabla^3 f$ H\"older & cvx
        & $\O(\epsilon^{-1}\log\epsilon^{-1})$
        & $\O(\epsilon^{-4})$ \\
        \textbf{Alg.~\ref{alg:dnfw_away}} & $\C^3$ scvx & \cmarkg & $\nabla^3 f$ H\"older & polytope
        & $\O((\log\epsilon^{-1})^2)$
        & $\O(\log\epsilon^{-1})$ \\
        \bottomrule
    \end{tabular}}
\end{table}

\section{Additional Experimental Results}\label{appendix:additional_experiments}
\subsection{Matrix Sensing with Squared Loss}\label{appendix:matrix_sensing}

We generated the ground truth matrix $X_\star\in\M_{R_{\tr}}$, and formed the deterministic observations $(y_{\mathrm{obs}})_i=\langle A_i, X_\star \rangle$, $i=1,\ldots,m$.
We chose orthonormal matrices $V_1,\ldots,V_\varphi\in\S^n$ satisfying $\tr(V_j)=0$, and extended them to an orthonormal basis of $\S^n$ with $W_1,\ldots,W_{m-\varphi}$ chosen orthogonal to every $V_j$.
For the condition number $\Lambda$ of the objective function, we set $A_j=\sqrt{\lambda_j}V_j$ and $A_{\varphi+\ell}=W_\ell$, where $\lambda_j=\Lambda^{j/\varphi}$.
Rather than choosing their span at random, we constructed the $V_j$ so that the squared Frobenius norm of the projection of $X_0-X_\star$ onto their span equals $\varpi\|X_0-X_\star\|_{\F}^2$, where $\varpi\in(0,1)$ denotes the prescribed projection ratio.
This experiment was designed to verify conditioning and convergence behavior.

We present additional numerical results for matrix sensing with squared loss.
\cref{fig:matrix_sensing_align0p5,fig:matrix_sensing_align0p2} show the primal gap for $\varpi=0.5$ and $\varpi=0.2$, respectively.
Our methods outperformed the baseline methods in both cases.
In particular, RBNFW (\texttt{local3}) was the best-performing method in both cases.
APG obtained the smallest primal gap but required much more time than our methods.

\begin{figure}
    \centering
    \includegraphics[width=\textwidth]{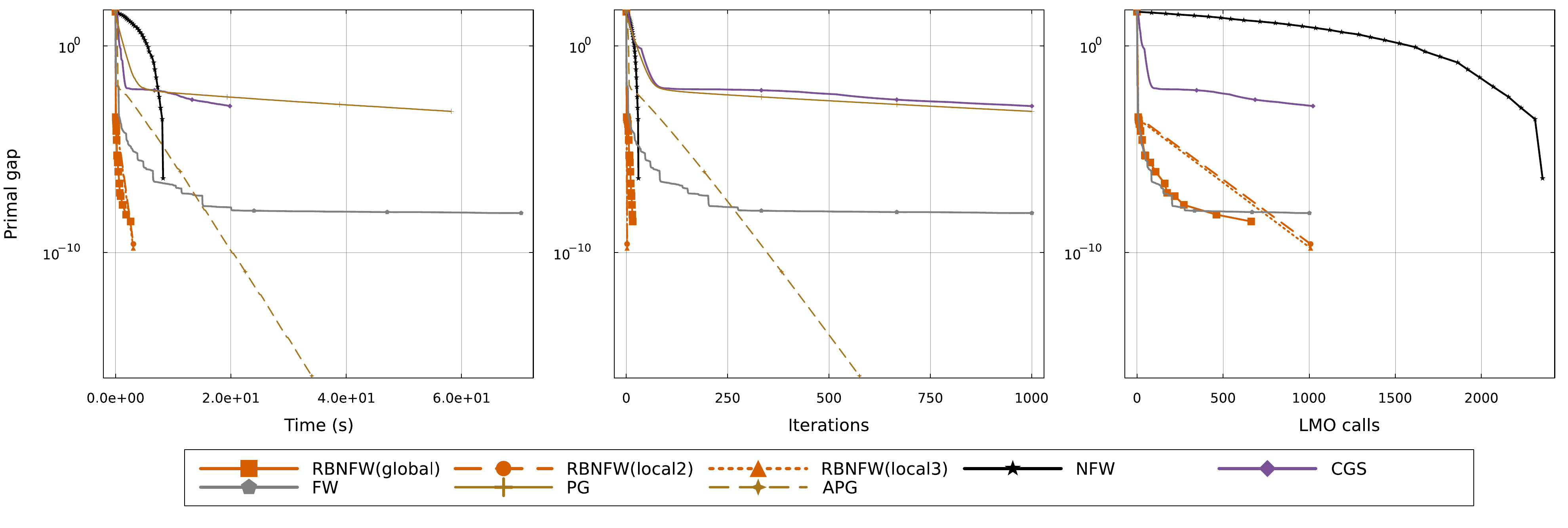}
    \caption{Primal gap for matrix sensing with $\varpi=0.5$.}
    \label{fig:matrix_sensing_align0p5}
    \smallskip
    \centering
    \includegraphics[width=\textwidth]{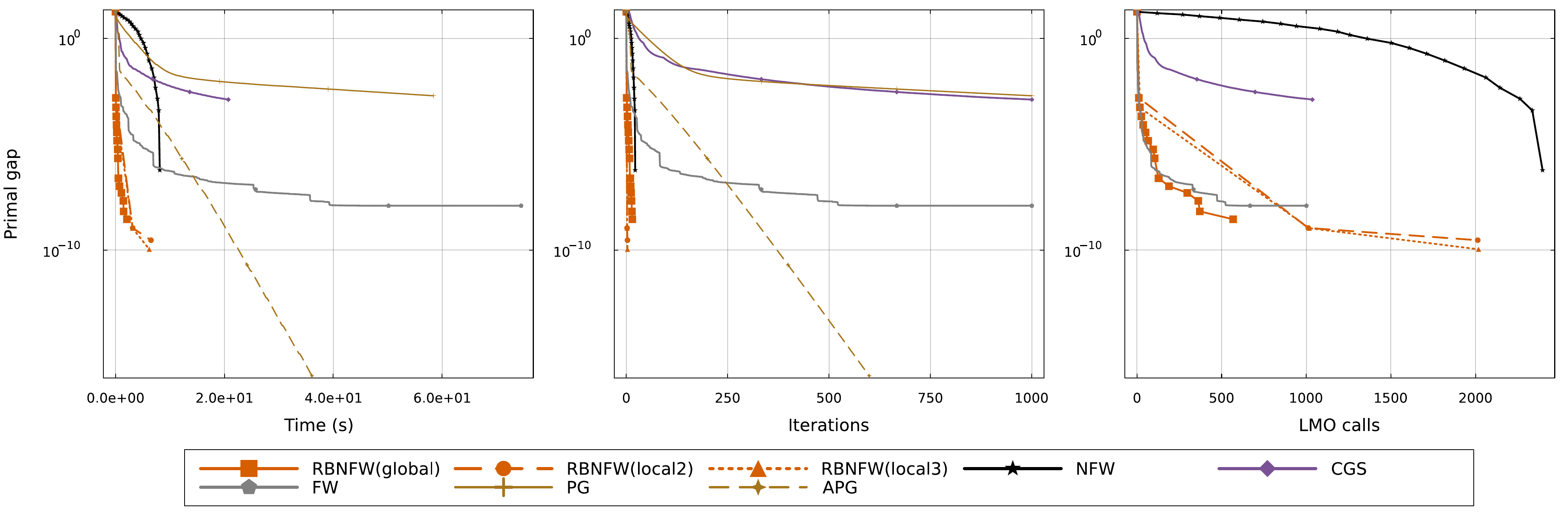}
    \caption{Primal gap for matrix sensing with $\varpi=0.2$.}
    \label{fig:matrix_sensing_align0p2}
\end{figure}

\subsection{Ridge-Regularized Logistic Regression}\label{appendix:ridge_logistic_regression}

We present additional numerical results for ridge-regularized logistic regression.
\cref{fig:logistic_a9a_l2_ball,fig:logistic_mushrooms_l2_ball,fig:logistic_mushrooms_polytope,fig:logistic_phishing_polytope,fig:logistic_w7a_polytope} show the primal gap for different datasets and constraint sets.
In \cref{fig:logistic_a9a_l2_ball,fig:logistic_mushrooms_l2_ball,fig:logistic_mushrooms_polytope}, the RBNFW variants outperformed the baseline methods.
In \cref{fig:logistic_phishing_polytope}, NFW also performed well.
In \cref{fig:logistic_w7a_polytope}, the performance of the RBNFW variants in terms of computation time was nearly identical to that of SOCGS.
Note that \cref{fig:logistic_mushrooms_polytope} does not include the results for away-step FW, and \cref{fig:logistic_phishing_polytope} does not include the results for CGS because they were very slow in these experiments.

\begin{figure}
    \centering
    \includegraphics[width=\textwidth]{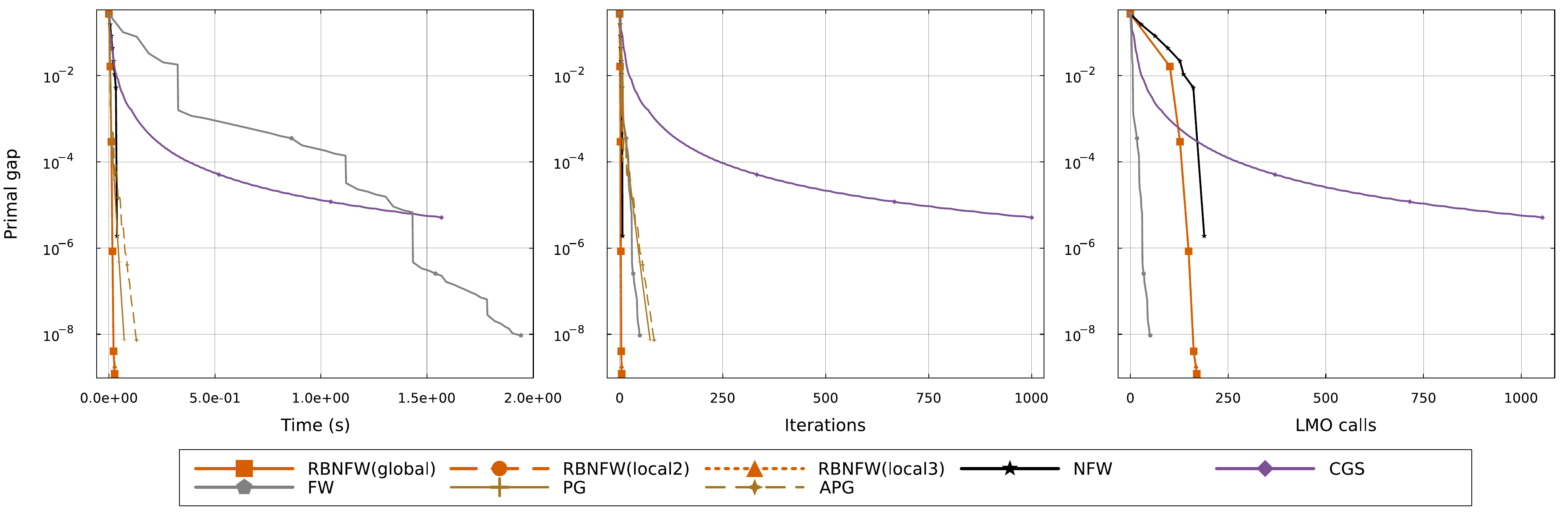}
    \caption{Primal gap for the a9a dataset with an $\ell_2$-ball constraint.}
    \label{fig:logistic_a9a_l2_ball}
    \smallskip
    \centering
    \includegraphics[width=\textwidth]{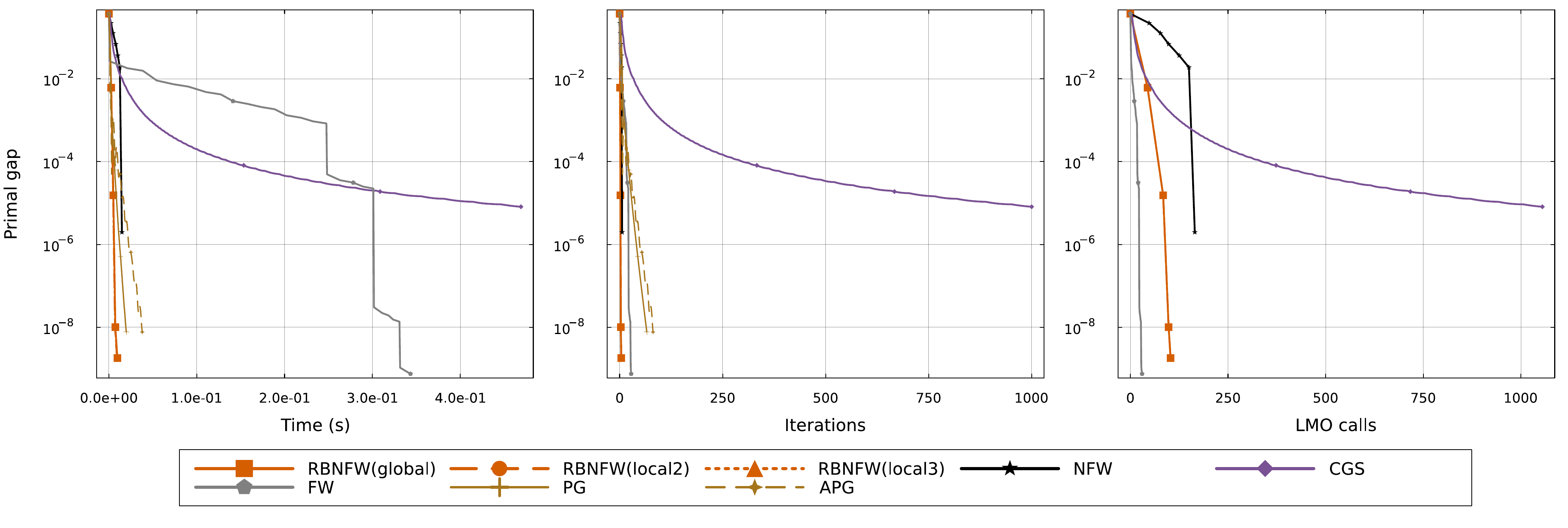}
    \caption{Primal gap for the mushrooms dataset with an $\ell_2$-ball constraint.}
    \label{fig:logistic_mushrooms_l2_ball}
\end{figure}

\begin{figure}
    \centering
    \includegraphics[width=\textwidth]{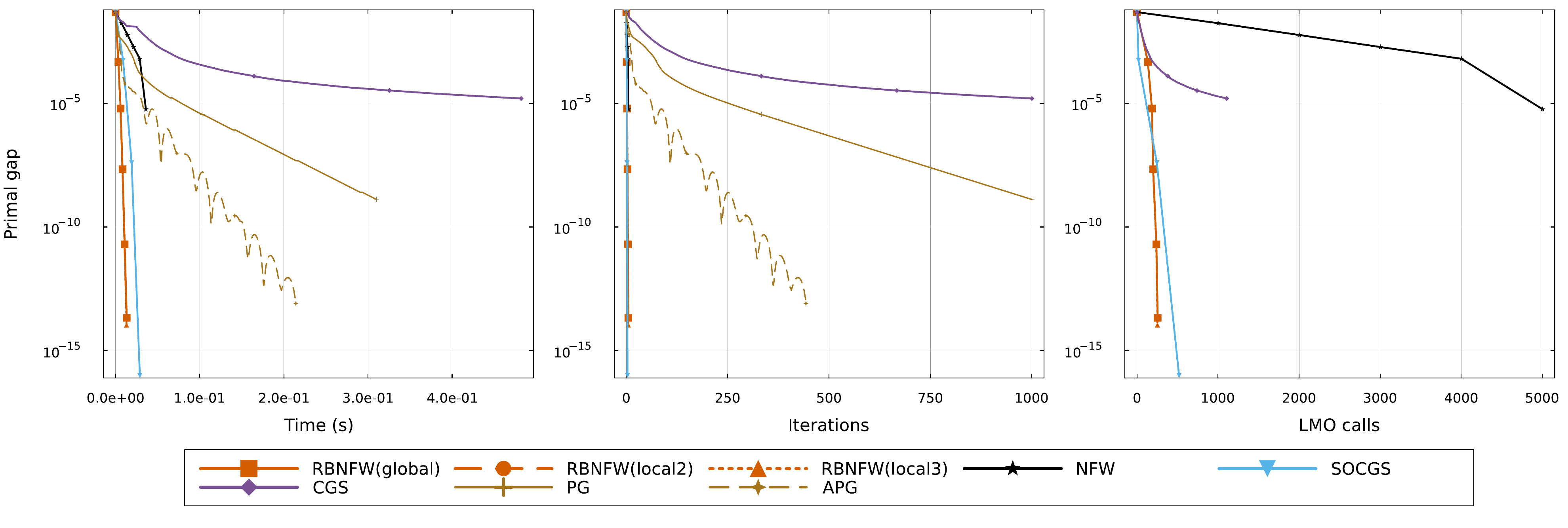}
    \caption{Primal gap for the mushrooms dataset with a sparse polytope constraint.}
    \label{fig:logistic_mushrooms_polytope}
    \smallskip
    \centering
    \includegraphics[width=\textwidth]{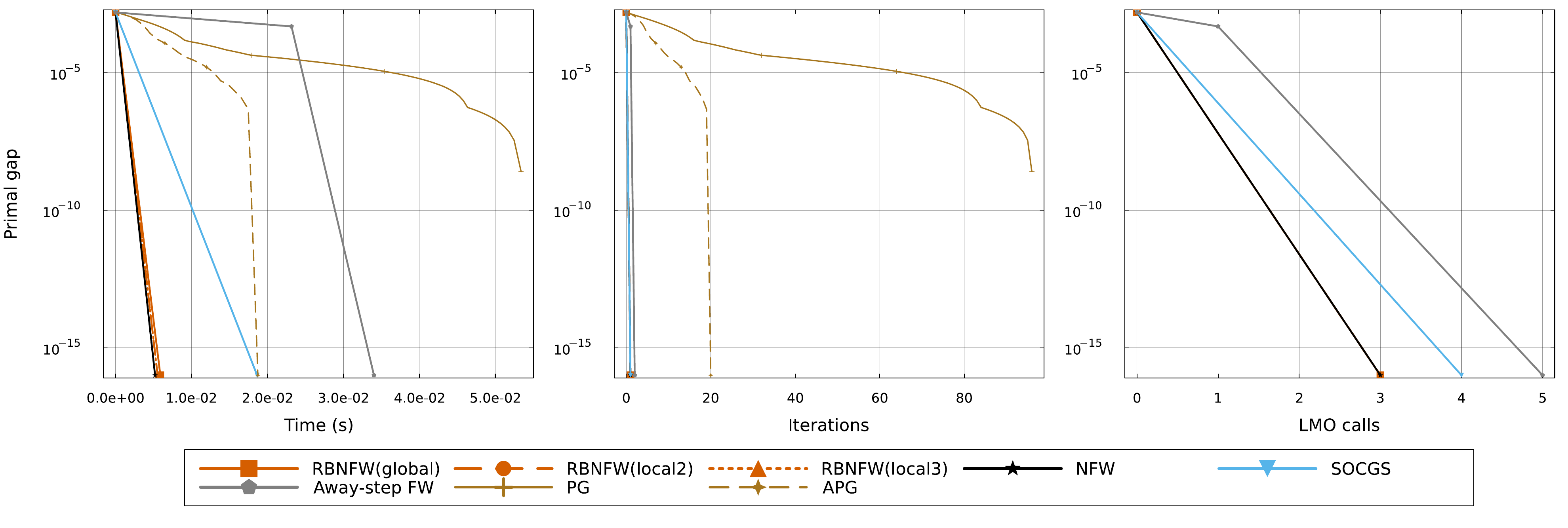}
    \caption{Primal gap for the phishing dataset with a sparse polytope constraint.}
    \label{fig:logistic_phishing_polytope}
    \smallskip
    \centering
    \includegraphics[width=\textwidth]{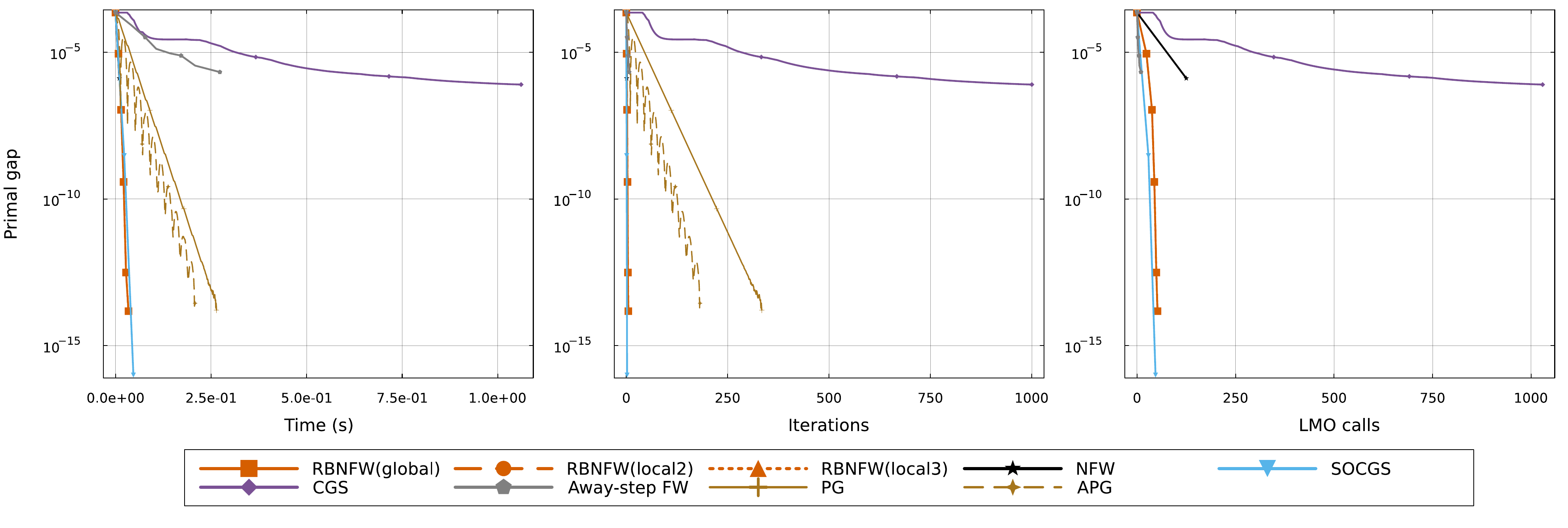}
    \caption{Primal gap for the w7a dataset with a sparse polytope constraint.}
    \label{fig:logistic_w7a_polytope}
\end{figure}

\subsection{Outer Iterations Reaching the Inner Iteration Limit}\label{appendix:inner_iterations_reach}
We capped inner solves at 1000 iterations, as in \citet[Section~4]{Carderera2026-pk}.
We report the affected outer iterations in \cref{tab:inner_iterations_reach}.
When the cap was reached before the prescribed accuracy was attained, we accepted the last inner iterate; this relaxation is not covered by our convergence analysis.
The prescribed accuracy depends on global problem constants and can be overly stringent in practice. 
RBNFW exhibited convergent behavior in the reported experiments and remained consistently stable, supporting its use in practice.

\begin{table}[!pht]
  \centering
  \caption{
    Number of outer iterations in which at least one inner solve reached
    the maximum of 1000 inner iterations.
    Multiple capped inner solves within the same outer iteration are counted
    only once.
    An entry ``--'' indicates that the corresponding method was not included
    in the corresponding experiment.
  }
  \label{tab:inner_iterations_reach}
  \resizebox{\textwidth}{!}{%
    \begin{tabular}{lrrrrrrrrrrr}
      \toprule
      & \multicolumn{3}{c}{Matrix sensing}
      & \multicolumn{3}{c}{Logistic: \(\ell_2\)-ball}
      & \multicolumn{5}{c}{Logistic: sparse polytope} \\
      \cmidrule(lr){2-4}
      \cmidrule(lr){5-7}
      \cmidrule(lr){8-12}
      Method
      & \(\varpi=0.2\)
      & \(\varpi=0.5\)
      & \(\varpi=0.8\)
      & covtype.binary
      & a9a
      & mushrooms
      & covtype.binary
      & a9a
      & mushrooms
      & phishing
      & w7a \\
      \midrule
      RBNFW (\texttt{global}) & 0 & 0 & 0 & 0 & 0 & 0 & 0 & 0 & 0 & 0 & 0 \\
      RBNFW (\texttt{local2}) & 2 & 1 & 0 & 0 & 0 & 0 & 0 & 0 & 0 & 0 & 0 \\
      RBNFW (\texttt{local3}) & 2 & 1 & 1 & 0 & 0 & 0 & 0 & 0 & 0 & 0 & 0 \\
      \midrule
      NFW            & 0 & 0 & 0 & 0 & 0 & 0 & 3 & 7 & 6 & 0 & 0 \\
      SOCGS          & -- & -- & -- & -- & -- & -- & 1 & 2 & 0 & 0 & 0 \\
      CGS            & 0 & 0 & 0 & 0 & 0 & 0 & 0 & 0 & 0 & -- & 0 \\
      \bottomrule
    \end{tabular}%
  }
\end{table}

\end{document}